\newtheorem{theorem}{Theorem}[section]
\newtheorem{lemma}[theorem]{Lemma}
\newtheorem{proposition}[theorem]{Proposition}
\newtheorem{corollary}[theorem]{Corollary}
\newtheorem{remark}[theorem]{Remark}
\numberwithin{equation}{section}
\newcommand{\R}{\mathbb{R}}
\newcommand{\Q}{\mathbb{Q}}
\newcommand{\C}{\mathbb{C}}
\renewcommand{\P}{\mathbb{P}}
\renewcommand{\d}{\partial}
\newcommand{\dbar}{\overline{\partial}}
\DeclareMathOperator{\Ric}{Ric}
\DeclareMathOperator{\Rm}{Rm}
\DeclareMathOperator{\tr}{tr}
\DeclareMathOperator{\Kah}{Kah}
\DeclareMathOperator{\Kod}{Kod}
\DeclareMathOperator{\Vol}{Vol}
\DeclareMathOperator{\Real}{Re}
\newcommand{\oo}[1]{\overline{#1}}
\newcommand{\parabolic}[1]{\left(\frac{\d}{\d t} - \Delta_{#1}\right)}
\newcommand{\norm}[1]{\left\|#1\right\|}
\newcommand{\ip}[1]{\left\langle#1\right\rangle}
\newcommand{\comment}[1]{}
\title{Global Ricci Curvature Behaviour for the K\"ahler-Ricci Flow with Finite Time Singularities}
\author{Alexander Bednarek\\The University of Sydney\\\texttt{alexander.bednarek@sydney.edu.au}}
\date{\today}
\begin{document}

\maketitle

\begin{abstract}
\noindent We consider the K\"ahler-Ricci flow $(X, \omega(t))_{t \in [0,T)}$ on a compact manifold where the time of singularity, $T$, is finite. We assume the existence of a holomorphic map from the K\"ahler manifold $X$ to some analytic variety $Y$ which admits a K\"ahler metric on a neighbourhood of the image of $X$ and that the pullback of this metric yields the limiting cohomology class along the flow. This is satisfied, for instance, by the assumption that the initial cohomology class is rational, i.e., $[\omega_0] \in H^{1,1}(X,\Q)$. Under these assumptions we prove an $L^4$-like estimate on the behaviour of the Ricci curvature and that the Riemannian curvature is Type $I$ in the $L^2$-sense.
\end{abstract}

\section{Introduction}

Suppose $M$ is a compact real manifold. Recall that a Ricci flow $(M, g(t))_{t\in [0,T)}$ with finite time singularity $T < \infty$ is said to be of Type I, \cite{CK04}, if there exists a $C > 0$ such that
\begin{equation*}
    \sup_{M\times[0,T)} (T-t)|\Rm| \leq C.
\end{equation*}
This is conjectured to be the general setting for K\"ahler manifolds at least in the sense of scalar curvature, \cite[Conj 1.1]{JST23}, and we attempt to provide some general results in this direction.\\
\indent The Ricci flow was introduced by Hamilton in $1982$ in a program to study the geometric and topological structures of $3$-manifolds, which lead to a proof of Thurston's Geometrization conjecture and the subsequent Poincare conjecture by Perelman in $2003$, \cite{P02, P031, P032}. The K\"ahler-Ricci flow, i.e., the Ricci flow in the setting of K\"ahler manifolds, has been extensively studied since it's first appearance in Cao's alternate proof of the Calabi-Yau conjecture, \cite{C85}, a result which provided ground-breaking progress in the prescribed Ricci curvature problem, \cite{Y78, T00}. The K\"ahler-Ricci flow has since been viewed as a powerful tool to study the geometric structure of K\"ahler manifolds and this has been accentuated by the Analytic Minimal Model Program proposed over a decade ago by Song and Tian, \cite{ST07}, a geometric-analytic version of Mori's Minimal Model Program for the birational classification of algebraic varieties.\\
\indent The K\"ahler-Ricci flow is defined as follows. Suppose $(X, \omega_0)$ is a compact K\"ahler manifold. A family of K\"ahler metrics $\{\omega(t): t \in [0,T)\}$ on $X$ satisfies the (normalized) K\"ahler-Ricci flow if
\begin{equation}\label{KRF}\tag{KRF}
    \frac{\d \omega(t)}{\d t} = -\Ric(\omega(t)) - \omega(t), \quad \omega(0) = \omega_0.
\end{equation}
The normalization gives nice cohomological behaviour to the flow.\\ \indent Let us discuss some recent progress in understanding curvature behaviour along the K\"ahler-Ricci flow. For flows with finite time singularities, the pair of recent papers by Hallgren, Jian, Song and Tian, \cite{JST23, HJST23}, established a local Type I estimate near Ricci vertices for the scalar curvature along the K\"ahler-Ricci flow under the assumption that the initial K\"ahler class is rational, i.e., $[\omega_0] \in H^{1,1}(X,\Q)$. Additionally, Zhang, \cite{Z10}, proves that under the assumption of a map $f:X\to Y$ from the K\"ahler manifold $X$ to some analytic variety $Y$, if there exists a smooth K\"ahler metric $\omega_Y$ near the image of $X$ satisfying $[f^*\omega_Y] = [\omega(T)]$ then the scalar curvature is uniformly bounded above by $\frac{C}{(T-t)^2}$.\\
\indent On the other hand, when the flow exists for all time, under the assumption that the canonical line bundle $K_X$ is semi-ample, the behaviour of the scalar curvature has been extensively studied and is well understood, with some results about the Ricci curvature. Song and Tian have proven that the scalar curvature is uniformly bounded in this case, \cite{ST16}, and Jian demonstrated that the scalar curvature converges to the negative of the Kodaira dimension on the regular part of the induced Calabi-Yau fibration, \cite{J20}. Most recently, Hein, Lee and Tosatti have shown that for intermediate Kodaira dimension, i.e., $0 < \Kod(X) < \dim X$, the Ricci curvature is uniformly bounded away from the singular fibres of the fibration, \cite{HLT24}.\\
\indent There is also some global control of the Ricci curvature. Tian and Zhang, \cite{TZ15}, prove that for compact K\"ahler manifolds with semi-ample canonical bundle and positive Kodaira dimension there exists a constant $C > 0$ such that for any $\tau \in [0, \infty)$
\begin{equation*}
    \int_{\tau}^{\tau+1} \int_X |\Ric(\omega(t))|^4 \omega(t)^n dt \leq C.
\end{equation*}
We also note that for the Ricci flow of a $4$-dimensional, real, compact manifold $M$ with finite time singularity, Simon, \cite{S15}, has proven the following $L^4$-like bound on the Ricci curvature under the assumption that the scalar curvature is uniformly bounded
\begin{equation*}
    \int_0^T \int_M |\Ric(g(t))|^4 dV_{g(t)} dt \leq C.
\end{equation*}
However, in the K\"ahler-Ricci flow with finite time singularity, the scalar curvature cannot be uniformly bounded and must blow-up towards positive infinity, \cite{Z10}. Simon presents a more general theorem without this assumption on scalar curvature which one can use to derive equation (\ref{alternate_Ricci_bound}) in the complex dimension $2$ case.\\
\indent We consider the K\"ahler-Ricci flow with finite time singularities in a geometric setting which mirrors the semi-ample case and use Zhang's $C^0$-estimates in \cite{Z10} in combination with methods by Tian and Zhang from \cite{TZ15} and \cite{TZ16} to deduce the following result.

\begin{theorem}\label{main_theorem}
Suppose $(X, \omega_0)$ is a compact K\"ahler manifold and $(X,\omega(t))_{t\in[0,T)}$ is a solution to (\ref{KRF}) or (\ref{UKRF}) where $T < \infty$. Suppose there exists a map $f: X \to Y$ where $Y$ is an analytic variety and there exists a smooth K\"ahler metric $\omega_Y$ in a neighbourhood of the image $f(X)$ satisfying $[f^*\omega_Y] = [\omega(T)]$. Then there exists a constant $C = C(\omega_0,f^*\omega_Y,T) > 0$ such that
\begin{equation*}
    \int_0^T \int_X (T-t)^8|\Ric(\omega(t))|^4_{\omega(t)} \omega(t)^n dt \leq C,
\end{equation*}
and
\begin{equation*}
    \int_X |\Rm(\omega(t))|^2_{\omega(t)} \omega(t)^n  \leq \frac{C}{(T-t)^2},
\end{equation*}
and
\begin{equation*}
    \int_0^T \int_X (T-t)^8 |\nabla \Ric|_{\omega(t)}^2 \omega(t)^n dt \leq C,
\end{equation*}
and
\begin{equation*}
    \int_0^T \int_X (T-t)^6 |\nabla R|_{\omega(t)}^2 \omega(t)^n dt \leq C. 
\end{equation*}
\end{theorem}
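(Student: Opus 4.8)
The plan is to reduce each of the four statements to a weighted integral estimate, feeding on Zhang's zeroth‑order bounds from \cite{Z10} and the scalar curvature bound $R\le C(T-t)^{-2}$, and then to close the two $L^4$/derivative statements against one another by a Gr\"onwall argument in the spirit of \cite{TZ15, TZ16} (I write everything for the unnormalized flow; (\ref{KRF}) reduces to it up to harmless lower‑order terms). First I would fix the K\"ahler reference family $\hat\omega_t:=\tfrac{T-t}{T}\omega_0+\tfrac{t}{T}f^*\omega_Y$ (positive on $[0,T)$, with $[\hat\omega_t]=[\omega(t)]$), write $\omega(t)=\hat\omega_t+i\partial\bar\partial\varphi_t$, and record the parabolic complex Monge--Amp\`ere equation $\dot\varphi=\log\big((\hat\omega_t+i\partial\bar\partial\varphi)^n/\Omega\big)$ for a fixed smooth volume form $\Omega$ with $\Ric(\Omega)=-\dot{\hat\omega}_t=:-\chi$, so that $\Ric(\omega(t))=-\chi-i\partial\bar\partial\dot\varphi$ and $\ddot\varphi=-R$. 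From \cite{Z10} one has $\|\varphi_t\|_{L^\infty}\le C$, $\dot\varphi\le C$, and the second‑order bound $\tr_{\omega(t)}\omega_0\le C(T-t)^{-2}$; with the maximum principle on $\partial_t R=\Delta R+|\Ric|^2$ this gives $-C\le R\le C(T-t)^{-2}$. Since $\int_X R\,\omega^n=2\pi n\,c_1(X)\cdot[\omega(t)]^{n-1}$ and $\Vol(X,\omega(t))$ are polynomial in $t$, hence bounded, $\int_X|R|\,\omega^n\le C$, and therefore $\int_X R^2\omega^n\le(\sup_X|R|)\int_X|R|\,\omega^n\le C(T-t)^{-2}$.

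For the second estimate I would exploit the Chern--Weil structure of the flow. Writing $\rho=\Ric(\omega(t))$ and $\Theta$ for the Chern curvature, the pointwise identities $\rho\wedge\rho\wedge\omega^{n-2}=\tfrac{1}{n(n-1)}(R^2-|\Ric|^2)\omega^n$ and $\tr(\Theta\wedge\Theta)\wedge\omega^{n-2}=c\,(|\Ric|^2-|\Rm|^2)\omega^n$ with $c\ne0$, together with $8\pi^2 c_2(\omega)=\tr(\Theta\wedge\Theta)+\rho\wedge\rho$ and the fact that $c_1(X)^2\cdot[\omega(t)]^{n-2}$ and $c_2(X)\cdot[\omega(t)]^{n-2}$ are polynomial in $t$, first give $\int_X|\Ric|^2\omega^n=\int_X R^2\omega^n+O(1)\le C(T-t)^{-2}$ and then $\int_X|\Rm|^2\omega^n\le C\big(1+\int_X(R^2+|\Ric|^2)\omega^n\big)\le C(T-t)^{-2}$.

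Next I would obtain the derivative estimates. For the scalar gradient, differentiating $\int_X R^2\omega^n$ along the flow yields $\tfrac{d}{dt}\int_X R^2\omega^n=-2\int_X|\nabla R|^2\omega^n+\int_X(2R|\Ric|^2-R^3)\omega^n$, whose reaction term is $\le C(T-t)^{-4}$ by the bounds above; multiplying by $(T-t)^6$, integrating on $[0,T)$ and using $(T-t)^6\int_X R^2\omega^n\to0$ gives the fourth estimate. For the $L^4$ Ricci and $\nabla\Ric$ estimates I would work from $\Ric=-\chi-i\partial\bar\partial\dot\varphi$: the $\chi$‑part is harmless because $|\chi|_{\omega(t)}\le C\tr_{\omega(t)}\omega_0\le C(T-t)^{-2}$, while for the Hessian part I would use the K\"ahler identity $\int_X|i\partial\bar\partial h|^2\omega^n=\int_X(\Delta_\omega h)^2\omega^n$ with $h=\dot\varphi$ (so $\Delta_\omega\dot\varphi=-R-\tr_\omega\chi$, giving $|\Delta_\omega\dot\varphi|\le C(T-t)^{-2}$ and $\int_X(\Delta_\omega\dot\varphi)^2\omega^n\le C(T-t)^{-2}$) plus one integration by parts, bounding $\int_X|i\partial\bar\partial\dot\varphi|^4\omega^n$ by $C(T-t)^{-6}$ plus terms controlled through $\int_X|\nabla i\partial\bar\partial\dot\varphi|^2\omega^n$ and a parabolic gradient estimate on $\dot\varphi$. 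Simultaneously I would integrate the evolution inequality $(\partial_t-\Delta)|\Ric|^2\le-|\nabla\Ric|^2-|\bar\nabla\Ric|^2+C|\Rm|\,|\Ric|^2$ against $(T-t)^8\omega^n$ and estimate $\int_0^T\!\!\int_X(T-t)^8|\Rm||\Ric|^2\omega^n\,dt\le C\int_0^T(T-t)^3\big((T-t)^8\!\int_X|\Ric|^4\omega^n\big)^{1/2}dt$ by Cauchy--Schwarz and the $L^2$ bound on $\Rm$, thereby controlling $\int_0^T\!\!\int_X(T-t)^8|\nabla\Ric|^2\omega^n\,dt$ in terms of $\int_0^T\!\!\int_X(T-t)^8|\Ric|^4\omega^n\,dt$; since $|\nabla i\partial\bar\partial\dot\varphi|\le|\nabla\Ric|+|\nabla\chi|$, the two families of inequalities feed into each other, and Gr\"onwall's inequality — with the exponent $8$ exactly what makes all time integrals converge — yields the first and third estimates.

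The hard part will be this last, coupled step. The reaction term $|\Rm|\,|\Ric|^2$ in the evolution of $|\Ric|^2$ admits no pointwise bound (that would be the Type~$I$ conjecture), so one is forced to trade it against the $L^2$ bound on $\Rm$, which brings $\int_X|\Ric|^4\omega^n$ back into play; making the $L^4$ Ricci bound, the $L^2$ bound on $\nabla\Ric$, and the auxiliary $L^2$ bound on $\nabla i\partial\bar\partial\dot\varphi$ close simultaneously, with every power of $(T-t)^{-1}$ coming from the collapsing of $\omega(t)$ tracked precisely, is the delicate point, and is the reason the weights in the statement are as large as they are.
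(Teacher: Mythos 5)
Your treatment of the second and fourth estimates is essentially sound. The Chern--Weil argument reducing $\int_X|\Rm|^2$ to $\int_X R^2$ plus a topological term, combined with $\int_X R^2\le(\sup R)\int_X|R|\le C(T-t)^{-2}$, is exactly the paper's proof of the $\Rm$ bound. For the $\nabla R$ bound you take a genuinely different and arguably cleaner route: integrating $\tfrac{d}{dt}\int_X R^2\omega^n=-2\int_X|\nabla R|^2+\int_X(2R|\Ric|^2-R^3)$ against the weight $(T-t)^6$, with the reaction term controlled by $C(T-t)^{-4}$, does close (the paper instead derives it from the decomposition $(1-e^{t-T})\nabla R=-\nabla\tr_\omega(f^*\omega_Y)-\nabla\Delta u$ and separate estimates on each piece). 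The first and third estimates, however --- the heart of the theorem --- contain two genuine gaps.

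First, you work with $h=\dot\varphi$ and the reference family $\hat\omega_t=\tfrac{T-t}{T}\omega_0+\tfrac{t}{T}f^*\omega_Y$, so your fixed form is $\tfrac1T(f^*\omega_Y-\omega_0)$ and your error terms involve $\tr_{\omega(t)}\omega_0$ and $|\nabla\dot\varphi|$. Neither of the inputs you invoke is available: Zhang's estimates give $\tr_{\omega(t)}(f^*\omega_Y)\le C$ but say nothing about $\tr_{\omega(t)}\omega_0$ (the claimed bound $\tr_{\omega(t)}\omega_0\le C(T-t)^{-2}$ has no source --- the Schwarz-lemma argument fails for $\omega_0$ because $[\omega_0]$ is not the limiting class), and the uniform gradient bound holds for the \emph{selected} Ricci potential $u=(1-e^{t-T})\dot\varphi+\varphi$, not for $\dot\varphi$ itself. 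The whole point of the decomposition $(1-e^{t-T})\Ric=e^{t-T}\omega-f^*\omega_Y-i\d\dbar u$ is that the only background form appearing is $f^*\omega_Y$, whose trace is bounded, and the only potential appearing is $u$, with $|u|,|\nabla u|\le C$ and $|\Delta u|\le C(T-t)^{-1}$. Second, your proposed Gr\"onwall closure of the coupled system ($L^4$ of $\Ric$ versus $L^2$ of $\nabla\Ric$) is circular as described: the $|\Rm|\,|\Ric|^2$ term is estimated by the very quantity $\int_0^T\int_X(T-t)^8|\Ric|^4$ you are trying to prove, and you never exhibit an independent a priori input or a differential inequality of Gr\"onwall type with integrable coefficient. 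The paper breaks this circle by first proving $\int_0^T\int_X(T-t)^4|\nabla\Delta u|^2\,\omega^n\,dt\le C$ directly from the evolution equation of $(\Delta u)^2$ (using only the $C^0$ bounds), then feeding this into the Tian--Zhang integral Bochner and commutator estimates --- where the term $\int_X|\nabla u|^2|\nabla\nabla\dbar\nabla u|^2$ is absorbed using $|\nabla u|\le C$ --- to get the $L^4$ bound on $\nabla\dbar\nabla u$ and hence on $\Ric$; only \emph{afterwards} does the $\nabla\Ric$ bound follow from the evolution of $|\Ric|^2$ via the pointwise inequality $|\Rm|\,|\Ric|^2\le C|\Rm|^2+|\Ric|^4$, with both right-hand terms already known to be integrable against $(T-t)^8$. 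Without identifying this independent starting point (or an equivalent one), your scheme does not close.
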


\indent The contents of this paper are summarized as follows. In section $2$ an explanation of the geometric and cohomological assumptions is given together with a brief summary of global $C^0$-estimates which are essential in the computations which ensue. Next, in section $3$, we prove Theorem \ref{main_theorem} and provide many estimates on the Ricci potential. Section $4$ improves the $L^4$-like estimate on the behaviour of the Ricci curvature along the flow to a genuine $L^4$-estimate, (\ref{actual_L4_Ricci_bound}), which is dependent on the behaviour of the Laplacian of $\tr_{\omega(t)}(f^*\omega_Y)$. Finally, section $5$ provides a conversion of the $L^4$-like estimate on the Ricci curvature for the normalized K\"ahler-Ricci flow to one for the unnormalized flow and an appendix is included for the calculation of some $4$-th order commutation relations of covariant derivatives.\\

\noindent \textbf{Acknowledgements.} \textit{The author is extremely grateful to his supervisor Zhou Zhang for all the help in the writing of this paper. Particularly, for the reminder about the selected Ricci potential and its $C^0$-estimates. Additionally, the structure of this paper was greatly improved thanks to his suggestions. The author thanks Gang Tian for his interest and Zhenlei Zhang for discussion about \cite{TZ15, TZ16} and his kind words. Hosea Wondo also needs recognition for the many insightful discussions during the paper's conception.}

\section{The Geometric Setting and $C^0$-Behaviour}

Let $(X, \omega_0)$ be a compact K\"ahler manifold of dimension $\dim_\C(X) = n$. We consider the following normalized K\"ahler-Ricci flow
\begin{equation*}
    \frac{\d \omega(t)}{\d t} = -\Ric(\omega(t)) - \omega(t), \quad \omega(0) = \omega_0.
\end{equation*}
It is well known that the flow exists as long the the evolving cohomology class $[\omega(t)] = e^{-t}[\omega_0] + (1-e^{-t})[K_X]$ is positive, \cite{TZ06}, and the time of singularity, i.e. maximal existence time, is given by
\begin{equation*}
    T = \sup\{t\in (0,\infty]: e^{-t}[\omega_0] + (1-e^{-t})[K_X] \in \Kah(X)\},
\end{equation*}
where $\Kah(X)$ denotes the K\"ahler cone of $X$. We shall assume $T < \infty$ which corresponds with the case where the canonical line bundle $K_X$ is not nef.\\
Let us also assume that there exists a holomorphic map $f: X \to Y$, where $Y$ is an analytic variety that is smooth in a neighbourhood of the image $f(X)$, and that there exists a K\"ahler metric $\omega_Y$ on this neighbourhood such that $[f^*\omega_Y] = [\omega(T)]$. Note that although $\omega(T)$ is undefined, the limiting class $[\omega(T)]$ is still defined by $[\omega(T)] := e^{-T}[\omega_0] + (1-e^{-T})[K_X]$.\\
Under these assumptions we can choose a closed, smooth $(1,1)$-form $\chi$ representing $[K_X]$ such that 
\begin{equation*}
    f^* \omega_Y = e^{-T} \omega_0 + (1-e^{-T}) \chi
\end{equation*}
and a smooth volume form $\Omega$ such that $\chi = -\Ric(\Omega) = i\d\dbar \log \Omega$. By a standard reduction argument involving the $\d\dbar$-lemma, e.g. see \cite{Y78, TZ06}, the K\"ahler-Ricci flow (\ref{KRF}) is equivalent to the following complex Monge-Ampere type equation
\begin{equation}\label{nSKRF}\tag{SKRF}
    \frac{\d \varphi}{\d t} = \log \frac{(e^{-t}\omega_0 + (1-e^{-t})\chi + i \d\dbar \varphi)^n}{\Omega} - \varphi, \quad \varphi(0) = 0
\end{equation}
where $\omega(t) = e^{-t}\omega_0 + (1-e^{-t})\chi + i\d\dbar \varphi(t)$.

\begin{remark}
This situation is typically achieved by the assumption that $[\omega_0]$ is rational, i.e. $[\omega_0] \in H^{1,1}(X,\Q)$, so that by Kawamata's rationality and base-point free theorems we know $[\omega(T)] \in H^{1,1}(X,\Q)$ and we have an induced holomorphic map $f: X \to Y \subseteq \C\P^N$ satisfying $[f^* \omega_Y] = [\omega(T)]$, where $\omega_Y$ is now a multiple of the restriction of the Fubini-Study metric on $\C\P^N$ to $Y$, \cite{Z10}.
\end{remark}

\noindent Also, it is well-known that there exists a constant $C > 0$ such that $\omega(t)^n \leq C \Omega$. This means the volume is bounded along the flow, i.e.,
\begin{equation*}
    \Vol_{\omega(t)}(X) = \int_X \omega(t)^n \leq C.
\end{equation*}
\noindent A quick calculation, \cite{Z10}, shows that
\begin{equation}\label{Ricci_formula}
    (1-e^{t-T})\Ric(\omega(t)) = e^{t-T} \omega(t) - f^* \omega_Y -i\d\dbar u(t),
\end{equation}
where $u = (1-e^{t-T})\frac{\d \varphi}{\d t} + \varphi$, and for the scalar curvature,
\begin{equation}\label{scalar_formula}
    (1-e^{t-T}) R = ne^{t-T} - \tr_{\omega(t)} (f^*\omega_Y) - \Delta u.
\end{equation}
For this reason, i.e. equation (\ref{Ricci_formula}), we shall call $u$ the \textbf{Ricci potential}. One can observe that letting $T \to \infty$ yields the same Ricci potential and decompositions, (\ref{Ricci_formula}), (\ref{scalar_formula}), of the Ricci and scalar curvatures in terms of the Ricci potential as in the semi-ample case. Additionally, one can calculate the following evolution equations, \cite{Z10},
\begin{align*}
    &\parabolic{} u = -n + \tr_{\omega(t)}(f^* \omega_Y),\\
    &\parabolic{} |\nabla u|^2 = |\nabla u|^2 - |\nabla \nabla u|^2 - |\nabla \oo{\nabla} u|^2 + 2 \Real\ip{\nabla \tr_{\omega(t)}(f^*\omega_Y), \oo{\nabla} u},\\
    &\parabolic{} \Delta u = \Delta u + \ip{\Ric, i\d\dbar u} + \Delta \tr_{\omega(t)}(f^*\omega_Y).
\end{align*}
This follows the setup in \cite{Z10} and the following theorem summarizes Zhang's results.

\begin{theorem}\label{bounds}
There exists a constant $C=C(\omega_0, f^*\omega_Y, T) > 0$ such that on $X \times [0,T)$ the following estimates hold
\begin{align*}
    |u| \leq C, \quad |\nabla u| \leq C, \quad \Delta u \leq C, \quad -\frac{C}{T - t} \leq \Delta u,\\
    \quad 0 \leq \tr_{\omega(t)} (f^*\omega_Y) \leq C, \quad -C \leq R, \quad R \leq \frac{C}{(T-t)^2},
\end{align*}
where all norms, gradients, Laplacians and scalar curvatures are taken with respect to the metric $\omega(t)$.
\end{theorem}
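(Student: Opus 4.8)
The plan is to derive all seven bounds from the maximum principle applied to the evolution equations recorded above, bootstrapping from the two ingredients already available, namely the volume bound $\omega(t)^n\leq C\Omega$ and the standard Monge--Amp\`ere $C^0$-estimate $|\varphi|\leq C$ of \cite{Z10} (valid since $T<\infty$), and using the finiteness of $T$ to turn bounds on $\parabolic{}(\,\cdot\,)$ into bounds on the quantity itself. Write $\sigma:=\tr_{\omega(t)}(f^*\omega_Y)$. The lower trace bound $\sigma\geq 0$ is free, since $f^*\omega_Y$ is the pullback of a K\"ahler form by a holomorphic map, hence a semi-positive $(1,1)$-form, and its $\omega(t)$-trace is therefore nonnegative. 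For $|u|\leq C$ I would first substitute $\dot\varphi=\log(\omega(t)^n/\Omega)-\varphi$ into $u=(1-e^{t-T})\dot\varphi+\varphi$ to get the identity $u=(1-e^{t-T})\log(\omega(t)^n/\Omega)+e^{t-T}\varphi$; the volume bound and $|\varphi|\leq C$ then give the upper bound $u\leq C$ directly (using $0\leq 1-e^{t-T}\leq 1$), while the lower bound follows from $\parabolic{}u=-n+\sigma\geq -n$ and the minimum principle, which yields $\min_X u(\cdot,t)\geq\min_X u(\cdot,0)-nt\geq -C$ on $[0,T)$. This two-sided bound needs no curvature information.

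With $|u|\leq C$ available, the upper trace bound $\sigma\leq C$ follows from the parabolic Schwarz lemma. The standard computation, using a bound $C_0$ on the holomorphic bisectional curvature of $(Y,\omega_Y)$ near $f(X)$, gives $\parabolic{}\log\sigma\leq C_0\sigma+C_1$ (off the zero locus of $\sigma$, handled by the usual regularisation). Applying the maximum principle to $H:=\log\sigma-(C_0+1)u$ and using $\parabolic{}u=-n+\sigma$ yields $\parabolic{}H\leq -\sigma+C_2$, so at a spacetime maximum $H$ is bounded above (using also $u\geq -C$ and the bounded initial data), whence $\log\sigma=H+(C_0+1)u\leq C$ by $u\leq C$. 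Next, the lower scalar bound $-C\leq R$ comes from the reaction--diffusion equation $\parabolic{}R=|\Ric|^2+R\geq R$: the minimum principle gives $R_{\min}(t)\geq R_{\min}(0)e^{t}\geq -C$ on $[0,T)$. Feeding $\sigma\geq 0$ and $R\geq -C$ into (\ref{scalar_formula}), written as $\Delta u=ne^{t-T}-\sigma-(1-e^{t-T})R$, immediately gives the upper Laplacian bound $\Delta u\leq ne^{t-T}+(1-e^{t-T})C\leq C$.

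For $|\nabla u|\leq C$ I would run the maximum principle on the coupled quantity $G:=|\nabla u|^2+A\sigma$ for a large constant $A$. From the Schwarz computation one extracts (on $\{\sigma>0\}$, with standard regularisation) $\parabolic{}\sigma\leq C-|\nabla\sigma|^2/\sigma$ after using $\sigma\leq C$, so the only dangerous term in $\parabolic{}G$ is the cross term $2\Real\ip{\nabla\sigma,\oo{\nabla}u}$ from the evolution of $|\nabla u|^2$. Young's inequality bounds it by $A|\nabla\sigma|^2/\sigma+A^{-1}\sigma|\nabla u|^2$, whose first piece is cancelled by $A\parabolic{}\sigma$ and whose second piece is $\leq (C/A)|\nabla u|^2$; dropping the good terms $-|\nabla\nabla u|^2-|\nabla\oo{\nabla}u|^2$ leaves $\parabolic{}G\leq (1+C/A)G+C$, and the finiteness of $T$ closes the maximum principle to give $G\leq C$, hence $|\nabla u|\leq C$.

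This leaves the two sharp estimates, $R\leq C/(T-t)^2$ and $-C/(T-t)\leq\Delta u$, which is where I expect the real difficulty to lie. Given $0\leq\sigma\leq C$ the two are equivalent through the decomposition $(1-e^{t-T})R=ne^{t-T}-\sigma-\Delta u$ and the elementary comparison $c(T-t)\leq 1-e^{t-T}\leq T-t$: the bound $\Delta u\geq -C/(T-t)$ forces $R\leq C/(T-t)^2$, and conversely. The genuinely hard input is the sharp scalar upper bound $R\leq C/(T-t)^2$, which is the main theorem of \cite{Z10}. The naive maximum principle on $R$ via $\parabolic{}R=|\Ric|^2+R$ is useless here, since the reaction term $|\Ric|^2\geq R^2/n$ is fully consistent with finite-time blow-up; one must instead exploit the precise rate at which the reference forms $e^{-t}\omega_0+(1-e^{-t})\chi$ degenerate to $f^*\omega_Y$, together with the boundedness of $u$. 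I would follow Zhang's argument, running the maximum principle on a quantity assembled from $\Delta u$ (equivalently $(1-e^{t-T})R$), $u$, $\sigma$ and $|\nabla u|^2$, the crux being to tame the uncooperative terms $\Delta\sigma$ and $\ip{\Ric,f^*\omega_Y}$ that appear in $\parabolic{}\Delta u$ once (\ref{Ricci_formula}) is substituted for $\ip{\Ric,i\d\dbar u}$. Once the sharp one-sided bound is secured the remaining estimate follows mechanically.
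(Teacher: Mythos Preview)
The paper does not prove this theorem at all: it is stated as a summary of Zhang's results in \cite{Z10} (``This follows the setup in \cite{Z10} and the following theorem summarizes Zhang's results''), with no argument given. So there is no in-paper proof to compare your proposal against.

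That said, your outline is a faithful reconstruction of the standard route to these estimates and matches what Zhang does. The logical ordering (volume and $|\varphi|$ bounds $\Rightarrow$ $|u|\leq C$ $\Rightarrow$ parabolic Schwarz for $\sigma\leq C$ $\Rightarrow$ $R\geq -C$ $\Rightarrow$ $\Delta u\leq C$ $\Rightarrow$ $|\nabla u|\leq C$, with the sharp $R\leq C/(T-t)^2$ and $\Delta u\geq -C/(T-t)$ last and equivalent via (\ref{scalar_formula})) is correct. Your identification of the sharp scalar upper bound as the only genuinely nontrivial step, requiring Zhang's barrier argument rather than a naive maximum principle on $R$, is exactly right. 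One small point worth flagging: in your $|\nabla u|$ step you invoke $\parabolic{}\sigma\leq C-|\nabla\sigma|^2/\sigma$; this is fine, but it relies on the Ricci term in $\Delta\sigma$ cancelling against the one in $\partial_t\sigma$ coming from $\partial_t g^{-1}$ under (\ref{KRF}), so you should note that cancellation explicitly rather than just citing the elliptic Schwarz inequality (which by itself carries an uncontrolled $|\Ric|$).
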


\section{Global $L^4$-Behaviour of the Ricci Curvature}

Let $\nabla$ and $\oo{\nabla}$ denote the $(1,0)$ and $(0,1)$-parts of the Chern connection of $(X,\omega(t))$, respectively. We shall use the convention that $C$ represents a positive constant which may change line to line and may represent different constants within the same line. We note that the constants below will always depend on $\omega_0$, $f^*\omega_Y$ and $T$ due to Theorem \ref{bounds} unless otherwise stated. Additionally, we may drop the volume form $\omega(t)^n$ when manipulating integrals.

\subsection{Preliminary $L^2$-Estimates}

We begin with the following Proposition which is included in Theorem \ref{main_theorem}.

\begin{proposition}
There exists a constant $C > 0$ such that for all $t \in [0,T)$ the following estimate holds
\begin{equation}\label{Rm_bound}
    \int_X |\Rm|^2 \omega^n \leq \frac{C}{(T-t)^2},
\end{equation}
and additionally,
\begin{equation}\label{L1_scalar_bound}
    \int_X |R| \omega^n \leq C.
\end{equation}
\end{proposition}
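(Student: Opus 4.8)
The plan is to control $\int_X |\Rm|^2\,\omega^n$ by reducing, in the Kähler setting, to the $L^2$-norm of the Ricci curvature plus lower-order terms, and then to bound $\int_X |\Ric|^2\,\omega^n$ using the decomposition (\ref{Ricci_formula}). First I would recall that on a Kähler manifold the full curvature tensor is not pointwise dominated by $\Ric$, so a naive pointwise bound is unavailable; instead I would integrate the second Bianchi identity. Concretely, contracting Bianchi and integrating by parts gives an identity of the schematic form $\int_X |\Rm|^2 = \int_X |\Ric|^2 + (\text{divergence terms}) + \int_X (\text{curvature})\ast(\text{curvature})$ where the cross terms can be absorbed; more robustly, one can use the Chern–Gauss–Bonnet / Chern-number type identity specific to Kähler manifolds, namely that $\int_X |\Rm|^2\,\omega^n - \int_X |\Ric|^2\,\omega^n$ (and similar combinations with $R^2$) equals a fixed characteristic number of $X$ computed against the evolving class $[\omega(t)]$, which is a polynomial in $t$ with coefficients depending only on $[\omega_0]$, $[K_X]$ and $T$, hence bounded. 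This converts (\ref{Rm_bound}) into the estimate $\int_X |\Ric|^2\,\omega^n \leq C/(T-t)^2$ together with $\int_X R^2\,\omega^n \leq C/(T-t)^2$.

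For the Ricci $L^2$-bound I would use (\ref{Ricci_formula}): writing $(1-e^{t-T})\Ric = e^{t-T}\omega - f^*\omega_Y - i\d\dbar u$, take pointwise norms and square. Since $1 - e^{t-T} \geq c(T-t)$ near $T$, it suffices to bound $\int_X |e^{t-T}\omega - f^*\omega_Y - i\d\dbar u|^2\,\omega^n$ uniformly in $t$. The terms $e^{t-T}\omega$ and $f^*\omega_Y$ contribute $\int_X (\text{bounded})\cdot\omega^n \leq C$ using $\Vol_{\omega(t)}(X)\leq C$ and $0\leq \tr_{\omega(t)}(f^*\omega_Y)\leq C$ from Theorem \ref{bounds}. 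The remaining term $\int_X |i\d\dbar u|^2\,\omega^n$ is the crux: I would integrate by parts to write it in terms of $\int_X (\Delta u)^2\,\omega^n$ plus a curvature term $\int_X \Ric\ast \nabla u\ast\nabla u$ (a Bochner-type manipulation, $\int |i\d\dbar u|^2 = \int (\Delta u)^2 - \int \Ric(\nabla u,\oo{\nabla u})$ up to constants). The gradient term is controlled by $|\nabla u|\leq C$ against $\int_X |\Ric|\,\omega^n$, which one bounds first (see below), so it can be absorbed or treated as lower order; and $\int_X (\Delta u)^2\,\omega^n$ is bounded using $-C/(T-t)\leq \Delta u \leq C$: indeed $(\Delta u)^2 \leq C\cdot(-\Delta u)\cdot\frac{1}{T-t} + C$ when $\Delta u < 0$ and $(\Delta u)^2\leq C$ when $\Delta u\geq 0$, while $\int_X \Delta u\,\omega^n = \int_X \Delta u\,\omega^n$ is not zero for a non-fixed metric but can be computed from the evolution equation $\parabolic{}u = -n + \tr_{\omega(t)}(f^*\omega_Y)$, giving $\int_X \Delta u\,\omega^n = \frac{d}{dt}\int_X u\,\omega^n + (\text{bounded})$; combined with $|u|\leq C$ and volume bounds this yields $\int_X |\Delta u|\,\omega^n \leq C$, hence $\int_X (\Delta u)^2\,\omega^n \leq C/(T-t)$, which is even better than needed.

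For (\ref{L1_scalar_bound}), the cleanest route is the evolution of the volume form: $\frac{d}{dt}\omega(t)^n = (-R - n)\,\omega(t)^n$, so $\int_X R\,\omega^n = -\frac{d}{dt}\Vol_{\omega(t)}(X) - n\Vol_{\omega(t)}(X)$, and since $\Vol_{\omega(t)}(X) = \int_X \omega(t)^n$ is an explicit polynomial in $e^{-t}$ (being a cohomological quantity, $[\omega(t)]^n$) its derivative is bounded on $[0,T)$; hence $\left|\int_X R\,\omega^n\right|\leq C$. Then $\int_X |R|\,\omega^n = \int_X R\,\omega^n + 2\int_X R_-\,\omega^n \leq C + 2C\,\Vol_{\omega(t)}(X)\leq C$ using the uniform lower bound $R\geq -C$ from Theorem \ref{bounds}. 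I would prove (\ref{L1_scalar_bound}) first and use it, via $|R|\geq$ (a dimensional constant)$\cdot|\Ric|$ failing in general — so instead I use it to get $\int_X |\Ric|\,\omega^n$ from (\ref{scalar_formula}) rearranged, or more directly bound $\int_X|\Ric|\,\omega^n$ by the same argument applied to (\ref{Ricci_formula}) with an $L^1$ (rather than $L^2$) norm, which only needs $\int_X|\Delta u|\,\omega^n\leq C$ established above.

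The main obstacle I anticipate is the passage from $|\Ric|$ to $|\Rm|$ in $L^2$: one must either invoke the correct Kähler characteristic-class identity (and check its coefficients are bounded in $t$ along the evolving class, which is routine since they are polynomials in $e^{-t}$ on $[0,T]$) or run a Bochner–Weitzenböck integration by parts and carefully absorb the cubic curvature term — the latter is delicate because $|\Rm|$ is the quantity being estimated, so the cross term $\int \Rm\ast\Rm\ast\Rm$ cannot simply be absorbed and the characteristic-class identity is genuinely needed. A secondary technical point is justifying all integrations by parts and the identity $\frac{d}{dt}\int_X u\,\omega^n = \int_X (\parabolic{}u)\,\omega^n + \int_X u\cdot(\Delta u)\,\omega^n - \int_X u(R+n)\omega^n$ type formulas, i.e. tracking the time-dependence of the volume form, but these are standard for the Kähler–Ricci flow.
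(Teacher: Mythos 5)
Your proof of (\ref{L1_scalar_bound}) is fine and is essentially the paper's argument: both reduce $\int_X R\,\omega^n$ to a cohomological quantity (you via $\frac{d}{dt}\Vol_{\omega(t)}(X)$ with $\Vol$ a polynomial in $e^{-t}$, the paper via $\int_X R\,\omega^n = 2\pi n\int_X c_1\wedge\omega^{n-1}$) and then use the uniform lower bound $R\geq -C$. You also correctly identify that the passage from $\Ric$ to $\Rm$ must go through a Chern--Weil identity rather than a Bianchi/absorption argument.

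However, there is a genuine gap in your route to (\ref{Rm_bound}). You reduce it to proving $\int_X|\Ric|^2\,\omega^n\leq C/(T-t)^2$ separately, and your proposed proof of that does not deliver this power. Indeed $\int_X|i\d\dbar u|^2\,\omega^n=\int_X|\nabla\oo{\nabla}u|^2\,\omega^n=\int_X(\Delta u)^2\,\omega^n$ exactly (no Ricci correction for the mixed Hessian; the curvature term you write appears only for $\nabla\nabla u$), and $\int_X\Delta u\,\omega^n=0$ at each fixed $t$ by the divergence theorem --- your worry about the moving metric is a red herring, since this is a statement at a single time. From $\Delta u\leq C$, $\Delta u\geq -C/(T-t)$ and $\int_X\Delta u\,\omega^n=0$ the best one gets is $\int_X(\Delta u)^2\,\omega^n\leq C/(T-t)$; this is \emph{not} ``better than needed'', since your reduction requires the uniform bound $\int_X(\Delta u)^2\,\omega^n\leq C$ in order to conclude $\int_X|\Ric|^2\,\omega^n\leq C(1-e^{t-T})^{-2}\leq C/(T-t)^2$. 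As it stands you only obtain $\int_X|\Ric|^2\,\omega^n\leq C/(T-t)^3$, hence $\int_X|\Rm|^2\,\omega^n\leq C/(T-t)^3$, which is weaker than the claim. The missing idea is the choice of characteristic class: in the combination
\begin{equation*}
    \int_X (c_2-c_1^2)\wedge\omega^{n-2}=\frac{1}{8\pi^2 n(n-1)}\int_X(|\Rm|^2-R^2)\,\omega^n
\end{equation*}
the $|\Ric|^2$ contributions cancel identically, so no independent estimate of $\int_X|\Ric|^2\,\omega^n$ is ever needed; one only needs $\int_X R^2\,\omega^n\leq \frac{C}{(T-t)^2}\int_X|R|\,\omega^n\leq C/(T-t)^2$, which follows from the pointwise bounds of Theorem \ref{bounds} together with your (\ref{L1_scalar_bound}). (The bound $\int_X|\Ric|^2\,\omega^n\leq C/(T-t)^2$ is then recovered a posteriori from the $c_1^2$ identity, not proved directly from the Ricci potential.)
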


\begin{proof}
Let us assume $n > 1$, although the statement still holds in the case $n = 1$. One can use Chern-Weil theory (see Kobayashi \cite[\S 4.4]{K14}), to express Chern forms, $c_1, c_2$, dependent on $t$, in terms of the Riemannian, Ricci and scalar curvatures as
\begin{align*}
    c^2_1 \wedge \omega^{n-2} &= \frac{1}{4\pi^2 n(n-1)} (R^2 - |\Ric|^2) \omega^n,\\
    c_2 \wedge \omega^{n-2} &= \frac{1}{8\pi^2 n(n-1)} (R^2 - 2|\Ric|^2 + |\Rm|^2) \omega^n.
\end{align*}
Thus
\begin{equation*}
    \int_X (c_2-c_1^2) \wedge \omega^{n-2} = \frac{1}{8\pi n(n-1)} \int_X (|\Rm|^2-R^2)\omega^n,
\end{equation*}
so that the left-hand side integrand is purely cohomological and thus the integral is a topological invariant for all $t\in [0,T]$ as the class $[\omega(T)]$ is still well-defined. Furthermore, the integral on the left-hand side is a continuous function from $[0,T]$ to $\R$, so it is bounded above and below. Therefore
\begin{equation*}
    \int_X |\Rm|^2 \leq C+\int_X R^2.
\end{equation*}
Next, note that
\begin{equation*}
    \int_X R \omega^n = \int_X n \Ric \wedge \omega^{n-1} = 2 \pi n\int_X c_1 \wedge \omega^{n-1}
\end{equation*}
so that by the same argument
\begin{equation*}
    \int_X R \omega^n \leq C.
\end{equation*}
Additionally, as $R$ is uniformly bounded below, we have
\begin{equation*}
    \int_X |R| \omega^n \leq C.
\end{equation*}
By Theorem \ref{bounds} one finds
\begin{align*}
    \int_X R^2 \leq \frac{C}{(T-t)^2} \int_X |R|\leq \frac{C}{(T-t)^2}
\end{align*}
and the result follows.
\end{proof}

\noindent 
The above argument is essentially entirely topological apart from the use of Theorem \ref{bounds} in estimating the $L^2$-norm of the scalar curvature. Additionally, the $L^2$-behaviour of the Riemann curvature tensor is entirely equivalent to the $L^2$-behaviour of the scalar curvature as a result. This confirms that the Riemannian curvature is indeed Type I in the $L^2$-sense.\\

\noindent The next two results provide essential control on the complex Hessians of the Ricci potentials to be used throughout.

\begin{proposition}\label{L2_mixed_hessian_bound}
There exists a constant $C > 0$ such that for all $t \in [0,T)$ we have
\begin{equation*}
    \int_X |\nabla \oo{\nabla} u|^2 \omega^n \leq \frac{C}{(T-t)^2}.
\end{equation*}
\end{proposition}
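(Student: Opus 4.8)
The plan is to integrate the evolution equation for $\Delta u$ against a suitable weight in time and exploit the uniform bounds of Theorem \ref{bounds}. The key observation is that $|\nabla\oo\nabla u|^2 = |i\d\dbar u|^2$ is (up to harmless constants) the quantity whose integral we want to control, and that $\Delta(\Delta u)$ or $\ip{\Ric, i\d\dbar u}$ produce exactly such a Hessian-squared term after an integration by parts. Concretely, I would start from
\begin{equation*}
    \parabolic{} \Delta u = \Delta u + \ip{\Ric, i\d\dbar u} + \Delta \tr_{\omega(t)}(f^*\omega_Y),
\end{equation*}
and substitute the decomposition (\ref{Ricci_formula}) for $\Ric(\omega(t))$, namely $(1-e^{t-T})\Ric = e^{t-T}\omega - f^*\omega_Y - i\d\dbar u$. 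This turns $\ip{\Ric, i\d\dbar u}$ into $\frac{1}{1-e^{t-T}}\big(e^{t-T}\Delta u - \ip{f^*\omega_Y, i\d\dbar u} - |i\d\dbar u|^2\big)$, so the $-|i\d\dbar u|^2$ term appears with a good sign once we move things to the other side. Since $\frac{1}{1-e^{t-T}} \sim \frac{T}{T-t}$ near $t=T$, the coefficient of $|\nabla\oo\nabla u|^2$ is comparable to $(T-t)^{-1}$.

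The next step is to integrate over $X$. The term $\int_X \Delta(\Delta u)\,\omega^n$ vanishes (integral of a Laplacian on a closed manifold), and similarly $\int_X \Delta\tr_{\omega(t)}(f^*\omega_Y)\,\omega^n = 0$; however one must be careful because $\frac{\d}{\d t}$ does not commute with $\int_X \cdot\,\omega^n$ — differentiating the volume form produces a term involving $\frac{\d}{\d t}\omega^n = -(R+n)\omega^n$, which is why I would instead integrate $\frac{\d}{\d t}\int_X \Delta u\,\omega^n$ directly and note $\int_X \Delta u\,\omega^n$ is not simply zero but its time-derivative can be computed. Alternatively, and more cleanly, one integrates by parts in the spatial variable: $\int_X \ip{\Ric, i\d\dbar u}\,\omega^n$, after substituting (\ref{Ricci_formula}), gives $\frac{1}{1-e^{t-T}}\int_X\big(e^{t-T}\Delta u - \ip{f^*\omega_Y,i\d\dbar u} - |\nabla\oo\nabla u|^2\big)\,\omega^n$; then rearrange to isolate $\int_X |\nabla\oo\nabla u|^2$ and bound everything else. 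The terms $\int_X \Delta u\,\omega^n$ and $\int_X \ip{f^*\omega_Y, i\d\dbar u}\,\omega^n = \int_X \ip{\nabla\tr_{\omega(t)}(f^*\omega_Y), \oo\nabla u}\cdot(\cdots)$ are controlled using $|\Delta u|\leq \frac{C}{T-t}$, $|\nabla u|\leq C$, $0\le \tr_{\omega(t)}(f^*\omega_Y)\le C$, and the $L^2$-bound on $|\nabla\tr_{\omega(t)}(f^*\omega_Y)|$ — though this last one may itself require the evolution equation for $\tr_{\omega(t)}(f^*\omega_Y)$, which is a point to watch.

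The main obstacle I anticipate is handling the time-derivative term $\int_X \parabolic{}\Delta u\,\omega^n$ rigorously: one needs either a differential inequality of the form $\frac{d}{dt}\big(\text{something}\big) \geq c(T-t)^{-1}\int_X|\nabla\oo\nabla u|^2 - (\text{bounded or integrable error})$ and then integrate in time, or one must avoid the time derivative altogether by working pointwise. The cleanest route is probably to combine the evolution equation for $\Delta u$ with that for $|\nabla u|^2$ (which contains $-|\nabla\oo\nabla u|^2$ with a good sign already) — integrating the $|\nabla u|^2$ equation over $X$, the left side is $\frac{d}{dt}\int_X |\nabla u|^2\omega^n$ plus a volume-variation term, the right side contains $-\int_X|\nabla\oo\nabla u|^2$, and since $|\nabla u|$ is uniformly bounded the time-integral of the left side over any subinterval is bounded; this yields $\int_0^T\int_X |\nabla\oo\nabla u|^2\,\omega^n\,dt \leq C$, which is even stronger than claimed, but does not directly give the pointwise-in-$t$ bound $\frac{C}{(T-t)^2}$. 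To get the stated pointwise bound one genuinely needs the $(1-e^{t-T})^{-1}$ weighting from (\ref{Ricci_formula}) together with $|\Delta u| \le \frac{C}{T-t}$, so I would carry out the $\Delta u$-evolution-equation computation, integrate over $X$, discard the spatial total-Laplacian terms (justifying the volume-variation correction via $\frac{\d}{\d t}\omega^n = -(R+n)\omega^n$ and the bounds $|R|\le \frac{C}{(T-t)^2}$, $\int_X|R|\le C$), and solve for $\int_X |\nabla\oo\nabla u|^2\,\omega^n$, at which point every remaining term is bounded by a constant times $(T-t)^{-2}$.
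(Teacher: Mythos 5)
Your route is genuinely different from the paper's, and considerably heavier. The paper's entire proof is two integrations by parts: for any smooth real $v$ on a compact K\"ahler manifold one has $\int_X |\nabla\oo{\nabla} v|^2\,\omega^n = \int_X (\Delta v)^2\,\omega^n$ (move $\nabla_i$ off $\nabla_i\nabla_{\oo{j}}u$, commute to get $\nabla_j\Delta u$ with no curvature correction, move $\nabla_j$ back), and then $|\Delta u|\le \frac{C}{T-t}$ from Theorem \ref{bounds} together with the volume bound gives the claim immediately. You never invoke this identity, and instead run the evolution equation for $\Delta u$ through the decomposition (\ref{Ricci_formula}). That plan can in fact be closed: the time-derivative term you flag as the "main obstacle" is harmless because $\int_X \Delta u\,\omega^n \equiv 0$, so $\int_X \partial_t(\Delta u)\,\omega^n = \int_X \Delta u\,(R+n)\,\omega^n = O\bigl((T-t)^{-1}\bigr)$ by $|\Delta u|\le\frac{C}{T-t}$ and (\ref{L1_scalar_bound}); the term $\int_X\ip{f^*\omega_Y,i\d\dbar u}\,\omega^n$ needs no gradient estimate on $\tr_\omega(f^*\omega_Y)$ since $|f^*\omega_Y|_\omega\le\tr_\omega(f^*\omega_Y)\le C$ pointwise and Cauchy--Schwarz lets you absorb it into the good $-|\nabla\oo{\nabla}u|^2$ term. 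Carried out, your argument actually yields the stronger uniform bound $\int_X|\nabla\oo{\nabla}u|^2\,\omega^n\le C$ (which one can also see from (\ref{scalar_formula}) and $\int_X R^2\le C(T-t)^{-2}$). So the approach buys a sharper constant at the cost of several auxiliary estimates, whereas the paper's buys brevity. As written, though, your proposal is a plan rather than a proof: the decisive cancellation $\int_X\Delta u\,\omega^n=0$ is never identified, and the text hedges between two or three alternative strategies without executing any of them to the end. You should either write out the evolution-equation computation in full, or simply use the integration-by-parts identity and the $C^0$ bound on $\Delta u$.
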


\begin{proof}
Using Theorem \ref{bounds}, one can calculate in normal coordinates
\begin{align*}
    \int_X |\nabla \oo{\nabla} u|^2 &= \int_X \nabla_i \nabla_{\oo{j}} u \cdot \nabla_{\oo{i}} \nabla_{j}  u\\
    &= \int_X -\nabla_{\oo{j}} u \cdot \nabla_i \nabla_{\oo{i}} \nabla_{j} u\\
    &= \int_X - \nabla_{\oo{j}} u \cdot \nabla_j \Delta u\\
    &= \int_X (\Delta u)^2\\
    &\leq \frac{C}{(T-t)^2}.
\end{align*}
\end{proof}

\noindent Take note that it is true for any arbitrary, smooth, real function $v : X \to \R$ that
\begin{equation*}
    \int_X |\nabla \oo{\nabla} v|^2 \omega^n = \int_X (\Delta v)^2 \omega^n.
\end{equation*}

\begin{proposition}\label{L2_unmixed_hessian_bound}
There exists a constant $C > 0$ such that for all $t \in [0,T)$ we have
\begin{equation*}
    \int_X |\nabla \nabla u|^2 \omega^n \leq \frac{C}{(T-t)^2}.
\end{equation*}
\end{proposition}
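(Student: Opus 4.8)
The plan is to run the integration-by-parts scheme of Proposition~\ref{L2_mixed_hessian_bound}, the new feature being that moving a derivative across $\nabla\nabla u$ forces one to commute a holomorphic and an anti-holomorphic covariant derivative, which produces a curvature error term. Working in normal coordinates at a point and using that $u$ is real, so that $\nabla_{\oo i}\nabla_{\oo j}u=\overline{\nabla_i\nabla_j u}$, I would first integrate by parts exactly as in Proposition~\ref{L2_mixed_hessian_bound} to obtain
\begin{equation*}
\int_X |\nabla\nabla u|^2 = \int_X \nabla_i\nabla_j u\cdot\nabla_{\oo i}\nabla_{\oo j}u = -\int_X \nabla_{\oo i}\nabla_i\nabla_j u\cdot\nabla_{\oo j}u .
\end{equation*}
Since $X$ is K\"ahler, two holomorphic (or two anti-holomorphic) covariant derivatives commute on any tensor, and $\nabla_{\oo i}\nabla_j u=\nabla_j\nabla_{\oo i}u$ for the scalar $u$; the only obstruction to rewriting $\nabla_{\oo i}\nabla_i\nabla_j u$ as $\nabla_j\nabla_i\nabla_{\oo i}u=\nabla_j\Delta u$ is the mixed commutator $[\nabla_{\oo i},\nabla_i]$ acting on the $(1,0)$-form $\nabla u$, and after contracting the repeated index $i$ this is a contraction of the Ricci tensor. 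Thus $\nabla_{\oo i}\nabla_i\nabla_j u=\nabla_j\Delta u+(\Ric(\omega(t))\ast\nabla u)_j$.

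Substituting this and integrating by parts once more in the term containing $\nabla_j\Delta u$ produces a Bochner-type identity of the form
\begin{equation*}
\int_X |\nabla\nabla u|^2\,\omega^n = \int_X (\Delta u)^2\,\omega^n + E, \qquad |E|\le \int_X |\Ric(\omega(t))|\,|\nabla u|^2\,\omega^n .
\end{equation*}
The first term on the right is $\le C(T-t)^{-2}$ by Proposition~\ref{L2_mixed_hessian_bound} (or directly from Theorem~\ref{bounds}). For $E$ I would use $|\nabla u|\le C$ from Theorem~\ref{bounds} to reduce to bounding $\int_X |\Ric(\omega(t))|$, and here the decomposition (\ref{Ricci_formula}) is the key input: since $1-e^{t-T}\ge c(T-t)$ on $[0,T)$ for some $c>0$, since $f^*\omega_Y\ge 0$ so that $|f^*\omega_Y|_{\omega(t)}\le\tr_{\omega(t)}(f^*\omega_Y)\le C$ by Theorem~\ref{bounds}, and since $|i\d\dbar u|_{\omega(t)}=|\nabla\oo{\nabla}u|$, the triangle inequality, Cauchy--Schwarz, and the bounded volume along the flow give
\begin{equation*}
\int_X |\Ric(\omega(t))| \le \frac{C}{T-t}\Big(\Vol_{\omega(t)}(X) + \Vol_{\omega(t)}(X)^{1/2}\big(\textstyle\int_X |\nabla\oo{\nabla}u|^2\big)^{1/2}\Big) \le \frac{C}{T-t}\Big(C+\frac{C}{T-t}\Big),
\end{equation*}
the last inequality by Proposition~\ref{L2_mixed_hessian_bound}. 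Combining the two contributions gives $\int_X |\nabla\nabla u|^2\le C(T-t)^{-2}$.

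The step I expect to be the main obstacle — and essentially the only place beyond Theorem~\ref{bounds} where the geometric hypotheses enter — is controlling the curvature term $E$: there is no pointwise bound on $\Ric(\omega(t))$, so it cannot simply be pulled out of the integral. The point is that the commutator delivers only a contraction of the Ricci tensor (not the full curvature tensor $\Rm$), whose $L^1$-norm is tamed by the algebraic decomposition (\ref{Ricci_formula}) together with the $L^2$ Hessian bound of Proposition~\ref{L2_mixed_hessian_bound}; alternatively one could absorb $E$ using the $L^2$ bound (\ref{Rm_bound}) on $\Rm$, which in fact yields a slightly better rate. One should of course verify the precise sign and index placement in the commutator, but since $E$ enters only through $|E|$ this does not affect the bound.
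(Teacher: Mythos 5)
Your proof is correct and follows essentially the same scheme as the paper's: integrate by parts, commute $\nabla_{\oo i}$ past $\nabla_i\nabla_j$ to pick up $\nabla_j\Delta u$ plus a Ricci contraction, integrate by parts again to produce $\int_X(\Delta u)^2\le C(T-t)^{-2}$, and control the error $E$ through $\int_X|\Ric|$. The one place you diverge is the bound on $\int_X|\Ric|$: you derive $|\Ric|\le C(T-t)^{-1}(1+|\nabla\oo\nabla u|)$ from the decomposition (\ref{Ricci_formula}) and then invoke Proposition \ref{L2_mixed_hessian_bound} and the bounded volume, giving $E\le C(T-t)^{-2}$, whereas the paper uses $\norm{\Ric}_{L^1}\le C\norm{\Ric}_{L^2}\le C'\norm{\Rm}_{L^2}\le C(T-t)^{-1}$ from the Chern--Weil estimate (\ref{Rm_bound}) — the alternative you yourself mention. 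Both inputs are already established at this point in the paper and both yield the claimed $C(T-t)^{-2}$; your route is marginally more self-contained (it needs only the $C^0$ estimates of Theorem \ref{bounds} and the preceding $L^2$ Hessian bound, not the topological argument), while the paper's gives the slightly sharper error $E\le C(T-t)^{-1}$, which is immaterial here.
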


{\allowdisplaybreaks
\begin{proof}
Using Theorem \ref{bounds}, one can calculate in normal coordinates
\begin{align*}
    \int_X |\nabla \nabla u|^2 &= \int_X \nabla_i \nabla_j u \cdot \nabla_{\oo{i}} \nabla_{\oo{j}} u\\
    &= -\int_X \nabla_{\oo{i}} \nabla_i \nabla_j u \cdot \nabla_{\oo{j}} u\\
    &= -\int_X (\nabla_j \Delta u +  R_{j\oo{k}}\nabla_k u) \nabla_{\oo{j}} u\\
    &= \int_X (\Delta u)^2 - R_{j\oo{k}} \nabla_k u \nabla_{\oo{j}} u\\
    &\leq \int_X (\Delta u)^2 + |\Ric| |\nabla u|^2\\
    &\leq \frac{C}{(T-t)^2},
\end{align*}
where the last line uses the fact that $\norm{\Ric}_{L^1} \leq C\norm{\Ric}_{L^2} \leq C' \norm{\Rm}_{L^2}$ in combination with (\ref{Rm_bound}).
\end{proof}
}

\noindent We also require better control on the behaviour of the trace $\tr_\omega(f^*\omega_Y)$.

\begin{proposition}\label{gradient_trace_bound}
There exists a constant $C > 0$ such that for all $t \in [0,T)$ the following estimate holds
\begin{equation*}
    \int_X |\nabla \tr_\omega(f^* \omega_Y)|^2 \omega^n \leq \frac{C}{T-t}.
\end{equation*}
\end{proposition}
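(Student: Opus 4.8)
The plan is to integrate the Dirichlet energy by parts and insert a Chern--Lu type lower bound for the Laplacian of $v:=\tr_\omega(f^*\omega_Y)$. Since $v$ is a smooth real function, integration by parts gives
\[
\int_X |\nabla v|^2\,\omega^n \;=\; -\int_X v\,\Delta v\,\omega^n ,
\]
so, because $v\ge 0$ by Theorem \ref{bounds}, it suffices to bound $\Delta v$ from below by something whose integral against $v$ we can estimate.

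For this I would use the Chern--Lu inequality (the elliptic Schwarz lemma computation, as in \cite{ST07,Z10}) for the holomorphic map $f\colon(X,\omega)\to(Y,\omega_Y)$. As $\omega_Y$ is a fixed smooth Kähler metric on a neighbourhood of the compact set $f(X)$, its holomorphic bisectional curvature is bounded there by some constant $C_0$, and the standard computation yields, at every point where $v>0$,
\[
\Delta_\omega \log v \;\ge\; -\,|\Ric(\omega)|_\omega \;-\; C_0\, v ,
\]
the first term arising from the (possibly very negative) Ricci curvature of the domain contracted against $f^*\omega_Y$ divided by $v$, with no residual $1/v$ singularity, and the second from the bisectional curvature of the target. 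Multiplying by $v$ and discarding the nonnegative term $|\nabla v|^2/v$ coming from $\Delta_\omega v = v\,\Delta_\omega\log v + |\nabla v|^2/v$, one obtains $\Delta_\omega v \ge -v\,|\Ric(\omega)|_\omega - C_0 v^2$ wherever $v>0$; where $v=0$ the point is a minimum of $v$, so $\Delta_\omega v\ge 0$ there while $f^*\omega_Y$ and hence the right-hand side vanishes, and the inequality persists by continuity. Invoking the uniform bound $0\le v\le C$ of Theorem \ref{bounds}, this gives the pointwise estimate
\[
\Delta_\omega v \;\ge\; -\,C\big(1+|\Ric(\omega)|_\omega\big) \qquad \text{on } X\times[0,T).
\]

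Plugging this into the integration by parts and using $v\le C$ once more,
\[
\int_X |\nabla v|^2\,\omega^n \;\le\; C\int_X v\big(1+|\Ric|\big)\,\omega^n \;\le\; C\,\Vol_\omega(X) + C\int_X |\Ric|\,\omega^n .
\]
The volume is uniformly bounded along the flow, while by Cauchy--Schwarz, $\norm{\Ric}_{L^2}\le C\norm{\Rm}_{L^2}$, and the estimate (\ref{Rm_bound}),
\[
\int_X |\Ric|\,\omega^n \;\le\; \Vol_\omega(X)^{1/2}\Big(\int_X |\Ric|^2\,\omega^n\Big)^{1/2} \;\le\; C\Big(\int_X |\Rm|^2\,\omega^n\Big)^{1/2} \;\le\; \frac{C}{T-t} .
\]
Since $T-t\le T$, the bounded term satisfies $C\le CT/(T-t)$, so absorbing it yields $\int_X |\nabla v|^2\,\omega^n \le C/(T-t)$, as claimed.

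I expect the delicate point to be the Chern--Lu lower bound itself: one must verify that the target's bisectional curvature enters $\Delta_\omega\log v$ only through a term of order $O(v)$ (so that the $C^0$ bound $v\le C$ renders it harmless), that the domain's Ricci term divided by $v$ stays controlled by $|\Ric(\omega)|_\omega$ with no $1/v$ blow-up, and one must treat the locus $\{v=0\}$, where the $\log$ computation degenerates, by the minimum-point observation above. Granting the inequality, the remainder is only integration by parts, Cauchy--Schwarz, and the already-established $L^2$ bound on $\Rm$.
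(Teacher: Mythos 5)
Your argument is correct and is essentially the paper's proof: both rest on the Chern--Lu/Schwarz-lemma inequality for the holomorphic map $f$, the uniform bound $0\le \tr_\omega(f^*\omega_Y)\le C$, and the chain $\norm{\Ric}_{L^1}\le C\norm{\Ric}_{L^2}\le C\norm{\Rm}_{L^2}\le C/(T-t)$ from (\ref{Rm_bound}). The only (cosmetic) difference is that the paper keeps the term $|\nabla v|^2/v$ in the Chern--Lu inequality to get the pointwise bound $|\nabla v|^2\le C(\Delta v+|\Ric|+1)$ and then integrates, with $\int_X\Delta v\,\omega^n=0$, whereas you discard that term and recover the gradient by integrating $-\int_X v\,\Delta v\,\omega^n$ by parts; both routes land on $\int_X|\nabla v|^2\le C\int_X(|\Ric|+1)$.
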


\begin{proof}
Note the following inequality, \cite[Eq. 3.19]{TZ15}, \cite{ST07}, which still holds in our case,
\begin{equation*}
    \Delta \tr_\omega (f^*\omega_Y) \geq -|\Ric| \tr_\omega (f^* \omega_Y) - C (\tr_\omega (f^* \omega_Y))^2 + \frac{|\nabla \tr_\omega (f^*\omega_Y)|^2}{\tr_\omega (f^*\omega_Y)}.
\end{equation*}
It follows that
\begin{align}\label{lower_trace_bound}
    |\nabla \tr_\omega(f^*\omega_Y)|^2 \leq C(\Delta \tr_\omega (f^*\omega_Y) + |\Ric|+1).
\end{align}
Therefore we have
\begin{equation*}
    \int_X |\nabla \tr_\omega (f^*\omega_Y)|^2 \leq C\int_X (|\Ric|+1) \leq \frac{C}{T-t},
\end{equation*}
since $\norm{\Ric}_{L^1} \leq C\norm{\Ric}_{L^2} \leq C' \norm{\Rm}_{L^2}$.
\end{proof}

\noindent The following estimate introduces an additional time integral which is then forced to be carried through the remaining estimates, in particular this includes the $L^4$-type estimate of the Ricci curvature. We provide some further analysis relevant to $\nabla \Delta u$ in the next section in an attempt to circumvent this issue.

\begin{proposition}
There exists a constant $C > 0$ such that
\begin{equation*}
    \lim_{\tau \to T} \int_0^\tau \int_X (T-t)^4 |\nabla \Delta u(t)|^2 \omega(t)^n dt \leq C.
\end{equation*}
\end{proposition}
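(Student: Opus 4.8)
The plan is to derive a differential inequality for the spatial integral $\int_X |\nabla u|^2$, or more precisely for a suitably weighted version $\int_X (T-t)^a |\nabla u|^2$, by differentiating in time and using the evolution equation $\parabolic{}|\nabla u|^2 = |\nabla u|^2 - |\nabla\nabla u|^2 - |\nabla\oo\nabla u|^2 + 2\Real\ip{\nabla\tr_\omega(f^*\omega_Y),\oo\nabla u}$ from Section 2. Integrating this over $X$, the Laplacian term integrates to zero (up to the standard $\frac{d}{dt}\int fomega^n$ correction involving $-R$ times the integrand, which is controlled since $R \geq -C$ and $|\nabla u| \leq C$), and one is left with $\frac{d}{dt}\int_X |\nabla u|^2 \leq C\int_X |\nabla u|^2 - \int_X |\nabla\nabla u|^2 - \int_X |\nabla\oo\nabla u|^2 + 2\int_X \Real\ip{\nabla\tr_\omega(f^*\omega_Y),\oo\nabla u}$. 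The point is that the $|\nabla\oo\nabla u|^2 = (\Delta u)^2$ term on the right has a \emph{good} sign and can be kept; but we actually want $|\nabla\Delta u|^2$, not $(\Delta u)^2$, so this first route only controls Hessians, not third derivatives.

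To get $\nabla\Delta u$, the cleaner approach is to run the same Bochner-type argument one derivative higher, i.e. to look at the evolution of $\int_X |\nabla\oo\nabla u|^2 = \int_X (\Delta u)^2$. Differentiating $\int_X (\Delta u)^2$ in time and using $\parabolic{}\Delta u = \Delta u + \ip{\Ric, i\d\dbar u} + \Delta\tr_\omega(f^*\omega_Y)$, integration by parts produces a term $-2\int_X |\nabla\Delta u|^2$ with the right sign, together with error terms of the form $\int_X \Delta u \cdot \ip{\Ric, i\d\dbar u}$ and $\int_X \Delta u\cdot\Delta\tr_\omega(f^*\omega_Y)$ (the latter handled by integration by parts into $-\int_X \ip{\nabla\Delta u, \nabla\tr_\omega(f^*\omega_Y)}$, then absorbed via Cauchy–Schwarz into $\e\int_X|\nabla\Delta u|^2 + C_\e\int_X|\nabla\tr_\omega(f^*\omega_Y)|^2$, and the last integral is $\leq C/(T-t)$ by Proposition \ref{gradient_trace_bound}). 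So, rearranged,
\begin{equation*}
    \frac{d}{dt}\int_X (\Delta u)^2 + \int_X |\nabla\Delta u|^2 \leq C\int_X (\Delta u)^2 + C\int_X |\Ric||\nabla\oo\nabla u|^2 + \frac{C}{T-t} + (\text{l.o.t.}),
\end{equation*}
where the curvature error $\int_X |\Ric||\nabla\oo\nabla u|^2$ is bounded by $\norm{\Ric}_{L^2}\norm{\nabla\oo\nabla u}_{L^4}^2$ or more crudely, using $\Delta u \leq C$ and $\Delta u \geq -C/(T-t)$, by $\frac{C}{T-t}\int_X|\Ric| \leq \frac{C}{(T-t)^2}$ via $\norm{\Ric}_{L^1}\leq C\norm{\Rm}_{L^2}\leq C/(T-t)$ from (\ref{Rm_bound}). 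Thus both the $\int_X(\Delta u)^2$ term (bounded by $C/(T-t)^2$ by Proposition \ref{L2_mixed_hessian_bound}) and all error terms are $O((T-t)^{-2})$.

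Now I would multiply through by the weight $(T-t)^4$ and integrate in $t$ from $0$ to $\tau$. Writing $F(t) = \int_X (\Delta u)^2\,\omega^n$, we have $F(t) \leq C/(T-t)^2$ and $F'(t) + \int_X|\nabla\Delta u|^2 \leq CF(t) + C/(T-t)^2$. Then
\begin{equation*}
    \int_0^\tau (T-t)^4\int_X|\nabla\Delta u|^2\,dt \leq \int_0^\tau (T-t)^4\big(CF(t) + \tfrac{C}{(T-t)^2} - F'(t)\big)\,dt.
\end{equation*}
The $(T-t)^4 F(t)$ and $(T-t)^4(T-t)^{-2} = (T-t)^2$ terms integrate to finite quantities since $(T-t)^4 F(t) \leq C(T-t)^2$ is bounded. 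For the $-\int_0^\tau (T-t)^4 F'(t)\,dt$ term, integrate by parts: it equals $-[(T-t)^4 F(t)]_0^\tau - 4\int_0^\tau (T-t)^3 F(t)\,dt$, and the boundary term at $\tau$ is $-(T-\tau)^4 F(\tau) \leq 0$ (good sign, since $F \geq 0$), at $0$ it is $T^4 F(0)$ which is a fixed constant, and the remaining integral $\int_0^\tau (T-t)^3 F(t)\,dt \leq C\int_0^\tau (T-t)\,dt < \infty$. Taking $\tau \to T$ gives the claimed bound. The main obstacle, and the reason the $(T-t)^4$ weight (rather than a lower power) appears, is controlling the curvature error term $\int_X|\Ric|\,|\nabla\oo\nabla u|^2$: one only has $\norm{\Ric}_{L^1} \leq C/(T-t)$ and $\norm{\nabla\oo\nabla u}_{L^2}^2 \leq C/(T-t)^2$, so without a sharper $L^p$ interpolation the product forces the $(T-t)^{-3}$-type growth that the fourth power of the weight is designed to kill after one integration by parts in time; getting this balance exactly right — and verifying that the boundary terms from the time integration by parts have the correct sign — is the delicate part of the argument.
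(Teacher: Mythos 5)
Your overall architecture coincides with the paper's: you compute the evolution of $(\Delta u)^2$, integrate over $X$ so that $-2|\nabla \Delta u|^2$ appears with a good sign, handle the $\Delta u\,\Delta\tr_\omega(f^*\omega_Y)$ term by integrating by parts against $\nabla\tr_\omega(f^*\omega_Y)$ and invoking Proposition \ref{gradient_trace_bound}, account for $\frac{d}{dt}\omega^n=(-R-n)\omega^n$ via $R\geq -C$, weight by $(T-t)^4$, and integrate by parts in time, where the boundary term $-(T-\tau)^4\int_X(\Delta u(\tau))^2$ has the correct sign. All of that is right and matches the paper.

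The gap is in the one term that actually dictates the exponent, $\int_X \Delta u\,\ip{\Ric,i\d\dbar u}$, and none of your three proposed bounds for it works as stated. The H\"older bound $\norm{\Ric}_{L^2}\norm{\nabla\oo{\nabla} u}_{L^4}^2$ is circular: the $L^4$ control of $\nabla\oo{\nabla}u$ is derived later in the paper as a consequence of this very proposition. The ``crude'' route replaces $|\nabla\oo{\nabla}u|^2$ by $C/(T-t)$ using the two-sided bound on $\Delta u$; but $\Delta u$ is only the trace of $i\d\dbar u$, and a bounded trace does not bound the full Hessian norm pointwise (eigenvalues can blow up with cancelling signs) --- and even for the trace the available bound is $(\Delta u)^2\leq C/(T-t)^2$, not $C/(T-t)$. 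Finally, pairing $\norm{\Ric}_{L^1}\leq C/(T-t)$ with $\norm{\nabla\oo{\nabla}u}_{L^2}^2\leq C/(T-t)^2$, as in your closing paragraph, is not a legitimate H\"older estimate for $\int_X|\Ric|\,|\nabla\oo{\nabla}u|^2$. The correct fix, and what the paper does, is Young's inequality
\begin{equation*}
    2|\Delta u|\,|\Ric|\,|\nabla\oo{\nabla}u|\leq (\Delta u)^2|\nabla\oo{\nabla}u|^2+|\Ric|^2,
\end{equation*}
after which the genuine pointwise bound $(\Delta u)^2\leq C/(T-t)^2$ from Theorem \ref{bounds} together with $\int_X|\nabla\oo{\nabla}u|^2\leq C/(T-t)^2$ from Proposition \ref{L2_mixed_hessian_bound} controls the first piece by $C/(T-t)^4$, and $\int_X|\Ric|^2\leq C\int_X|\Rm|^2\leq C/(T-t)^2$ controls the second. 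The total error is therefore $O((T-t)^{-4})$ --- not $O((T-t)^{-2})$ as you assert mid-proof, nor the $O((T-t)^{-3})$ you gesture at in the last paragraph --- and it is exactly this $(T-t)^{-4}$ that the weight $(T-t)^4$ is chosen to cancel. With that substitution the remainder of your argument goes through verbatim.
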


\begin{proof}
Recall the evolution equation for $\Delta u$,
\begin{equation*}
    \parabolic{} \Delta u = \Delta u + \ip{\Ric, i\d\dbar u} + \Delta \tr_\omega (f^* \omega_Y)
\end{equation*}
from which one can calculate
\begin{equation*}
    \parabolic{} (\Delta u)^2 = 2 (\Delta u)^2 + 2 \Delta u \ip{\Ric, i\d\dbar u} + 2 \Delta u \Delta \tr_\omega( f^*\omega_Y) - 2 |\nabla \Delta u|^2.
\end{equation*}
Therefore,
\begin{align*}
    \int_X 2|\nabla \Delta u|^2 &= \int_X 2(\Delta u)^2 + 2\Delta u \ip{\Ric, i\d\dbar u} + 2\Delta u \Delta \tr_\omega (f^* \omega_Y) - \frac{\d}{\d t} (\Delta u)^2\\
    &\leq \int_X 2 (\Delta u)^2+2|\Delta u| |\Ric||\nabla \oo{\nabla} u| - 2\nabla_i \Delta u \cdot \nabla_{\oo{i}} \tr_\omega (f^* \omega_Y) - \frac{\d}{\d t} (\Delta u)^2 \\
    &\leq \int_X 2(\Delta u)^2+(\Delta u)^2|\nabla \oo{\nabla} u|^2 + |\Ric|^2 + |\nabla \Delta u|^2 + |\nabla \tr_\omega (f^*\omega_Y)|^2 - \frac{\d}{\d t} (\Delta u)^2.
\end{align*}
and so
\begin{align*}
    \int_X |\nabla \Delta u|^2 \leq \frac{C}{(T-t)^4} - \int_X \frac{\d}{\d t} (\Delta u)^2.
\end{align*}
Note that
\begin{align*}
    \frac{d}{dt} \int_X (\Delta u)^2 \omega^n &= \int_X \frac{\d}{\d t} (\Delta u)^2 \omega^n + \int_X (\Delta u)^2 (-R-n) \omega^n\\
    &\leq \int_X \frac{\d}{\d t} (\Delta u)^2 \omega^n + \frac{C}{(T-t)^2}
\end{align*}
so that one has
\begin{align*}
    \int_X |\nabla \Delta u|^2 \leq \frac{C}{(T-t)^4} - \frac{d}{dt} \int_X (\Delta u)^2
\end{align*}
and hence
\begin{align*}
    \int_X (T-t)^4 |\nabla \Delta u|^2 \leq C-\frac{d}{d t} \int_X (T-t)^4 (\Delta u)^2. 
\end{align*}
Integrating from $0$ to $\tau\in [0,T)$ gives
\begin{align*}
    \int_0^\tau \int_X (T-t)^4 |\nabla \Delta u(t)|^2 \omega(t)^n dt &\leq C\tau - \int_X (T-\tau)^4 (\Delta u(\tau))^2 \omega(\tau)^n + \int_X T^4 (\Delta u(0))^2 \omega_0^n \\
    &\leq CT + C\\
    &\leq C.
\end{align*}
The result follows as $C$ is independent of $\tau$.
\end{proof}

\noindent Note that one does not have to pull the factor of $(T-t)^4$ inside the integral but doing so yields a more pleasant $L^4$-like estimate on the Ricci curvature and estimates for some other quantities. Otherwise, we find there exists a $C > 0$ such that for all $\tau \in [0,T)$ we have
\begin{equation}\label{alternate_bound}
    \int_0^\tau \int_X |\nabla \Delta u|^2 \omega^n dt \leq \frac{C}{(T-\tau)^3}.
\end{equation}
The following result partially completes Theorem \ref{main_theorem}.

\begin{corollary}
There exists a constant $C > 0$ such that
\begin{equation*}
    \lim_{\tau\to T} \int_0^\tau \int_X (T-t)^6 |\nabla R|^2 \omega(t)^n dt \leq C.
\end{equation*}
\end{corollary}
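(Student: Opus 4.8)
The plan is to read off $\nabla R$ directly from the scalar curvature decomposition (\ref{scalar_formula}) and then dominate the two resulting terms by quantities already controlled. Rewriting (\ref{scalar_formula}) as
\begin{equation*}
    R = \frac{ne^{t-T} - \tr_\omega(f^*\omega_Y) - \Delta u}{1-e^{t-T}},
\end{equation*}
and observing that $e^{t-T}$ and $1-e^{t-T}$ are constant in the space variables, applying $\nabla$ gives
\begin{equation*}
    \nabla R = \frac{-\nabla \tr_\omega(f^*\omega_Y) - \nabla \Delta u}{1-e^{t-T}}.
\end{equation*}
The first step is the elementary inequality $1-e^{t-T} \geq \frac{1-e^{-T}}{T}(T-t) =: c_0 (T-t)$ for $t \in [0,T)$, which comes from the fact that $s \mapsto \frac{1-e^{-s}}{s}$ is decreasing on $(0,T]$. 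Combining these yields the pointwise bound
\begin{equation*}
    (T-t)^6 |\nabla R|^2 \leq \frac{2(T-t)^6}{(1-e^{t-T})^2}\left(|\nabla \tr_\omega(f^*\omega_Y)|^2 + |\nabla \Delta u|^2\right) \leq C (T-t)^4 \left(|\nabla \tr_\omega(f^*\omega_Y)|^2 + |\nabla \Delta u|^2\right).
\end{equation*}

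The second step is to integrate this over $X \times [0,\tau]$ and treat the two terms separately. The term involving $|\nabla \Delta u|^2$ is handled immediately by the preceding Proposition, which gives $\lim_{\tau \to T}\int_0^\tau \int_X (T-t)^4 |\nabla \Delta u|^2 \omega^n\, dt \leq C$. For the term involving the trace, Proposition \ref{gradient_trace_bound} gives $\int_X |\nabla \tr_\omega(f^*\omega_Y)|^2 \omega^n \leq \frac{C}{T-t}$, so
\begin{equation*}
    \int_0^\tau \int_X (T-t)^4 |\nabla \tr_\omega(f^*\omega_Y)|^2 \omega^n\, dt \leq C\int_0^\tau (T-t)^3\, dt \leq \frac{C\,T^4}{4},
\end{equation*}
which is finite since $T < \infty$ and is independent of $\tau$. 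Adding the two contributions and taking $\tau \to T$ gives the claim.

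I do not expect a genuine obstacle here; the only point requiring care is the bookkeeping around the coefficient $1-e^{t-T}$, and the verification that the power $6$ is the correct (and essentially sharp) choice: after cancelling the $(T-t)^{-2}$ from the squared denominator one is left with exactly the weight $(T-t)^4$ for which $|\nabla\Delta u|^2$ is integrable in space-time, while the trace term is then absorbed with room to spare. One could alternatively keep the weight outside the integral, paralleling the remark after the previous Proposition, but pulling it in produces the cleaner statement recorded in Theorem \ref{main_theorem}.
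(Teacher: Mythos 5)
Your proposal is correct and follows essentially the same route as the paper: differentiate the decomposition (\ref{scalar_formula}), use $1-e^{t-T}\geq c(T-t)$ to trade the coefficient for a power of $T-t$, and then control the two resulting terms via Proposition \ref{gradient_trace_bound} and the space-time bound on $(T-t)^4|\nabla\Delta u|^2$. The only difference is that you spell out the elementary lower bound on $1-e^{t-T}$, which the paper leaves implicit.
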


\begin{proof}
Recall equation (\ref{scalar_formula}). From this we can see
\begin{equation*}
    (1-e^{t-T}) \nabla R = -\nabla \tr_\omega(f^*\omega_Y) - \nabla \Delta u
\end{equation*}
and thus
\begin{equation*}
    |\nabla R|^2 \leq \frac{C}{(T-t)^2}(|\nabla \tr_\omega(f^*\omega_Y)|^2 + |\nabla \Delta u|^2).
\end{equation*}
It follows that 
\begin{equation*}
    \int_X |\nabla R|^2 \leq \frac{C}{(T-t)^3} + \frac{C}{(T-t)^2} \int_X |\nabla \Delta u|^2.
\end{equation*}
Therefore there exists a constant $C > 0$ such that for any $\tau \in [0,T)$ one has
\begin{equation*}
    \int_0^\tau \int_X (T-t)^6|\nabla R|^2 \leq C
\end{equation*}
and the result follows.
\end{proof}

\subsection{The $L^4$-Behaviour of the Ricci Curvature}

\noindent Take note of the following general estimates, which appear in \cite[Lem. 4.3]{TZ16}, \cite[Lem. 3.5]{TZ15}, but are now appropriately adapted to our situation.

\begin{proposition}
Let $v: X \to \R$ be an arbitrary, smooth, real function. Then the following integral estimates hold
\begin{align*}
     \int_X (|\nabla \nabla v|^2 &+ |\nabla \oo{\nabla} v|^2)|\nabla \oo{\nabla} v|^2 \omega^n\\ 
     &\leq 24 \int_X (|\nabla v|^2 |\nabla \nabla \oo{\nabla} v|^2 + (\Delta v)^2 |\nabla \oo{\nabla} v|^2 +|\nabla v|^4 |\Ric|^2) \omega^n,\\
    \int_X (|\nabla \nabla v|^2 &+ |\nabla \oo{\nabla} v|^2)|\nabla \nabla v|^2 \omega^n\\
    &\leq 16\int_X (|\nabla v|^2 (|\nabla \nabla \nabla v|^2 + |\oo{\nabla} \nabla \nabla v|^2) + (\Delta v)^2 |\nabla \nabla v|^2 + |\nabla v|^4|\Ric|^2)\omega^n.
\end{align*}
\end{proposition}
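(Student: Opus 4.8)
The plan is to prove both inequalities by integration by parts, trading two derivatives off one of the Hessian factors onto the $(1,0)$-form $\nabla v$, and then closing the estimate with the Cauchy--Schwarz and Young inequalities; this is the scheme of \cite[Lem.~3.5]{TZ15} and \cite[Lem.~4.3]{TZ16}, and the only new point is that nothing in the argument needs more than the Chern geometry of a fixed K\"ahler metric. Throughout one works in normal coordinates at a point, so covariant derivatives may be integrated by parts via $\int_X \nabla_i F^i\,\omega^n = 0$, and one uses repeatedly that the Chern curvature is of type $(1,1)$, so that $[\nabla_i,\nabla_j] = [\nabla_{\oo i},\nabla_{\oo j}] = 0$ on tensors while only $[\nabla_i,\nabla_{\oo j}]$ contributes curvature.

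For the first inequality I would split the left-hand side as $\int_X |\nabla\nabla v|^2|\nabla\oo{\nabla} v|^2 + \int_X |\nabla\oo{\nabla} v|^4$ and treat each summand. Writing one Hessian factor as a covariant derivative of $\nabla v$ ($v_{ij} = \nabla_i v_j$ in the first summand, $v_{i\oo j} = \nabla_i v_{\oo j}$ in the second) and integrating by parts once, the product rule produces three kinds of terms. First, a ``trace term'' in which the loose derivative meets the conjugate index of its own factor and the contraction collapses: for $\int_X |\nabla\oo{\nabla} v|^4$ one uses $\sum_i \nabla_i v_{\oo i j} = \nabla_j \Delta v$ directly, while for $\int_X |\nabla\nabla v|^2 |\nabla\oo{\nabla} v|^2$ one uses $\sum_i \nabla_i v_{\oo i\oo j} = \nabla_{\oo j}\Delta v + (\Ric * \nabla v)_{\oo j}$, the curvature piece being exactly the traced Ricci tensor; a second integration by parts turns the $\nabla\Delta v$ contribution into $\int_X (\Delta v)^2 |\nabla\oo{\nabla} v|^2$, a right-hand term, plus a remainder bounded by Cauchy--Schwarz by $\tfrac12\int_X (\Delta v)^2|\nabla\oo{\nabla} v|^2 + \tfrac12 \int_X |\nabla v|^2 |\nabla\nabla\oo{\nabla} v|^2$. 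Second, genuine third-order cross terms of the shape $\int_X \nabla v \cdot \nabla\oo{\nabla} v \cdot \nabla\nabla\oo{\nabla} v \cdot \nabla\oo{\nabla} v$; using only the symmetry $v_{\oo k l} = v_{l\oo k}$ of the complex Hessian of a function and the vanishing of $[\nabla_i,\nabla_l]$, every third covariant derivative occurring here rewrites as a first derivative of the mixed Hessian, so the term is at most $\big(\int_X |\nabla v|^2 |\nabla\nabla\oo{\nabla} v|^2\big)^{1/2}$ times the square root of a piece of the left-hand side, and Young's inequality then absorbs an $\e$-multiple of the left-hand side. Third, a curvature term bounded by $\int_X |\Ric|\,|\nabla v|^2|\nabla\oo{\nabla} v|^2$, which Young's inequality bounds by $\e\int_X |\nabla\oo{\nabla} v|^4 + C_\e \int_X |\Ric|^2|\nabla v|^4$. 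Adding the two summands, absorbing the $\e$-multiples of the left-hand side, and counting coefficients would give the constant $24$.

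The second inequality I would handle the same way after writing its left-hand side as $\int_X |\nabla\nabla v|^4 + \int_X |\nabla\nabla v|^2|\nabla\oo{\nabla} v|^2$. The only structural difference is that moving a covariant derivative onto an unmixed Hessian $v_{ij}$ produces a purely holomorphic third derivative, contributing $|\nabla\nabla\nabla v|^2$, while the commutator generated when a derivative moves onto $v_{\oo i\oo j}$ produces the third derivative $|\oo{\nabla}\nabla\nabla v|^2$; this is precisely why the right-hand side of the second inequality carries $|\nabla\nabla\nabla v|^2 + |\oo{\nabla}\nabla\nabla v|^2$ rather than $|\nabla\nabla\oo{\nabla} v|^2$. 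For the common summand $\int_X |\nabla\nabla v|^2|\nabla\oo{\nabla} v|^2$ one should integrate by parts off the mixed Hessian factor (rather than off $v_{ij}$, as in the first inequality), which produces the trace term $\int_X (\Delta v)^2|\nabla\nabla v|^2$ in place of $\int_X (\Delta v)^2|\nabla\oo{\nabla} v|^2$.

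The hard part will be the bookkeeping rather than any individual estimate: one must sort every third covariant derivative into the particular norms allowed on the right-hand side and, above all, check that curvature always enters through the traced Ricci tensor and never as the full $\Rm$, which is where the fourth-order commutation identities of the appendix come in (and where, should a raw Riemann contribution threaten to appear, a further integration by parts is needed to trace it away). A secondary point of care is the choice of the direction of each integration by parts, which must be arranged so that the only quantity ever absorbed into the left-hand side is a small multiple of that same inequality's left-hand side, never of its companion --- only then can the final constants be pinned as $24$ and $16$.
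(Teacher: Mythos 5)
Your plan is essentially the paper's proof: the paper simply packages your first round of integrations by parts and commutations as the Bochner formula $\Delta|\nabla v|^2 = |\nabla\nabla v|^2 + |\nabla\oo{\nabla} v|^2 + \ip{\nabla\Delta v,\oo{\nabla} v} + \ip{\nabla v,\oo{\nabla}\Delta v} + \ip{\Ric,\oo{\nabla} v\otimes\nabla v}$, multiplies it by $|\nabla\oo{\nabla} v|^2$ (resp.\ $|\nabla\nabla v|^2$), and then carries out exactly the second integration by parts, the conversion of $\nabla\Delta v$ into $(\Delta v)^2$, the Ricci-only commutator bookkeeping, and the Young absorption of $\tfrac12+\tfrac14$ of the left-hand side that you describe. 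The only loose end is that you assert rather than track the constants $24$ and $16$; they fall out of the paper's absorption scheme and are immaterial to the applications.
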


\begin{proof}
By the Bochner formula, (\ref{Bochner_formula}),
\begin{align*}
    \int_X (|\nabla \nabla v|^2 &+ |\nabla \oo{\nabla} v|^2) |\nabla \oo{\nabla} v|^2\\
    &= \int_X (\Delta |\nabla v|^2 - \ip{\nabla \Delta v, \oo{\nabla} v} - \ip{\nabla v, \oo{\nabla} \Delta v} - \ip{\Ric, \oo{\nabla} v \otimes \nabla v}) |\nabla \oo{\nabla} v|^2.
\end{align*}
Using integration by parts for the first term gives
\begin{align*}
    \int_X \Delta |\nabla v|^2 \cdot |\nabla \oo{\nabla} v|^2 &= -\int_X \nabla_{\oo{i}} |\nabla v|^2 \cdot \nabla_i |\nabla \oo{\nabla} v|^2\\
    &=  - \int_X (\nabla_{\oo{i}} \nabla_j v \cdot \nabla_{\oo{j}} v + \nabla_j v \cdot \nabla_{\oo{i}} \nabla_{\oo{j}} v)\\
    &\quad\quad\quad\quad(\nabla_i \nabla_k \nabla_{\oo{l}} v \cdot \nabla_{\oo{k}} \nabla_l v + \nabla_k \nabla_{\oo{l}}v \cdot \nabla_i \nabla_{\oo{k}} \nabla_l v)\\
    &\leq \int_X (|\nabla \oo{\nabla} v| |\nabla v| + |\nabla v| |\nabla \nabla v|)(|\nabla \nabla \oo{\nabla} v| |\nabla \oo{\nabla} v| + |\nabla \oo{\nabla} v| |\nabla \nabla \oo{\nabla} v|)\\
    &\leq \int_X \frac{1}{2}(|\nabla \nabla v|^2 + |\nabla \oo{\nabla} v|^2) |\nabla \oo{\nabla} v|^2 + 4 |\nabla v|^2 |\nabla \nabla \oo{\nabla} v|^2.
\end{align*}
Integration by parts for the second term yields
{\allowdisplaybreaks
\begin{align*}
    -\int_X \ip{\nabla \Delta v, \oo{\nabla} v} |\nabla \oo{\nabla} v|^2 &= \int_X \Delta v (\nabla_k \nabla_{\oo{k}} v \cdot |\nabla \oo{\nabla} v|^2 + \nabla_{\oo{k}} v \cdot \nabla_k |\nabla \oo{\nabla} v|^2)\\
    &= \int_X (\Delta v)^2 |\nabla \oo{\nabla} v|^2 + \Delta v \cdot \nabla_{\oo{k}} v \cdot\\
    &\quad\quad\quad(\nabla_k \nabla_i \nabla_{\oo{j}} v \cdot \nabla_{\oo{i}} \nabla_j v + \nabla_i \nabla_{\oo{j}} v \cdot \nabla_k \nabla_{\oo{i}} \nabla_j v)\\
    &\leq \int_X (\Delta v)^2 |\nabla\oo{\nabla} v|^2 + |\Delta v| |\nabla v| (|\nabla \nabla \oo{\nabla} v||\nabla \oo{\nabla} v| + |\nabla \oo{\nabla} v| |\nabla \nabla \oo{\nabla} v|)\\
    &\leq \int_X 2(\Delta v)^2 |\nabla \oo{\nabla} v|^2 + |\nabla v|^2 |\nabla\nabla \oo{\nabla} v|^2. 
\end{align*}
}
Similarly,
\begin{align*}
    -\int_X \ip{\nabla v, \oo{\nabla} \Delta v} |\nabla \oo{\nabla} v|^2 \leq \int_X 2(\Delta v)^2 |\nabla \oo{\nabla} v|^2 + |\nabla v|^2 |\nabla\nabla \oo{\nabla} v|^2.
\end{align*}
The final term can be estimated by
\begin{align*}
    -\int_X \ip{\Ric, \oo{\nabla} v \otimes \nabla v} |\nabla \oo{\nabla} v|^2 &\leq \int_X |\Ric| |\nabla v|^2 |\nabla \oo{\nabla} v|^2\\
    &\leq \int_X |\Ric|^2|\nabla v|^4 + \frac{1}{4} |\nabla \oo{\nabla} v|^4.
\end{align*}
Combining these estimates yields the first inequality.\\
For the second we proceed in the same way. The Bochner formula, (\ref{Bochner_formula}), gives
\begin{align*}
    \int_X (|\nabla \nabla v|^2 &+ |\nabla \oo{\nabla} v|^2) |\nabla \nabla v|^2\\
    &= \int_X (\Delta |\nabla v|^2 - \ip{\nabla \Delta v, \oo{\nabla} v} - \ip{\nabla v, \oo{\nabla} \Delta v} - \ip{\Ric, \oo{\nabla} v\otimes \nabla v}) |\nabla \nabla v|^2.
\end{align*}
By integrating the first term by parts,
\begin{align*}
    \int_X \Delta |\nabla v|^2 \cdot |\nabla \nabla v|^2 &= -\int_X \nabla_{\oo{i}} |\nabla v|^2 \cdot \nabla_i|\nabla \nabla v|^2\\
    &= - \int_X (\nabla_{\oo{i}} \nabla_j v \cdot \nabla_{\oo{j}} v + \nabla_j v \cdot \nabla_{\oo{i}} \nabla_{\oo{j}} v)\\
    &\quad\quad\quad\quad (\nabla_i \nabla_k \nabla_l v\cdot \nabla_{\oo{k}}\nabla_{\oo{l}} v + \nabla_k \nabla_l v \cdot \nabla_i \nabla_{\oo{k}} \nabla_{\oo{l}} v)\\
    &\leq \int_X (|\nabla \oo{\nabla} v||\nabla v| + |\nabla v||\nabla \nabla v|)(|\nabla \nabla \nabla v| |\nabla\nabla v| + |\nabla \nabla v| |\oo{\nabla} \nabla \nabla v|)\\
    &\leq \int_X \frac{1}{2} (|\nabla \nabla v|^2 + |\nabla \oo{\nabla} v|^2)|\nabla \nabla v|^2 + 2|\nabla v|^2 (|\nabla\nabla \nabla v|^2 + |\oo{\nabla} \nabla \nabla v|^2).
\end{align*}
For the second term we find
{\allowdisplaybreaks
\begin{align*}
    -\int_X \ip{\nabla \Delta v, \oo{\nabla} v} |\nabla \nabla v|^2 &= \int_X \Delta v (\nabla_k \nabla_{\oo{k}} v \cdot |\nabla \nabla v|^2 + \nabla_{\oo{k}} v \cdot \nabla_k |\nabla \nabla v|^2)\\
    &= \int_X (\Delta v)^2 |\nabla \nabla v|^2 + \Delta v \cdot \nabla_{\oo{k}} v \cdot\\
    &\quad\quad\quad (\nabla_k \nabla_i \nabla_j v \cdot \nabla_{\oo{i}} \nabla_{\oo{j}} v + \nabla_i \nabla_j v \cdot \nabla_k \nabla_{\oo{i}} \nabla_{\oo{j}} v)\\
    &\leq \int_X (\Delta v)^2 |\nabla \nabla v|^2 + |\Delta v| |\nabla v| (|\nabla \nabla \nabla v| |\nabla \nabla v|+|\nabla \nabla v||\oo{\nabla}\nabla \nabla v|)\\
    &\leq \int_X 2(\Delta v)^2 |\nabla \nabla v|^2 + \frac{1}{2}|\nabla v|^2 (|\nabla \nabla \nabla v|^2 + |\oo{\nabla} \nabla \nabla v|^2).
\end{align*}
}
Similarly,
\begin{equation*}
    -\int_X \ip{\nabla v, \oo{\nabla}\Delta v} |\nabla \nabla v|^2 \leq \int_X 2(\Delta v)^2 |\nabla \nabla v|^2 + \frac{1}{2}|\nabla v|^2 (|\nabla \nabla \nabla v|^2 + |\oo{\nabla} \nabla \nabla v|^2).
\end{equation*}
Finally,
\begin{align*}
    -\int_X \ip{\Ric, \oo{\nabla} v \otimes \nabla v} |\nabla \nabla v|^2 &\leq \int_X |\Ric| |\nabla v|^2 |\nabla \nabla v|^2\\
    &\leq \int_X |\Ric|^2 |\nabla v|^4 + \frac{1}{4}|\nabla \nabla v|^4.
\end{align*}
The second inequality follows.
\end{proof}

\noindent Additionally, the following general estimate appears in \cite[Lem. 4.4]{TZ16}, \cite[Lem. 3.5]{TZ15}, and has been adapted to our situation.

\begin{proposition}\label{triple_nabla_bounds}
Let $v: X \times [0,T) \to \R$ be an arbitrary, smooth, real function and assume there exists there exists a constant $C' >0 $ such that $|\nabla v| \leq C'$ on $X \times [0,T)$. Then there exists a constant $C(n, C') > 0$ such that the following integral estimates hold
\begin{align*}
    \int_X |\nabla \nabla \oo{\nabla} v|^2 \omega^n &\leq C\int_X (|\nabla \Delta v|^2 + |\Rm|^2 + (\Delta v)^2 |\nabla \oo{\nabla} v|^2 )\omega^n,\\
    \int_X (|\nabla \nabla \nabla v|^2 + |\oo{\nabla} \nabla \nabla v|^2) \omega^n &\leq C \int_X (|\nabla \Delta v|^2 + |\Rm|^2 + (1+(\Delta v)^2)|\nabla \nabla v|^2 + |\nabla \Ric|^2)\omega^n.
\end{align*}
\end{proposition}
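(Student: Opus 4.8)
The plan is to establish both inequalities by a single mechanism: expand the left-hand side in local normal coordinates, integrate by parts to manufacture a contracted (Laplacian-type) derivative, use the fourth-order commutation relations of the appendix to push that Laplacian onto $v$ itself, and then control the curvature error integrals by Young's inequality -- exploiting the hypothesis $|\nabla v|\le C'$ and the preceding Proposition to dispose of the quartic Hessian terms that arise.

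For the first estimate, write
\[
\int_X |\nabla\nabla\oo\nabla v|^2\,\omega^n = \int_X \nabla_i\nabla_j\nabla_{\oo k}v\cdot\nabla_{\oo i}\nabla_{\oo j}\nabla_k v\,\omega^n
\]
in normal coordinates. Since $v$ is a scalar, $\nabla_j\nabla_{\oo k}v=\nabla_{\oo k}\nabla_j v$, and in the K\"ahler setting the $(2,0)$-part of the curvature vanishes, so the third derivative is symmetric in its two holomorphic slots. Integrating by parts in $\oo i$ gives $-\int_X \nabla_{\oo i}\nabla_i\nabla_j\nabla_{\oo k}v\cdot\nabla_{\oo j}\nabla_k v$; commuting the pair $\nabla_{\oo i}\nabla_i$ inward through $\nabla_j$ and $\nabla_{\oo k}$ replaces it, modulo curvature, by $\nabla_j\nabla_{\oo k}\Delta v$, the errors being schematically of the form $\Rm\ast\nabla\nabla v$, $\Rm\ast\nabla\oo\nabla v$, $\Rm\ast\nabla v$ and $\nabla\Rm\ast\nabla v$, where a contracted $\nabla\Rm$ is a covariant derivative of $\Ric$ by the Bianchi identities. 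A further integration by parts on the main term $-\int_X \nabla_j\nabla_{\oo k}v\cdot\nabla_j\nabla_{\oo k}\Delta v$ yields $\int_X|\nabla\Delta v|^2$ plus curvature errors of the same type. It then remains to bound these errors: terms of the form $\int_X|\Rm|\,|\nabla v|\,|\nabla\nabla\oo\nabla v|$, and -- after integrating the Bianchi term by parts -- $\int_X|\Ric|\,|\nabla v|\,|\nabla\nabla\oo\nabla v|$, are absorbed via $\e\int_X|\nabla\nabla\oo\nabla v|^2 + C\int_X|\Rm|^2$ using $|\nabla v|\le C'$; terms of the form $\int_X|\Rm|\,|\nabla\nabla v|\,|\nabla\oo\nabla v|$ generate, after Young, the quartic expression $\int_X(|\nabla\nabla v|^2+|\nabla\oo\nabla v|^2)|\nabla\oo\nabla v|^2$, which by the preceding Proposition is dominated by $\int_X(|\nabla v|^2|\nabla\nabla\oo\nabla v|^2 + (\Delta v)^2|\nabla\oo\nabla v|^2 + |\nabla v|^4|\Ric|^2)$, i.e., again by $|\nabla v|\le C'$, by an absorbable multiple of $\int_X|\nabla\nabla\oo\nabla v|^2$ together with the admissible terms $\int_X(\Delta v)^2|\nabla\oo\nabla v|^2$ and $\int_X|\Rm|^2$. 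Choosing $\e$ small and moving all $\int_X|\nabla\nabla\oo\nabla v|^2$ contributions to the left yields the first inequality.

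The second inequality is obtained the same way. Starting from $\int_X|\nabla\nabla\nabla v|^2$, integrate by parts and commute a contracted derivative inward to produce the main term $\int_X|\nabla\Delta v|^2$; this time the Bianchi errors are retained as the admissible term $\int_X|\nabla\Ric|^2$. The quartic expression that now appears is $\int_X(|\nabla\nabla v|^2+|\nabla\oo\nabla v|^2)|\nabla\nabla v|^2$, which the second half of the preceding Proposition controls by $\int_X(|\nabla v|^2(|\nabla\nabla\nabla v|^2+|\oo\nabla\nabla\nabla v|^2) + (\Delta v)^2|\nabla\nabla v|^2 + |\nabla v|^4|\Ric|^2)$; invoking $|\nabla v|\le C'$ this supplies an absorbable multiple of $\int_X(|\nabla\nabla\nabla v|^2+|\oo\nabla\nabla\nabla v|^2)$, the admissible terms $\int_X(\Delta v)^2|\nabla\nabla v|^2$ and $\int_X|\Rm|^2$, and -- from $|\nabla v|^2|\nabla\nabla v|^2\le C'^2|\nabla\nabla v|^2$ -- the term $\int_X|\nabla\nabla v|^2$, which together with the previous one accounts for the factor $1+(\Delta v)^2$. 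The integral $\int_X|\oo\nabla\nabla\nabla v|^2$ is handled analogously, after first commuting $\nabla_{\oo i}$ past the holomorphic derivatives, which costs only terms $\Rm\ast\nabla v$ that $|\nabla v|\le C'$ turns into multiples of $\int_X|\Rm|^2$.

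The main obstacle is the bookkeeping of the commutator terms: the fourth-order commutation relations produce a long list of curvature contractions, and one must verify term by term that each is either directly absorbable into the leading third-derivative integral or reducible -- via the Bianchi identities or the preceding Proposition -- to one of the admissible quantities on the right-hand side. In particular one must confirm that no term such as $|\Rm|^2|\nabla\nabla v|^2$ with an uncontrolled Hessian factor survives, and that every absorbed top-order piece carries a coefficient that can be made arbitrarily small in the Young's inequality split, so that the absorption is legitimate.
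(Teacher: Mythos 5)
Your proposal is correct and follows essentially the same route as the paper: integrate by parts, apply the fourth-order commutation relations to produce $|\nabla\Delta v|^2$ as the main term, bound the curvature contractions by Young's inequality, and absorb the resulting quartic Hessian integrals using the preceding Proposition together with $|\nabla v|\le C'$. The only cosmetic difference is that in the paper's first commutation relation no $\nabla\Rm\ast\nabla v$ term actually arises (it only appears, as $\nabla_j R_{k\oo i}\nabla_i v$, in the second), but your treatment of it is harmless and the absorption scheme is exactly the one used.
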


\begin{proof}
Observe that
{\allowdisplaybreaks
\begin{align*}
    \int_X |\nabla \nabla \oo{\nabla} v|^2 &= \int_X \nabla_i \nabla_j \nabla_{\oo{k}} v \cdot \nabla_{\oo{i}} \nabla_{\oo{j}} \nabla_{k} v\\
    &=- \int_X \nabla_j \nabla_{\oo{k}} v \cdot \nabla_i \nabla_{\oo{i}} \nabla_{\oo{j}} \nabla_{k} v \\
    &=-\int_X \nabla_j \nabla_{\oo{k}} v (\nabla_{\oo{j}} \nabla_k \Delta v + R_{i \oo{j}} \nabla_{\oo{i}} \nabla_k v + R_{k\oo{j}l\oo{i}} \nabla_i \nabla_{\oo{l}} v)\\
    &= \int_X \nabla_{\oo{j}}\nabla_j \nabla_{\oo{k}} v \cdot \nabla_k \Delta v - R_{i\oo{j}} \nabla_j\nabla_{\oo{k}} v\cdot \nabla_{\oo{i}} \nabla_k v - R_{k\oo{j}l\oo{i}}\nabla_j \nabla_{\oo{k}} v \cdot \nabla_i \nabla_{\oo{l}} v\\
    &\leq \int_X |\nabla \Delta v|^2 + C|\Rm| |\nabla \oo{\nabla} v|^2\\
    &\leq \int_X |\nabla \Delta v|^2 + C|\Rm|^2 + \frac{1}{C'}|\nabla \oo{\nabla} v|^4
\end{align*}
}
where we have used a commutation relation from Lemma \ref{commutation_relations} in the second step and $C'$ is to be determined. Using the previous result in combination with the assumption of a $C^0$-bound on $|\nabla v|$ gives
\begin{align*}
    \int_X |\nabla \nabla \oo{\nabla} v|^2 &\leq \int_X |\nabla \Delta v|^2 + C|\Rm|^2\\
    &+ \int_X \frac{24}{C'}|\nabla v|^2|\nabla \nabla \oo{\nabla} v|^2 + \frac{24}{C'}(\Delta v)^2 |\nabla \oo{\nabla} v|^2 + \frac{24}{C'} |\nabla v|^4 |\Ric|^2\\
    &\leq \int_X |\nabla \Delta v|^2 + C|\Rm|^2 + \frac{24C''}{C'} |\nabla \nabla \oo{\nabla} v|^2 + C (\Delta v)^2|\nabla \oo{\nabla} v|^2.
\end{align*}
We now choose $C'$ sufficiently large so that $\frac{24C''}{C'} \leq \frac{1}{2}$ and the result follows. Next,
{\allowdisplaybreaks
\begin{align*}
    \int_X |\nabla \nabla \nabla v|^2 &= -\int_X \nabla_{\oo{i}} \nabla_i \nabla_j \nabla_k v \cdot \nabla_{\oo{j}} \nabla_{\oo{k}} v\\
    &= -\int_X \nabla_{\oo{j}} \nabla_{\oo{k}} v (\nabla_j \nabla_k \Delta v + R_{j \oo{i}} \nabla_i \nabla_{k} v + R_{k\oo{i}}\nabla_j\nabla_{i} v\\
    &\quad\quad\quad\quad+ R_{j\oo{l}k\oo{i}} \nabla_i \nabla_l v +\nabla_j R_{k \oo{i}} \nabla_i v)\\
    &\leq \int_X \nabla_j \nabla_{\oo{j}} \nabla_{\oo{k}} v \cdot \nabla_k \Delta v + |\Ric||\nabla \nabla v|^2 + |\Ric||\nabla \nabla v|^2 + |\Rm||\nabla \nabla v|^2\\
    &+ \int_X |\nabla \Ric||\nabla v| |\nabla \nabla v|\\
    &\leq \int_X |\nabla \Delta v|^2 + |\Ric| |\nabla v| |\nabla \Delta v| + C|\Rm|^2 + \frac{1}{C'}|\nabla \nabla v|^4\\
    &+\int_X \frac{1}{2}|\nabla \Ric|^2 + \frac{1}{2}|\nabla v|^2 |\nabla \nabla v|^2\\
    &\leq \int_X \frac{3}{2}|\nabla \Delta v|^2 + \frac{1}{2} |\nabla v|^2 |\Ric|^2 + C|\Rm|^2 + \frac{1}{2}|\nabla\Ric|^2 + \frac{1}{2} |\nabla v|^2 |\nabla \nabla v|^2\\
    &+ \int_X \frac{1}{C'}|\nabla \nabla v|^4
\end{align*}
}
where  we have used a commutation relation from Lemma \ref{commutation_relations} in the second step and $C'$ is to be determined. Similarly, one finds that
\begin{align*}
    \int_X |\oo{\nabla} \nabla \nabla v|^2 &\leq \int_X \frac{3}{2}|\nabla \Delta v|^2 + \frac{1}{2} |\nabla v|^2 |\Ric|^2 + C|\Rm|^2 + \frac{1}{2}|\nabla\Ric|^2 + \frac{1}{2} |\nabla v|^2 |\nabla \nabla v|^2\\
    &+ \int_X \frac{1}{C'}|\nabla \nabla v|^4
\end{align*}
for the same $C'$. Combining these estimates and using the previous result again in combination with the $C^0$-bound on $|\nabla v|$ gives
\begin{align*}
    \int_X |\nabla \nabla \nabla v|^2 + |\oo{\nabla} \nabla\nabla v|^2 &\leq \int_X 3|\nabla \Delta v|^2 + C|\Rm|^2+ |\nabla \Ric|^2 + C|\nabla \nabla v|^2\\
    &+ \int_X \frac{32}{C'} |\nabla v|^2 (|\nabla \nabla \nabla v|^2 + |\oo{\nabla} \nabla \nabla v|^2)\\
    &+ \int_X \frac{32}{C'}(\Delta v)^2 |\nabla \nabla v|^2 + \frac{32}{C'} |\nabla v|^4|\Ric|^2\\
    &\leq \int_X 3|\nabla \Delta v|^2 + C|\Rm|^2+ |\nabla \Ric|^2 + C|\nabla \nabla v|^2\\
    &+ \int_X \frac{32C''}{C'} (|\nabla \nabla \nabla v|^2 + |\oo{\nabla} \nabla \nabla v|^2)+ C(\Delta v)^2 |\nabla \nabla v|^2.
\end{align*}
Now choose $C'$ sufficiently large so that $\frac{32C''}{C'} \leq \frac{1}{2}$ and the result is achieved.
\end{proof}

\noindent The $L^4$-like behaviour of the Ricci curvature immediately follows from the next result.

\begin{proposition}
There exists a constant $C > 0$ such that
\begin{equation}\label{mixed_hessian_bound}
    \lim_{\tau \to T} \int_{0}^{\tau} \int_X (T-t)^4(|\nabla \nabla u(t)|^2 + |\nabla \oo{\nabla} u(t)|^2)|\nabla \oo{\nabla} u(t)|^2 \omega(t)^n dt \leq C,
\end{equation}
and additionally,
\begin{align*}
    &\lim_{\tau \to T} \int_0^\tau \int_X (T-t)^4 |\nabla \nabla \oo{\nabla} u(t)|^2 \omega(t)^n dt \leq C.
\end{align*}
\end{proposition}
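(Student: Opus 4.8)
\noindent The plan is to feed $v=u$ into the two general lemmas just established — the first inequality of the (unlabelled) Proposition stated immediately before Proposition \ref{triple_nabla_bounds}, and Proposition \ref{triple_nabla_bounds} itself — which is legitimate because $|\nabla u|\le C$ on $X\times[0,T)$ by Theorem \ref{bounds}. One then controls each term that appears using the $C^0$-bounds of Theorem \ref{bounds}, the $L^2$-estimates (\ref{Rm_bound}) and Proposition \ref{L2_mixed_hessian_bound}, and the space-time estimate $\lim_{\tau\to T}\int_0^\tau\int_X(T-t)^4|\nabla\Delta u|^2\,\omega^n dt\le C$ obtained above.

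First I would apply that first general estimate with $v=u$, using $|\nabla u|\le C$, to get
\[
\int_X(|\nabla\nabla u|^2+|\nabla\oo{\nabla} u|^2)|\nabla\oo{\nabla} u|^2 \le C\int_X\Bigl(|\nabla\nabla\oo{\nabla} u|^2 + (\Delta u)^2|\nabla\oo{\nabla} u|^2 + |\Ric|^2\Bigr),
\]
and then Proposition \ref{triple_nabla_bounds}, again with $v=u$, to rewrite the leading term on the right as
\[
\int_X|\nabla\nabla\oo{\nabla} u|^2 \le C\int_X\Bigl(|\nabla\Delta u|^2 + |\Rm|^2 + (\Delta u)^2|\nabla\oo{\nabla} u|^2\Bigr).
\]
Since $|\Ric|\le C|\Rm|$ pointwise, combining the two inequalities reduces both claims to showing that, after multiplication by $(T-t)^4$ and integration in $t$ over $[0,\tau]$, the three quantities $\int_X|\nabla\Delta u|^2$, $\int_X|\Rm|^2$ and $\int_X(\Delta u)^2|\nabla\oo{\nabla} u|^2$ stay bounded uniformly in $\tau\in[0,T)$.

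These are dispatched in turn. For $\int_X|\Rm|^2$, estimate (\ref{Rm_bound}) gives $\le C(T-t)^{-2}$, so the weighted integrand is $\le C(T-t)^2$, integrable on $[0,T)$. For $\int_X(\Delta u)^2|\nabla\oo{\nabla} u|^2$, factor out $\|\Delta u\|_{L^\infty(X)}^2$, which is $\le C(T-t)^{-2}$ by the two-sided bound $-C/(T-t)\le\Delta u\le C$ of Theorem \ref{bounds}, and bound the remaining $\int_X|\nabla\oo{\nabla} u|^2$ by $C(T-t)^{-2}$ via Proposition \ref{L2_mixed_hessian_bound}; the weighted integrand is then $\le C$, integrable over the finite interval. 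For $\int_X|\nabla\Delta u|^2$, the weight $(T-t)^4$ is exactly the one in the space-time estimate quoted above, so $\int_0^\tau(T-t)^4\int_X|\nabla\Delta u|^2\,\omega^n dt\le C$ uniformly in $\tau$. Summing these contributions and letting $\tau\to T$ yields the first claimed estimate, and the second, $\lim_{\tau\to T}\int_0^\tau\int_X(T-t)^4|\nabla\nabla\oo{\nabla} u|^2\,\omega^n dt\le C$, is read off directly from Proposition \ref{triple_nabla_bounds} together with the same three term estimates.

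There is no deep obstacle beyond careful bookkeeping of powers of $(T-t)$: since $(T-t)^4\cdot(T-t)^{-2}\cdot(T-t)^{-2}=1$, the weight $(T-t)^4$ is the smallest one for which the $(\Delta u)^2|\nabla\oo{\nabla} u|^2$ term can be absorbed, and the presence of the $|\nabla\Delta u|^2$ term forces the conclusion to be an integral in $t$ rather than a bound pointwise in $t$ — which is precisely why the statement is phrased as a limit of $\int_0^\tau$.
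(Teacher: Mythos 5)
Your proposal is correct and follows essentially the same route as the paper: apply the two general lemmas with $v=u$ (licensed by $|\nabla u|\le C$ from Theorem \ref{bounds}), absorb $\int_X(\Delta u)^2|\nabla\oo{\nabla} u|^2\le C(T-t)^{-4}$ and $\int_X|\Rm|^2\le C(T-t)^{-2}$ pointwise in $t$, and use the weighted space--time bound on $\int_X|\nabla\Delta u|^2$ for the remaining term. The paper compresses this into the single chain (\ref{intermediate_L4_estimate}); your version just makes the intermediate bookkeeping explicit.
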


\begin{proof} From the two previous Propositions it is immediate that,
\begin{align}
    \int_X (|\nabla \nabla u|^2 + |\nabla \oo{\nabla} u|^2)|\nabla \oo{\nabla} u|^2 &\leq \frac{C}{(T-t)^4} + C \int_X |\nabla \nabla \oo{\nabla} u|^2 \nonumber\\
    &\leq \frac{C}{(T-t)^4} +C\int_X |\nabla \Delta u|^2 \label{intermediate_L4_estimate}.
\end{align}
Therefore, for any $\tau \in [0,T)$
\begin{align*}
    \int_{0}^\tau \int_X (T-t)^4(|\nabla \nabla u|^2 &+ |\nabla \oo{\nabla} u|^2)|\nabla \oo{\nabla} u|^2 \omega^n dt\\
    &\leq \int_0^\tau \left(C + C\int_X (T-t)^4 |\nabla \Delta u|^2 \omega^n \right) dt \leq C
\end{align*}
and the result follows.

\end{proof}

\noindent Alternatively, using (\ref{alternate_bound}) one finds there exists a $C>0$ such that for any $\tau \in [0,T)$ we have
\begin{equation}\label{alternate_hessian_bound}
    \int_0^{\tau} \int_X (|\nabla \nabla u|^2 + |\nabla \oo{\nabla} u|^2)|\nabla \oo{\nabla} u|^2 \omega^n dt \leq \frac{C}{(T-\tau)^3},
\end{equation}
\begin{equation*}
    \int_0^{\tau} \int_X |\nabla \nabla \oo{\nabla} u|^2 \omega^n dt \leq \frac{C}{(T-\tau)^3}.
\end{equation*}

\noindent We now prove the main result of Theorem \ref{main_theorem}. 

\begin{corollary}
There exists a constant $C > 0$ such that
\begin{equation}\label{Ricci_bound}
    \lim_{\tau \to T} \int_0^\tau \int_X (T-t)^8|\Ric(\omega(t))|^4 \omega(t)^n dt \leq C.
\end{equation}
\end{corollary}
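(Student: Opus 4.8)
The plan is to read off the pointwise size of $|\Ric|$ directly from the decomposition (\ref{Ricci_formula}) and then feed the result into the space–time estimate (\ref{mixed_hessian_bound}) of the previous Proposition; essentially all of the analytic work has already been done, so this step is pure bookkeeping of powers of $(T-t)$.

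First I would take the $\omega(t)$-norm of both sides of (\ref{Ricci_formula}). Here I use three observations: $|\omega(t)|_{\omega(t)} = \sqrt{n}$; since $f^*\omega_Y$ is a nonnegative $(1,1)$-form one has $|f^*\omega_Y|_{\omega(t)} \leq \tr_{\omega(t)}(f^*\omega_Y) \leq C$ by Theorem \ref{bounds}; and $|i\d\dbar u|_{\omega(t)} = |\nabla\oo{\nabla} u|_{\omega(t)}$ because $u$ is a real function. Combining these with $e^{t-T}\leq 1$ and the elementary fact that $\frac{T-t}{1-e^{t-T}}$ is bounded on $[0,T)$ (by a constant depending only on $T$) yields the pointwise bound
\[
    |\Ric(\omega(t))|_{\omega(t)} \leq \frac{C}{T-t}\bigl(1 + |\nabla\oo{\nabla} u(t)|_{\omega(t)}\bigr)
\]
on $X\times[0,T)$. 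Raising this to the fourth power via $(a+b)^4 \leq 8(a^4+b^4)$ and multiplying by $(T-t)^8$ gives
\[
    (T-t)^8 |\Ric(\omega(t))|^4_{\omega(t)} \leq C(T-t)^4\bigl(1 + |\nabla\oo{\nabla} u(t)|^4_{\omega(t)}\bigr).
\]

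Then I would estimate the two terms on the right separately over $X\times[0,\tau]$ for $\tau\in[0,T)$. The constant term contributes at most $C\int_0^\tau (T-t)^4 \Vol_{\omega(t)}(X)\,dt \leq CT^5 \leq C$ using the uniform volume bound. For the Hessian term I would use $|\nabla\oo{\nabla} u|^4 \leq (|\nabla\nabla u|^2 + |\nabla\oo{\nabla} u|^2)|\nabla\oo{\nabla} u|^2$, which reduces it to precisely the quantity controlled by (\ref{mixed_hessian_bound}):
\[
    \int_0^\tau\!\!\int_X (T-t)^4|\nabla\oo{\nabla} u|^4\,\omega^n\,dt \leq \int_0^\tau\!\!\int_X (T-t)^4(|\nabla\nabla u|^2 + |\nabla\oo{\nabla} u|^2)|\nabla\oo{\nabla} u|^2\,\omega^n\,dt \leq C.
\]
Adding the two contributions and noting that the bound $C$ is independent of $\tau$, while the integrand is nonnegative so the family of integrals is nondecreasing in $\tau$, I may pass to the limit $\tau\to T$ to obtain (\ref{Ricci_bound}).

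I do not anticipate a genuine obstacle: the only points requiring care are the power count — one factor $(T-t)^4$ is spent cancelling the $(T-t)^{-4}$ singularity of $|\Ric|^4$, and the surviving factor $(T-t)^4$ is exactly what renders the fourth-order Hessian term integrable in time via (\ref{mixed_hessian_bound}) — and the remark that $|f^*\omega_Y|_{\omega(t)}$, though not obviously bounded a priori, is dominated by its trace because $f^*\omega_Y\geq 0$. (If one instead leaves the factor $(T-t)^4$ outside the time integral and invokes (\ref{alternate_hessian_bound}), the same computation gives the alternate estimate $\int_0^\tau\int_X (T-t)^4|\Ric|^4\,\omega^n\,dt \leq C(T-\tau)^{-3}$.)
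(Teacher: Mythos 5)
Your proposal is correct and follows essentially the same route as the paper: both read the pointwise bound $|\Ric| \leq \frac{C}{T-t}(1+|\nabla\oo{\nabla}u|)$ off the decomposition (\ref{Ricci_formula}) using Theorem \ref{bounds}, then raise to the fourth power and invoke the space--time estimate (\ref{mixed_hessian_bound}). Your version merely spells out a few details the paper leaves implicit (the domination $|f^*\omega_Y|_{\omega(t)} \leq \tr_{\omega(t)}(f^*\omega_Y)$ and the monotone passage to the limit $\tau \to T$), which is fine.
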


\begin{proof}
Recall (\ref{Ricci_formula}). From this one can see
\begin{align*}
    (1-e^{t-T})^2|\Ric|^2 &= |e^{t-T}\omega-f^*\omega_Y -i\d\dbar u|^2\\
    &\leq (|e^{t-T} \omega| + |f^*\omega_Y| + |i\d\dbar u|)^2\\
    &\leq C(1 + |\nabla \oo{\nabla} u|)^2\\
    &\leq C (1 + |\nabla \oo{\nabla} u|^2)
\end{align*}
and so
\begin{equation*}
    |\Ric|^2 \leq \frac{C}{(T-t)^2} (1+|\nabla \oo{\nabla} u|^2).
\end{equation*}
Thus for any $\tau \in [0,T)$ we have
\begin{align*}
    \int_0^\tau \int_X (T-t)^8 |\Ric|^4 \leq \int_0^\tau \int_X C(T-t)^4(1+|\nabla\oo{\nabla} u|^4) \leq C.
\end{align*}
The result follows.
\end{proof}

\noindent Alternatively, if one uses (\ref{alternate_hessian_bound}) we find there exists a $C >0$ such that for any $\tau \in [0,T)$
\begin{equation}\label{alternate_Ricci_bound}
    \int_0^\tau \int_X (T-t)^4 |\Ric|^4\omega^n dt \leq \frac{C}{(T-\tau)^3}.
\end{equation}

\begin{remark}
If we assume $n=2$, that is, $X$ has real dimension $4$, we can apply Simon's result, \cite[Thm 1.1]{S15}, in combination with the uniform scalar curvature estimate to achieve (\ref{alternate_Ricci_bound}).
\end{remark}

\subsection{$L^4$-Behaviour of the $(2,0)$-Hessian of the Ricci Potential}

\noindent Let us also derive an $L^4$-type estimate similar to (\ref{mixed_hessian_bound}) for the remaining complex Hessian $\nabla \nabla u$. This requires the $L^2$-behaviour of $\nabla \Ric$ in Proposition \ref{triple_nabla_bounds}.

\begin{proposition}
There exists a constant $C > 0$ such that 
\begin{equation*}
    \lim_{\tau \to T}\int_0^\tau \int_X (T-t)^8|\nabla \Ric|^2 \omega(t)^n dt \leq C.
\end{equation*}
\end{proposition}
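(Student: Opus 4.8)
The plan is to reduce the bound on $\nabla\Ric$ to quantities already controlled above by covariantly differentiating the Ricci decomposition (\ref{Ricci_formula}). Since the Chern connection of the K\"ahler metric $\omega(t)$ satisfies $\nabla\omega(t)=0$, and the complex Hessian of $u$ is $(i\d\dbar u)_{i\oo{j}}=\nabla_i\nabla_{\oo{j}}u$, applying $\nabla_k$ to (\ref{Ricci_formula}) gives
\begin{equation*}
(1-e^{t-T})\,\nabla_k R_{i\oo{j}}=-\nabla_k (f^*\omega_Y)_{i\oo{j}}-\nabla_k\nabla_i\nabla_{\oo{j}}u,
\end{equation*}
while the components $\nabla_{\oo{k}}R_{i\oo{j}}$, being conjugates of $\nabla_k R_{j\oo{i}}$ since $\Ric(\omega(t))$ is Hermitian, satisfy the conjugate identity. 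Hence
\begin{equation*}
|\nabla\Ric|^2\le \frac{C}{(T-t)^2}\big(|\nabla (f^*\omega_Y)|^2+|\nabla\nabla\oo{\nabla} u|^2\big),
\end{equation*}
so that $(T-t)^8|\nabla\Ric|^2\le C(T-t)^6\big(|\nabla (f^*\omega_Y)|^2+|\nabla\nabla\oo{\nabla} u|^2\big)$, and it is enough to integrate each of the two terms on the right.

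For the Hessian term I would invoke the estimate $\int_0^\tau\int_X (T-t)^4|\nabla\nabla\oo{\nabla} u|^2\,\omega^n\,dt\le C$ established above, together with $(T-t)^6\le T^2(T-t)^4$ on $[0,T)$. For the $f^*\omega_Y$ term the idea is to mimic the proof of Proposition \ref{gradient_trace_bound}: I would first establish the pointwise inequality
\begin{equation*}
|\nabla (f^*\omega_Y)|^2\le C\big(\Delta\tr_\omega(f^*\omega_Y)+|\Ric|+1\big),
\end{equation*}
the full-tensor analogue of (\ref{lower_trace_bound}), and then integrate over $X$. Since $\int_X\Delta\tr_\omega(f^*\omega_Y)\,\omega^n=0$ by Stokes' theorem, and $\norm{\Ric}_{L^1}\le C\norm{\Ric}_{L^2}\le C'\norm{\Rm}_{L^2}$, the bound (\ref{Rm_bound}) gives $\int_X|\nabla (f^*\omega_Y)|^2\,\omega^n\le C/(T-t)$, so that $\int_0^\tau\int_X (T-t)^6|\nabla (f^*\omega_Y)|^2\,\omega^n\,dt\le C\int_0^\tau (T-t)^5\,dt\le C$. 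Adding the two contributions gives a bound independent of $\tau$, and letting $\tau\to T$ completes the argument.

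The step I expect to be the main obstacle is the pointwise estimate on the full covariant derivative $\nabla (f^*\omega_Y)$, as opposed to $\nabla\tr_\omega(f^*\omega_Y)$ only: a symmetric tensor is not determined by its trace, so the refined Schwarz lemma (\ref{lower_trace_bound}) used for Proposition \ref{gradient_trace_bound} does not suffice. The cleanest route I see is the Eells--Sampson/Bochner formula for the energy density $e(f)=\tr_\omega(f^*\omega_Y)$ of the holomorphic map $f$ --- equivalently, Royden's refinement of Yau's Schwarz lemma --- which controls $|\nabla df|^2$, and hence $|\nabla (f^*\omega_Y)|^2$ via the $C^0$-bound $\tr_\omega(f^*\omega_Y)\le C$ from Theorem \ref{bounds}, in terms of $\Delta\tr_\omega(f^*\omega_Y)$, $|\Ric|$, and the holomorphic bisectional curvature of $\omega_Y$, the latter being bounded on the relatively compact neighbourhood of $f(X)$ on which $\omega_Y$ is defined. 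I note that this argument in fact yields the stronger space-time weight $(T-t)^6$; the statement is recorded with $(T-t)^8$ only for uniformity with the remaining estimates of Theorem \ref{main_theorem}.
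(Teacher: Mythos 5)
Your argument is correct, but it is genuinely different from the paper's. The paper's proof is parabolic: it integrates the evolution equation (\ref{Ricci_norm_evolution}) for $|\Ric|^2$ in time, estimates the reaction term by $|\Rm||\Ric|^2\le C|\Rm|^2+|\Ric|^4$, and then closes using the $L^2$ bound (\ref{Rm_bound}) on $\Rm$, the $L^4$-like bound (\ref{Ricci_bound}) on $\Ric$, and the favourable sign of the boundary term $-\tfrac12\tfrac{d}{dt}\int_X(T-t)^8|\Ric|^2$. Your proof is elliptic: differentiating (\ref{Ricci_formula}) reduces everything to the already-established bound $\int_0^\tau\int_X(T-t)^4|\nabla\nabla\oo{\nabla}u|^2\,\omega^n\,dt\le C$ plus control of $\nabla(f^*\omega_Y)$, and you correctly isolate the latter as the only new ingredient. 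Your resolution of it is sound: since $f$ is holomorphic and $\omega_Y$ is parallel for the pulled-back Chern connection, $\nabla_k(f^*\omega_Y)_{i\oo{j}}=(\omega_Y)_{\alpha\oo{\beta}}(\nabla_k f^\alpha_i)\,\oo{f^\beta_j}$, so $|\nabla(f^*\omega_Y)|^2\le \tr_\omega(f^*\omega_Y)\,|\nabla\d f|^2\le C|\nabla\d f|^2$, and the untraced Chern--Lu/Bochner identity (the one from which (\ref{lower_trace_bound}) follows by Cauchy--Schwarz) gives $|\nabla\d f|^2\le \Delta\tr_\omega(f^*\omega_Y)+C|\Ric|+C$; integrating, the Laplacian term vanishes and $\norm{\Ric}_{L^1}\le C/(T-t)$, so your weight bookkeeping closes with room to spare. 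Each route buys something: the paper's argument uses nothing about $f^*\omega_Y$ beyond Theorem \ref{bounds} and is the template reused for the $(2,0)$-Hessian estimates, while yours avoids the evolution equation entirely, introduces no circularity (it only uses the first estimate of Proposition \ref{triple_nabla_bounds}), and, as you observe, actually yields the sharper weight $(T-t)^6$ in place of $(T-t)^8$. The only point to make explicit is that $|\nabla\Ric|$ denotes the $(1,0)$-part, with the $(0,1)$-part having equal norm by Hermitian symmetry, which you do address.
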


\begin{proof}
From the evolution of the norm of the Ricci curvature, (\ref{Ricci_norm_evolution}), we have
\begin{align*}
    \int_X |\nabla \Ric|^2 &= \int_X |\Ric|^2 + \Real\ip{\Rm, \oo{\Ric \otimes \Ric}} - \frac{1}{2}  \d_t |\Ric|^2\\
    &\leq \int_X \left(|\Ric|^2 + |\Rm||\Ric|^2 +\frac{1}{2}(-n-R)|\Ric|^2\right) -\frac{1}{2}\frac{d}{dt}\int_X |\Ric|^2
\end{align*}
and therefore
\begin{align*}
    \int_X |\nabla \Ric|^2 \leq \int_X \left(C|\Rm|^2 + |\Ric|^4\right) -\frac{1}{2} \frac{d}{dt}\int_X |\Ric|^2.
\end{align*}
It follows that
\begin{align*}
    \int_0^\tau\int_X (T-t)^8 |\nabla \Ric|^2 &\leq \int_0^\tau\int_X  C(T-t)^8|\Rm|^2 + (T-t)^8 |\Ric|^4\\
    &+C- \frac{1}{2}\int_0^\tau \frac{d}{dt} \int_X (T-t)^8 |\Ric|^2\\
    &\leq C
\end{align*}
where $C$ is independent of $\tau$.
\end{proof}

\noindent This completes the proof of Theorem \ref{main_theorem}.
Alternatively, using (\ref{alternate_Ricci_bound}) we may find a constant $C > 0$ such that for all $\tau \in [0,T)$
\begin{equation}\label{alternate_gradient_Ricci_bound}
    \int_0^\tau \int_X (T-t)^4|\nabla \Ric|^2 \omega^n dt \leq \frac{C}{(T-\tau)^3}.
\end{equation}

\begin{proposition}
There exists a constant $C > 0$ such that
\begin{equation*}
    \lim_{\tau \to T} \int_0^\tau \int_X (T-t)^8|\nabla \nabla u(t)|^4 \omega(t)^n dt \leq C
\end{equation*}
and additionally,
\begin{equation*}
    \lim_{\tau \to T} \int_0^\tau \int_X(T-t)^8 (|\nabla \nabla \nabla u(t)|^2 + |\oo{\nabla} \nabla \nabla u(t)|^2) \omega(t)^n dt \leq C.
\end{equation*}
\end{proposition}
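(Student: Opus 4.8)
The plan is to apply the two general estimates of Proposition~\ref{triple_nabla_bounds} and the second inequality of the preceding Proposition to the Ricci potential $v = u$, using that $|\nabla u| \le C$ by Theorem~\ref{bounds}. First I would write down, directly from the second general Hessian inequality,
\begin{equation*}
    \int_X (|\nabla \nabla u|^2 + |\nabla \oo{\nabla} u|^2)|\nabla \nabla u|^2 \le 16 \int_X \left( |\nabla u|^2 (|\nabla \nabla \nabla u|^2 + |\oo{\nabla} \nabla \nabla u|^2) + (\Delta u)^2 |\nabla \nabla u|^2 + |\nabla u|^4 |\Ric|^2 \right).
\end{equation*}
Since $|\nabla u| \le C$ this gives, after absorbing constants,
\begin{equation*}
    \int_X |\nabla \nabla u|^4 \le C \int_X \left( |\nabla \nabla \nabla u|^2 + |\oo{\nabla} \nabla \nabla u|^2 \right) + C \int_X (\Delta u)^2 |\nabla \nabla u|^2 + C \int_X |\Ric|^2.
\end{equation*}
Then I would feed in the second inequality of Proposition~\ref{triple_nabla_bounds} to bound $\int_X (|\nabla \nabla \nabla u|^2 + |\oo{\nabla} \nabla \nabla u|^2)$ by $C \int_X (|\nabla \Delta u|^2 + |\Rm|^2 + (1 + (\Delta u)^2)|\nabla \nabla u|^2 + |\nabla \Ric|^2)$.

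Next I would control each resulting term after multiplying by the appropriate power of $(T-t)$ and integrating in time. The terms $\int_X |\nabla \Delta u|^2$, $\int_X |\Rm|^2$, $\int_X |\Ric|^2$ and $\int_X |\nabla \Ric|^2$ are handled, respectively, by the Proposition giving $\int_0^\tau \int_X (T-t)^4 |\nabla \Delta u|^2 \le C$, by (\ref{Rm_bound}), by (\ref{Ricci_bound}) together with the $L^1 \le L^2$ interpolation (or directly the crude bound $|\Ric|^2 \le C(T-t)^{-2}(1 + |\nabla \oo{\nabla} u|^2)$ from the proof of (\ref{Ricci_bound})), and by the immediately preceding Proposition giving $\int_0^\tau \int_X (T-t)^8 |\nabla \Ric|^2 \le C$. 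The only genuinely new term is $\int_X (\Delta u)^2 |\nabla \nabla u|^2$; since $\Delta u \le C$ by Theorem~\ref{bounds} but $\Delta u$ is only bounded below by $-C/(T-t)$, I would estimate $(\Delta u)^2 \le C/(T-t)^2$ and then use Proposition~\ref{L2_unmixed_hessian_bound} to get $\int_X (\Delta u)^2 |\nabla \nabla u|^2 \le C(T-t)^{-2} \int_X |\nabla \nabla u|^2 \le C(T-t)^{-4}$. Collecting powers: each of $\int_X |\nabla \nabla \nabla u|^2$, $\int_X |\Rm|^2$, $\int_X (\Delta u)^2 |\nabla \nabla u|^2$ is $\le C(T-t)^{-4}$ pointwise in $t$ (after using the time-integrated bound for $|\nabla \Delta u|^2$ one keeps it under the time integral with weight $(T-t)^4$), so multiplying by $(T-t)^8$ and integrating in $t$ over $[0,\tau]$ yields a bound uniform in $\tau$; the $|\nabla \Ric|^2$ contribution is absorbed using the $(T-t)^8$-weighted bound just established.

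For the second assertion, $\int_0^\tau \int_X (T-t)^8 (|\nabla \nabla \nabla u|^2 + |\oo{\nabla}\nabla\nabla u|^2) \le C$, I would simply integrate the second inequality of Proposition~\ref{triple_nabla_bounds} against $(T-t)^8\,dt$: the right-hand side is $C\int_X(|\nabla \Delta u|^2 + |\Rm|^2 + (1+(\Delta u)^2)|\nabla \nabla u|^2 + |\nabla \Ric|^2)$, and now the term $(1 + (\Delta u)^2)|\nabla \nabla u|^2 \le C(T-t)^{-2} |\nabla \nabla u|^2$ has $\int_X$ bounded by $C(T-t)^{-4}$ via Proposition~\ref{L2_unmixed_hessian_bound}, $\int_X |\nabla \Delta u|^2$ is time-integrable against $(T-t)^4$, $\int_X|\Rm|^2 \le C(T-t)^{-2}$, and $\int_0^\tau\int_X(T-t)^8|\nabla\Ric|^2 \le C$. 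So after the time integration every term is controlled uniformly in $\tau$, giving the claim.

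The main obstacle I anticipate is purely bookkeeping of the $(T-t)$ powers so that the absorption of $\int_X(\Delta u)^2|\nabla\nabla u|^2$ and of the third-order terms is consistent — one must verify that the weight $(T-t)^4$ (not more) suffices to make $\int_0^\tau\int_X(T-t)^4|\nabla\Delta u|^2\,dt$ finite while simultaneously the weight $(T-t)^8$ is exactly what is needed to kill the $(T-t)^{-4}$ singularities coming from $(\Delta u)^2|\nabla\nabla u|^2$ and $|\nabla\nabla\nabla u|^2$, and $(T-t)^8$ is admissible for the $|\nabla\Ric|^2$ term. No new analytic input is required beyond the Propositions already proved; the argument is a weighted combination of them exactly as in the proof of (\ref{mixed_hessian_bound}) and (\ref{Ricci_bound}), with the extra ingredient being the previously-established $L^2$-bound on $\nabla\Ric$.
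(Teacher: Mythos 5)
Your proposal is correct and follows essentially the same route as the paper: apply the second general Hessian estimate and the second inequality of Proposition~\ref{triple_nabla_bounds} to $v=u$, absorb the $(\Delta u)^2|\nabla\nabla u|^2$, $|\Rm|^2$ and $|\Ric|^2$ terms into $C(T-t)^{-4}$ via Theorem~\ref{bounds}, Proposition~\ref{L2_unmixed_hessian_bound} and (\ref{Rm_bound}), and then weight by $(T-t)^8$ and integrate in time using the established bounds on $(T-t)^4|\nabla\Delta u|^2$ and $(T-t)^8|\nabla\Ric|^2$. The power bookkeeping you describe matches the paper's computation exactly.
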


\begin{proof}
We have
\begin{align*}
    \int_X (|\nabla \nabla u|^2 + |\nabla \oo{\nabla} u|^2)|\nabla \nabla u|^2 &\leq \frac{C}{(T-t)^4} + C \int_X |\nabla \nabla \nabla u|^2 + |\oo{\nabla} \nabla \nabla u|^2\\
    &\leq \frac{C}{(T-t)^4} + C\int_X |\nabla \Delta u|^2 + |\nabla \Ric|^2.
\end{align*}
Therefore, 
\begin{align*}
    \int_0^\tau \int_X (T-t)^8 |\nabla \nabla u|^4 \omega^n dt &\leq C+ C \int_0^\tau \int_X (T^4(T-t)^4|\nabla \Delta u|^2 + (T-t)^8|\nabla \Ric|^2)\omega^n dt\\
    &\leq C
\end{align*}
and the result follows.
\end{proof}

\noindent If instead (\ref{alternate_gradient_Ricci_bound}) is used, then one finds there exists a $C>0$ such that for any $\tau \in [0,T)$ we have
\begin{equation*}
    \int_0^\tau \int_X (T-t)^4|\nabla \nabla u|^4 \omega^ndt \leq \frac{C}{(T-\tau)^3},
\end{equation*}
\begin{equation*}
    \int_0^\tau \int_X (T-t)^4(|\nabla \nabla \nabla u|^2 + |\oo{\nabla} \nabla \nabla u|^2) \omega^n dt \leq \frac{C}{(T-\tau)^3}.
\end{equation*}

\subsection{Interpolation Estimates}

\noindent Recall equation (\ref{Rm_bound}) from which we have
\begin{equation}\label{Good_L2_Ricci_bound}
    \int_X |\Ric|^2 \omega^n \leq \frac{C}{(T-t)^2}.
\end{equation}
We combine this with the $L^4$-like estimate on the Ricci curvature, (\ref{Ricci_bound}), to deduce the following interpolating estimate.

\begin{corollary}
For any $2 \leq p \leq 4$ there exists a constant $C=C(p,\omega_0, f^*\omega_Y, T) > 0$ such that
\begin{equation*}
    \lim_{\tau \to T} \int_0^\tau \int_X (T-t)^{3p-4}|\Ric(\omega(t))|^p \omega(t)^n dt \leq C.
\end{equation*}
\end{corollary}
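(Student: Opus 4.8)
The plan is to interpolate between the two endpoint estimates already in hand: the $L^2$ bound (\ref{Good_L2_Ricci_bound}), which gives $\int_X (T-t)^2 |\Ric|^2 \omega^n \leq C$, and the $L^4$-type bound (\ref{Ricci_bound}), $\int_0^\tau \int_X (T-t)^8 |\Ric|^4 \omega^n dt \leq C$ with $C$ independent of $\tau$. The key observation is that the weight exponents $2$ and $8$ paired with the curvature powers $2$ and $4$ are exactly the endpoints $p=2$ and $p=4$ of the claimed one-parameter family $(T-t)^{3p-4}|\Ric|^p$, so the whole range $2 \le p \le 4$ should follow from a single weighted H\"older inequality.

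Concretely, I would set $a = \tfrac{4-p}{2}$ and $b = \tfrac{p-2}{2}$, so that $a, b \geq 0$, $a + b = 1$, and the exponent bookkeeping $2a + 4b = p$, $2a + 8b = 3p-4$ holds; this is precisely what pins down the weight $(T-t)^{3p-4}$. Then one has the pointwise identity
\begin{equation*}
    (T-t)^{3p-4} |\Ric|^p = \big[(T-t)^2 |\Ric|^2\big]^{a} \cdot \big[(T-t)^8 |\Ric|^4\big]^{b},
\end{equation*}
and applying H\"older's inequality on $X \times [0,\tau]$ with respect to the measure $\omega(t)^n dt$, with conjugate exponents $1/a$ and $1/b$, yields
\begin{equation*}
    \int_0^\tau \int_X (T-t)^{3p-4}|\Ric|^p \omega^n dt \leq \Big(\int_0^\tau \int_X (T-t)^2 |\Ric|^2 \omega^n dt\Big)^{a} \Big(\int_0^\tau \int_X (T-t)^8 |\Ric|^4 \omega^n dt\Big)^{b}.
\end{equation*}

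To finish, I would bound the first factor by $(C\tau)^a \leq (CT)^a$ using (\ref{Good_L2_Ricci_bound}) and the second by $C^b$ using (\ref{Ricci_bound}), both bounds being independent of $\tau$, so the product is $\leq C(p,\omega_0,f^*\omega_Y,T)$. Since the integrand is nonnegative the left-hand side is nondecreasing in $\tau$, hence the limit as $\tau \to T$ exists and obeys the same bound. The degenerate endpoints $p=2$ (with $a=1$, $b=0$) and $p=4$ (with $a=0$, $b=1$) are just the input estimates, so they need no separate treatment. There is no genuine obstacle here — this is a routine interpolation — and the only step requiring (minor) care is the exponent arithmetic confirming that $a+b=1$ forces exactly the stated weight.
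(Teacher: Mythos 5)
Your proposal is correct and is essentially identical to the paper's proof: the same weighted H\"older interpolation between the $L^2$ bound (\ref{Good_L2_Ricci_bound}) and the $L^4$-type bound (\ref{Ricci_bound}), with the same exponents ($a=\tfrac{4-p}{2}$, $b=\tfrac{p-2}{2}$ matching the paper's $r=\tfrac{2}{4-p}$, $s=\tfrac{2}{p-2}$) and the same bound $C\tau\leq CT$ on the first factor. Your explicit handling of the degenerate endpoints $p=2,4$ is a minor tidiness the paper leaves implicit.
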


\begin{proof}
This follows by using (\ref{Ricci_bound}) and (\ref{Good_L2_Ricci_bound}) in combination with the H\"older inequality $\norm{fg}_{1} \leq \norm{f}_{r} \norm{g}_{s}$ where
\begin{equation*}
    f = (T-t)^{4-p}|\Ric|^{4-p}, \quad g= (T-t)^{4p-8} |\Ric|^{2p-4}, \quad r = \frac{2}{4-p}, \quad s = \frac{2}{p-2}.
\end{equation*}
That is, for all $\tau \in [0,T)$ we have
\begin{align*}
    \int_0^\tau \int_X (T-t)^{3p-4}|\Ric|^p &\leq \left(\int_0^\tau \int_X (T-t)^2|\Ric|^2\right)^{\frac{4-p}{2}} \left(\int_0^\tau \int_X (T-t)^8 |\Ric|^4\right)^{\frac{p-2}{2}}\\
    &\leq C.
\end{align*}
\end{proof}

\noindent We also have the following interpolating estimate on the behaviour of the scalar curvature.

\begin{corollary}
For any $1 \leq p$ there exists a $C=C(p, \omega_0, f^*\omega_Y, T) > 0$ such that for all $t \in [0, T)$ we have
\begin{equation*}
    \int_X |R|^p \omega^n \leq \frac{C}{(T-t)^{2p-2}}.
\end{equation*}
\end{corollary}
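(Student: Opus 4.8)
The plan is to prove the interpolating estimate for the scalar curvature by combining the uniform lower bound on $R$ from Theorem \ref{bounds}, the $L^1$-bound $\int_X |R| \omega^n \leq C$ from equation (\ref{L1_scalar_bound}), and the pointwise upper bound $R \leq C/(T-t)^2$ again from Theorem \ref{bounds}. The key observation is that $|R|^p = |R|^{p-1}\cdot |R|$, and one can absorb the $(p-1)$ copies of $|R|$ using the $L^\infty$-type bound while keeping a single factor of $|R|$ to integrate against the finite $L^1$-norm.

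First I would note that since $R \geq -C$ uniformly and $R \leq C/(T-t)^2$, we have the pointwise bound $|R| \leq C + C/(T-t)^2 \leq C/(T-t)^2$ on $[0,T)$ (adjusting the constant, since $T-t$ is bounded). Hence $|R|^{p-1} \leq C^{p-1}/(T-t)^{2(p-1)} = C/(T-t)^{2p-2}$ where $C$ now depends on $p$ as well. Then
\begin{equation*}
    \int_X |R|^p \omega^n = \int_X |R|^{p-1} |R| \omega^n \leq \frac{C}{(T-t)^{2p-2}} \int_X |R| \omega^n \leq \frac{C}{(T-t)^{2p-2}},
\end{equation*}
where the last inequality uses (\ref{L1_scalar_bound}). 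This handles all $p \geq 1$ at once; for $p=1$ the statement reduces to (\ref{L1_scalar_bound}) itself, so the estimate is consistent.

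There is essentially no obstacle here: the argument is a one-line Hölder/interpolation between $L^1$ and $L^\infty$, and all three ingredients are already established earlier in the excerpt. The only point requiring minor care is that the constant $C$ must be allowed to depend on $p$ (through the factor $C^{p-1}$) and on $\omega_0, f^*\omega_Y, T$ (through Theorem \ref{bounds} and (\ref{L1_scalar_bound})), which is exactly what the statement permits. One could alternatively phrase it via the Hölder inequality $\|R\|_p^p \leq \|R\|_\infty^{p-1}\|R\|_1$, but the direct pointwise factorization is cleaner and avoids introducing conjugate exponents.
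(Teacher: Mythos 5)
Your proof is correct and is essentially the paper's argument: both interpolate between the $L^1$ bound (\ref{L1_scalar_bound}) and the $C^0$ bound $(T-t)^2|R|\leq C$ from Theorem \ref{bounds} (together with the uniform lower bound on $R$). The only difference is cosmetic — the paper reaches the $L^\infty$ endpoint by applying H\"older with a finite exponent $P$ and then letting $P\to\infty$, whereas you use the pointwise factorization $|R|^p=|R|^{p-1}|R|$ directly, which is the cleaner route to the same estimate.
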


\begin{proof}
Let $P > p$. We use equation (\ref{L1_scalar_bound}) in combination with the H\"older inequality $\norm{fg}_{L^1} \leq \norm{f}_{L^r} \norm{g}_{L^s}$ where
\begin{equation*}
    f = |R|^{\frac{P-p}{P-1}}, \quad g = (T-t)^{\frac{2P(p-1)}{P-1}} |R|^{\frac{P(p-1)}{P-1}}, \quad r = \frac{P-1}{P-p}, \quad s = \frac{P-1}{p-1}.
\end{equation*}
That is,
\begin{align*}
    \int_X (T-t)^{\frac{2P(p-1)}{P-1}} |R|^p &\leq \left(\int_X |R|\right)^{\frac{P-p}{P-1}} \left(\int_X (T-t)^{2P}|R|^{P}\right)^{\frac{p-1}{P-1}}\\
    &\leq C^{\frac{P-p}{P-1}} \left(\left(\int_X (T-t)^{2P}|R|^{P}\right)^{\frac{1}{P}}\right)^{\frac{P(p-1)}{P-1}}.
\end{align*}
Now taking the limit as $P \to \infty$ and applying Theorem \ref{bounds} gives us
\begin{equation*}
    \int_X (T-t)^{2(p-1)}|R|^p \leq C \left(\norm{(T-t)^2 R}_{C^0}\right)^{p-1} \leq C.
\end{equation*}
\end{proof}

\section{On Improving the Global $L^p$-Estimates}

In this section we provide further estimates which are dependent on an $L^2$-like norm of $\Delta \tr_\omega (f^*\omega_Y)$. This is motivated by the following result which would enable us to upgrade the $L^4$-like Ricci curvature estimate to a genuine $L^4$-estimate provided that the $L^2$-behaviour of $\Delta \tr_\omega(f^*\omega_Y)$ is known. In fact, this would enable most estimates in the previous section with an additional time integral to be improved.

\begin{proposition}
There exists a $C > 0$ such that for all $\tau \in [0,T)$ we have
\begin{align*}
    \int_X |\nabla \Delta u (\tau)|^2 \omega(\tau)^n \leq \frac{C}{(T-\tau)^3} + C \int_0^\tau \int_X (\Delta \tr_{\omega(t)} (f^*\omega_Y))^2 \omega(t)^n dt,
\end{align*}
and additionally
\begin{align*}
    \int_0^\tau \int_X (|\nabla \nabla \Delta u(t)|^2 &+ |\nabla \oo{\nabla} \Delta u(t)|^2) \omega(t)^n dt\\
    &\leq \frac{C}{(T-\tau)^3} + C \int_0^\tau \int_X (\Delta \tr_{\omega(t)} (f^*\omega_Y))^2 \omega(t)^n dt.
\end{align*}
\end{proposition}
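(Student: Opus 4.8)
The plan is to run an energy argument: derive a differential inequality for $\int_X|\nabla\Delta u|^2\,\omega^n$ from the evolution of $|\nabla\Delta u|^2$ under (\ref{KRF}), integrate it from $0$ to $\tau$, and carry $\int_X(\Delta\tr_\omega(f^*\omega_Y))^2$ through untouched. Writing $w=\Delta u$, the evolution $\parabolic{}w = w + \ip{\Ric, i\d\dbar u} + \Delta\tr_\omega(f^*\omega_Y)$ is recorded in \S2. Using $\d_t g^{i\oo j}=R^{i\oo j}+g^{i\oo j}$ for the normalized flow together with the Bochner identity (\ref{Bochner_formula}) for the function $w$, the curvature contraction and the $\Real\ip{\nabla\Delta w,\oo\nabla w}$ terms cancel, yielding $\parabolic{}|\nabla w|^2 = |\nabla w|^2 + 2\Real\ip{\nabla(\parabolic{}w),\oo\nabla w} - |\nabla\nabla w|^2 - |\nabla\oo\nabla w|^2$; substituting the evolution of $w$ gives
\begin{align*}
\parabolic{}|\nabla\Delta u|^2 &= 3|\nabla\Delta u|^2 - |\nabla\nabla\Delta u|^2 - |\nabla\oo\nabla\Delta u|^2\\
&\quad + 2\Real\ip{\nabla\ip{\Ric, i\d\dbar u},\,\oo\nabla\Delta u} + 2\Real\ip{\nabla\Delta\tr_\omega(f^*\omega_Y),\,\oo\nabla\Delta u}.
\end{align*}

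Integrating over $X$ and using $\int_X\Delta v\,\omega^n=0$ and $\d_t\omega^n=(-R-n)\omega^n$ produces an ODE for $\int_X|\nabla\Delta u|^2\,\omega^n$ whose right-hand side I would handle term by term. The term $\int_X(3-R-n)|\nabla\Delta u|^2$ is $\le C\int_X|\nabla\Delta u|^2$ since $R\ge -C$ by Theorem \ref{bounds}, and its time-integral over $[0,\tau]$ is $\le C/(T-\tau)^3$ by (\ref{alternate_bound}); this is where the $\frac{C}{(T-\tau)^3}$ comes from. The terms $-\int_X(|\nabla\nabla\Delta u|^2+|\nabla\oo\nabla\Delta u|^2)$ are kept. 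For the $\Delta\tr$ cross term I integrate by parts to $-2\int_X\Delta\tr_\omega(f^*\omega_Y)\,\Delta(\Delta u)$, bound $|\Delta(\Delta u)|\le\sqrt n\,|\nabla\oo\nabla\Delta u|$, and apply a Young inequality with small parameter, absorbing a fraction of $-\int_X|\nabla\oo\nabla\Delta u|^2$ and leaving exactly $+C\int_X(\Delta\tr_\omega(f^*\omega_Y))^2$ — the term appearing on the right of the statement. The $\ip{\Ric, i\d\dbar u}$ cross term is treated identically: integrating by parts, using $|\ip{\Ric, i\d\dbar u}|\le|\Ric|\,|\nabla\oo\nabla u|$, and a Young inequality absorb more of $-\int_X|\nabla\oo\nabla\Delta u|^2$ and leave $+C\int_X|\Ric|^2|\nabla\oo\nabla u|^2$.

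It then remains to bound $\int_0^\tau\int_X|\Ric|^2|\nabla\oo\nabla u|^2\,\omega^n\,dt$, for which I would use the pointwise consequences $|\Ric|^2\le\frac{C}{(T-t)^2}(1+|\nabla\oo\nabla u|^2)$ and $|\nabla\oo\nabla u|^2\le C(1+|\Ric|^2)$ of the Ricci decomposition (\ref{Ricci_formula}) and $0\le\tr_\omega(f^*\omega_Y)\le C$, together with Proposition \ref{L2_mixed_hessian_bound} and the $L^4$-type estimates of \S3 — (\ref{alternate_hessian_bound}), (\ref{alternate_Ricci_bound}) and their weighted forms (\ref{mixed_hessian_bound}), (\ref{Ricci_bound}) — reorganised, where needed, by an integration by parts in the time variable. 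Once every error term is bounded, I integrate the ODE from $0$ to $\tau$: dropping the surviving negative Hessian terms yields the first inequality for $\int_X|\nabla\Delta u(\tau)|^2$, while instead keeping them on the left and discarding the nonnegative boundary value $\int_X|\nabla\Delta u(\tau)|^2$ yields the second inequality for $\int_0^\tau\int_X(|\nabla\nabla\Delta u|^2+|\nabla\oo\nabla\Delta u|^2)\,dt$; the constant from $\int_X|\nabla\Delta u(0)|^2\,\omega_0^n$ is harmless.

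The main obstacle is the $\ip{\Ric, i\d\dbar u}$ cross term. In the derivation of (\ref{alternate_bound}) the analogous term is paired against the bounded function $\Delta u$, whereas here it is paired against $\nabla\Delta u$, so no cancellation is available and one is genuinely forced to control $\int_0^\tau\int_X|\Ric|^2|\nabla\oo\nabla u|^2$ with neither factor pointwise bounded; extracting the right power of $(T-\tau)$ requires invoking several of the \S3 estimates simultaneously and tracking carefully where the $(T-t)$-weights sit. The remaining work — verifying the cancellations in the evolution of $|\nabla\Delta u|^2$ and the various integrations by parts — is routine.
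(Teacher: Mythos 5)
Your skeleton --- the evolution identity for $|\nabla\Delta u|^2$, the treatment of the $\Delta\tr_\omega(f^*\omega_Y)$ cross term by integration by parts and Young's inequality, the use of $\d_t\omega^n=(-R-n)\omega^n$, and the final integration in time closed by (\ref{alternate_bound}) --- is exactly the paper's. The gap sits precisely where you flagged it: the term $\int_0^\tau\int_X|\Ric|^2|\nabla\oo{\nabla} u|^2\,\omega^n\,dt$ cannot be brought down to $C(T-\tau)^{-3}$ with the tools you cite. Pointwise, $|\Ric|^2\le C(T-t)^{-2}(1+|\nabla\oo{\nabla} u|^2)$, so after discarding an admissible piece the integrand is $(T-t)^{-2}|\nabla\oo{\nabla} u|^4$; the sharpest spatial bound available, (\ref{intermediate_L4_estimate}), gives $\int_X|\nabla\oo{\nabla} u|^4\le C(T-t)^{-4}+C\int_X|\nabla\Delta u|^2$, and after multiplying by $(T-t)^{-2}$ and integrating in time --- whether directly or by parts against $F(\tau)=\int_0^\tau\int_X|\nabla\Delta u|^2\le C(T-\tau)^{-3}$ --- one only obtains $C(T-\tau)^{-5}$. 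The other routes you mention land at the same power: Cauchy--Schwarz in space and time against (\ref{alternate_Ricci_bound}) and (\ref{alternate_hessian_bound}) gives $(T-\tau)^{-7/2}\cdot(T-\tau)^{-3/2}$, and the reverse substitution $|\nabla\oo{\nabla} u|^2\le C(1+(T-t)^2|\Ric|^2)$ reduces to $\int_0^\tau(T-t)^2\int_X|\Ric|^4$, again $C(T-\tau)^{-5}$ at best. So as written your argument proves the proposition with $(T-\tau)^{-5}$ in place of $(T-\tau)^{-3}$, which is strictly weaker than the statement.

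The idea you are missing is that the paper substitutes the decomposition (\ref{Ricci_formula}) into the evolution equation of $\Delta u$ \emph{before} running the energy estimate, replacing $\ip{\Ric,i\d\dbar u}$ by $e^{t-T}\Delta u-\ip{f^*\omega_Y,i\d\dbar u}-|\nabla\oo{\nabla} u|^2$ (up to the prefactor $(1-e^{t-T})^{-1}$). The dangerous cross term then pairs $\oo{\nabla}\Delta u$ against $\nabla|\nabla\oo{\nabla} u|^2$ rather than against $\nabla\ip{\Ric,i\d\dbar u}$, so after integration by parts and Young one is left with $\int_X|\nabla\oo{\nabla} u|^4$ carrying no extra $(T-t)^{-2}$ weight, which (\ref{intermediate_L4_estimate}) and (\ref{alternate_bound}) control at the claimed rate $(T-\tau)^{-3}$. (Be aware that the displayed substitution in the paper silently drops the prefactor $(1-e^{t-T})^{-1}$, which is unbounded as $t\to T$; keeping it reintroduces part of the loss you ran into, so the obstacle you identified is genuine and the bookkeeping there deserves care.)
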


\begin{proof} This proof is similar to \cite[Thm 4.5]{TZ16}. First we combine the evolution equation for $\Delta u$ with equation (\ref{Ricci_formula}),
\begin{equation*}
    \parabolic{}\Delta u = (1+e^{t-T}) \Delta u - \ip{f^*\omega_Y, i\d\dbar u} - |\nabla \oo{\nabla} u|^2 + \Delta \tr_\omega(f^*\omega_Y)
\end{equation*}
and calculate,
\begin{align*}
    \parabolic{} |\nabla \Delta u|^2 &= |\nabla \Delta u|^2 - |\nabla \nabla \Delta u|^2 - |\nabla \oo{\nabla} \Delta u|^2\\
    &+ 2 \Real \ip{\nabla \parabolic{} \Delta u, \oo{\nabla} \Delta u}.
\end{align*}
Therefore, we have
{\allowdisplaybreaks
\begin{align*}
    \frac{d}{dt} \int_X |\nabla \Delta u|^2 \omega^n &= \int_X \frac{\d}{\d t}\left( |\nabla \Delta u|^2\right) \omega^n + \int_X |\nabla \Delta u|^2 (-n-R) \omega^n\\
    &= \int_X |\nabla \Delta u|^2(1-n-R)  -|\nabla \nabla \Delta u|^2 - |\nabla \oo{\nabla} \Delta u|^2\\
    &+ \int_X 2 (1+e^{t-T})|\nabla \Delta u|^2-2\Real \ip{\nabla \ip{f^*\omega_Y, i\d\dbar u}, \oo{\nabla} \Delta u}\\
    &+ \int_X - 2 \Real \ip{\nabla |\nabla \oo{\nabla} u|^2, \oo{\nabla} \Delta u} + 2\Real \ip{\nabla \Delta \tr_\omega(f^*\omega_Y), \oo{\nabla} \Delta u}\\
    &\leq \int_X C|\nabla \Delta u|^2 - |\nabla \nabla \Delta u|^2 - |\nabla \oo{\nabla} \Delta u|^2\\
    &+\int_X -2\Real \ip{\nabla \ip{f^*\omega_Y, i\d\dbar u}, \oo{\nabla} \Delta u} - 2 \Real \ip{\nabla |\nabla \oo{\nabla} u|^2, \oo{\nabla} \Delta u}\\
    &+\int_X 2\Real \ip{\nabla \Delta \tr_\omega(f^*\omega_Y), \oo{\nabla} \Delta u}.
\end{align*}
}
Let us estimate the last three terms independently. Note that,
\begin{align*}
    \int_X -2 \Real \ip{\nabla \ip{f^*\omega_Y, i\d\dbar u}, \oo{\nabla} \Delta u}
    &= \int_X 2\ip{f^*\omega_Y, i\d\dbar u} \Delta \Delta u\\
    &\leq \int_X 2|f^*\omega_Y| |\nabla \oo{\nabla} u| |\Delta \Delta u|\\
    &\leq \int_X C |\nabla \oo{\nabla} u|^2 + \frac{1}{4} |\nabla \oo{\nabla} \Delta u|^2.
\end{align*}
Next, using integration by parts and applying (\ref{intermediate_L4_estimate}) yields
\begin{align*}
    \int_X -2 \Real \ip{\nabla |\nabla \oo{\nabla} u|^2, \oo{\nabla} \Delta u} &= \int_X 2 |\nabla \oo{\nabla} u|^2 \Delta \Delta u\\
    &\leq \int_X C|\nabla \oo{\nabla} u|^4 + \frac{1}{4} |\nabla \oo{\nabla} \Delta u|^2\\
    &\leq \frac{C}{(T-t)^4} + \int_X C |\nabla \Delta u|^2 + \frac{1}{4} |\nabla \oo{\nabla} \Delta u|^2.
\end{align*}
For the final term, we have
\begin{align*}
    \int_X 2\Real \ip{\nabla \Delta \tr_\omega(f^*\omega_Y), \oo{\nabla} \Delta u} &=- \int_X 2 \Delta \tr_\omega(f^*\omega_Y) \Delta \Delta u\\
    &\leq \int_X C(\Delta \tr_\omega(f^*\omega_Y))^2 + \frac{1}{4} |\nabla \oo{\nabla} \Delta u|^2.
\end{align*}
Combining these estimates yields
\begin{align*}
    \frac{d}{dt} \int_X |\nabla \Delta u|^2 &\leq \frac{C}{(T-t)^4} + \int_X C|\nabla \Delta u|^2 - \frac{1}{4} |\nabla \nabla \Delta u|^2 - \frac{1}{4} |\nabla \oo{\nabla} \Delta u|^2 + C(\Delta \tr_\omega(f^*\omega_Y))^2.
\end{align*}
It follows, by the Fundamental Theorem of Calculus, that for any $\tau \in [0,T)$ we have
\begin{align*}
    \int_X |\nabla \Delta u(\tau)|^2 \omega(\tau)^n - \int_X |\nabla \Delta u(0)|^2 \omega_0^n &\leq \frac{C}{(T-\tau)^3} - C\\
    &+ C\int_0^\tau \int_X (|\nabla \Delta u(t)|^2 + (\Delta \tr_{\omega(t)}(f^*\omega_Y))^2) \omega(t)^n dt\\
    &-\frac{1}{4} \int_0^\tau \int_X (|\nabla \nabla \Delta u(t)|^2 + |\nabla \oo{\nabla} \Delta u(t)|^2) \omega(t)^n dt.
\end{align*}
By applying (\ref{alternate_bound}), we see
\begin{align*}
    \int_X |\nabla \Delta u(\tau)|^2\omega(\tau)^n &+ \frac{1}{4} \int_0^\tau \int_X (|\nabla \nabla \Delta u(t)|^2 + |\nabla \oo{\nabla} \Delta u(t)|^2) \omega(t)^n dt\\ 
    &\leq \frac{C}{(T-\tau)^3} + C\int_0^\tau \int_X (\Delta \tr_{\omega(t)}(f^*\omega_Y))^2 \omega(t)^n dt.
\end{align*}
\end{proof}

\noindent If this result is applied to the Ricci curvature,  i.e. (\ref{intermediate_L4_estimate}) and the proof of (\ref{Ricci_bound}), we see that there exists a $C > 0$ such that for all $\tau \in [0,T)$ we have
\begin{equation}\label{actual_L4_Ricci_bound}
    \int_X |\Ric(\omega(\tau))|^4 \omega(\tau)^n \leq \frac{C}{(T-\tau)^8} + \frac{C}{(T-\tau)^4} \int_0^\tau \int_X (\Delta \tr_{\omega(t)}(f^*\omega_Y))^2 \omega(t)^n dt. 
\end{equation}
Also note that
\begin{align*}
    \int_0^\tau \int_X (\Delta \Delta u(t))^2 \omega(t)^n dt &= \int_0^\tau \int_X |\nabla \oo{\nabla} \Delta u(t)|^2 \omega(t)^n dt\\
    &\leq \frac{C}{(T-\tau)^3} + C\int_0^\tau \int_X (\Delta \tr_{\omega(t)}(f^*\omega_Y))^2 \omega(t)^n dt.  
\end{align*}

\begin{proposition}
There exists a constant $C > 0$ such that for all $t \in [0,T)$ we have
\begin{equation*}
    \int_X |\nabla \nabla \tr_\omega(f^*\omega_Y)|^2 \omega^n \leq \frac{C}{(T-t)^2} + \int_X (\Delta \tr_\omega (f^*\omega_Y))^2 \omega^n.
\end{equation*}
\end{proposition}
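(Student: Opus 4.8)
The plan is to re-run the Bochner-type integration by parts from the proof of Proposition~\ref{L2_unmixed_hessian_bound}, but now with $v := \tr_\omega(f^*\omega_Y)$ in place of $u$. This is legitimate since $\tr_\omega(f^*\omega_Y)$ is a smooth real-valued function and that computation only invokes a commutation relation, so it produces the identity
\begin{equation*}
    \int_X |\nabla\nabla v|^2 \,\omega^n = \int_X (\Delta v)^2 \,\omega^n - \int_X R_{j\oo{k}}\,\nabla_k v\,\nabla_{\oo{j}} v\,\omega^n
\end{equation*}
for $v = \tr_\omega(f^*\omega_Y)$, hence
\begin{equation*}
    \int_X |\nabla\nabla v|^2 \,\omega^n \leq \int_X (\Delta v)^2 \,\omega^n + \int_X |\Ric|\,|\nabla v|^2\,\omega^n .
\end{equation*}
Thus everything reduces to estimating the curvature remainder $\int_X |\Ric|\,|\nabla v|^2\,\omega^n$, which I claim is bounded by $\tfrac{C}{(T-t)^2}$ together with a contribution that is absorbed into the $\int_X(\Delta v)^2$ already present on the right.

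To control that remainder I would combine the ingredients already at hand: the differential inequality (\ref{lower_trace_bound}), $|\nabla v|^2 \leq C(\Delta v + |\Ric| + 1)$; the $L^2$-Ricci bound (\ref{Good_L2_Ricci_bound}), $\int_X|\Ric|^2 \leq \tfrac{C}{(T-t)^2}$, with $\norm{\Ric}_{L^1}\leq C\norm{\Rm}_{L^2}$ giving $\int_X |\Ric| \leq \tfrac{C}{T-t}$; and the gradient--trace bound Proposition~\ref{gradient_trace_bound}, $\int_X|\nabla v|^2\leq \tfrac{C}{T-t}$. By Cauchy--Schwarz, $\int_X|\Ric|\,|\nabla v|^2 \leq (\int_X|\Ric|^2)^{1/2}(\int_X|\nabla v|^4)^{1/2} \leq \tfrac{C}{T-t}(\int_X|\nabla v|^4)^{1/2}$, so it suffices to bound $\int_X|\nabla v|^4$. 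For this I would square (\ref{lower_trace_bound}) to get $|\nabla v|^4 \leq C((\Delta v)^2 + |\Ric|^2 + 1)$ — equivalently, integrate (\ref{lower_trace_bound}) against $|\nabla v|^2$, integrate the $\Delta v$-term by parts, and absorb $\int_X|\nabla v|^2|\nabla\nabla v|$, $\int_X|\nabla v|^2|\nabla\oo{\nabla}v|$ back via the identity above — yielding $\int_X|\nabla v|^4 \leq C\int_X(\Delta v)^2 + \tfrac{C}{(T-t)^2}$ after mopping up the lower-order pieces with Proposition~\ref{gradient_trace_bound} and (\ref{Good_L2_Ricci_bound}). Substituting back and applying Young's inequality then delivers the stated bound. (A variant for the curvature remainder is to plug the decomposition (\ref{Ricci_formula}) into $\int_X R_{j\oo{k}}\nabla_k v\,\nabla_{\oo{j}} v$ directly: the $e^{t-T}\omega$ piece has favourable sign, the $-f^*\omega_Y$ piece lies in $[-\int_X(\tr_\omega f^*\omega_Y)|\nabla v|^2,0]$ and is $O(\tfrac1{T-t})$ by Theorem~\ref{bounds}, and the $-i\d\dbar u$ piece is dominated by $\int_X|\nabla\oo{\nabla}u|\,|\nabla v|^2$, reduced again to the $L^4$-bound on $\nabla v$ via Proposition~\ref{L2_mixed_hessian_bound}.)

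The main obstacle is exactly this curvature remainder $\int_X |\Ric|\,|\nabla v|^2$: neither $|\nabla v|$ nor $|\Ric|$ is controlled pointwise, so it cannot be handled head-on and one is forced into the coupled relations among $\int_X|\nabla v|^4$, $\int_X|\Ric|\,|\nabla v|^2$ and $\int_X(\Delta v)^2$. The delicate part is the bookkeeping in the absorption step, arranging it so that $\int_X(\Delta \tr_\omega(f^*\omega_Y))^2$ is the only quantity that survives on the right-hand side while every other term collapses into $\tfrac{C}{(T-t)^2}$.
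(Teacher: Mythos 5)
Your proposal is correct and follows essentially the same route as the paper: the identical Bochner/integration-by-parts step reduces everything to the remainder $\int_X|\Ric|\,|\nabla\tr_\omega(f^*\omega_Y)|^2\,\omega^n$, which is then controlled using (\ref{lower_trace_bound}) together with the $L^2$- and $L^1$-bounds on $\Ric$. The paper is just slightly more direct --- it substitutes (\ref{lower_trace_bound}) pointwise into the remainder and applies Young's inequality to $|\Ric|\,\Delta\tr_\omega(f^*\omega_Y)$, avoiding your Cauchy--Schwarz detour through $\int_X|\nabla\tr_\omega(f^*\omega_Y)|^4$ --- but both arguments close in the same way.
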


\begin{proof}
In a similar vein as the proof of Proposition \ref{L2_unmixed_hessian_bound} one has
\begin{align*}
    \int_X |\nabla \nabla \tr_\omega(f^*\omega_Y)|^2 \leq \int_X (\Delta \tr_\omega(f^*\omega_Y))^2 + |\Ric| |\nabla \tr_\omega(f^*\omega_Y)|^2.
\end{align*}
Then using inequality (\ref{lower_trace_bound}), we have
\begin{align*}
    \int_X |\nabla \nabla \tr_\omega(f^*\omega_Y)|^2 &\leq \int_X (\Delta \tr_\omega(f^*\omega_Y))^2 + C|\Ric|(\Delta \tr_\omega(f^*\omega_Y) + |\Ric|+1)\\
    &\leq C\int_X (\Delta \tr_\omega(f^*\omega_Y))^2 + |\Ric|^2 + |\Ric|\\
    &\leq \frac{C}{(T-t)^2} + C\int_X (\Delta \tr_\omega (f^*\omega_Y))^2.
\end{align*}
\end{proof}

\noindent Finally, we can apply the previous two Propositions to derive estimates for the complex Hessians $\nabla \oo{\nabla} R$ and $\nabla \nabla R$ of the scalar curvature.

\begin{corollary}
There exists a $C > 0$ such that for all $\tau \in [0,T)$ we have
\begin{align*}
    \int_0^\tau \int_X (T-t)^2(|\nabla \oo{\nabla} R(\omega(t))|^2 &+ |\nabla \nabla R(\omega(t))|^2) \omega(t)^n dt\\
    &\leq \frac{C}{(T-\tau)^3} + C \int_0^\tau \int_X (\Delta \tr_{\omega(t)}(f^*\omega_Y))^2 \omega(t)^n dt.
\end{align*}
\end{corollary}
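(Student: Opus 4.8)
The plan is to mimic the proof of the preceding Corollary on $|\nabla R|^2$, but now one derivative higher. Starting from the scalar curvature decomposition (\ref{scalar_formula}), applying $\nabla\oo{\nabla}$ (resp. $\nabla\nabla$) and dividing by $1-e^{t-T}$, I would obtain
\begin{equation*}
    (1-e^{t-T})\nabla\oo{\nabla} R = -\nabla\oo{\nabla}\tr_\omega(f^*\omega_Y) - \nabla\oo{\nabla}\Delta u,
\end{equation*}
and likewise for $\nabla\nabla R$ with $\nabla\nabla$ in place of $\nabla\oo{\nabla}$. Since $1-e^{t-T}\geq c(T-t)$ near $T$, this yields the pointwise bound
\begin{equation*}
    (T-t)^2(|\nabla\oo{\nabla} R|^2 + |\nabla\nabla R|^2) \leq C\big(|\nabla\oo{\nabla}\tr_\omega(f^*\omega_Y)|^2 + |\nabla\nabla\tr_\omega(f^*\omega_Y)|^2 + |\nabla\oo{\nabla}\Delta u|^2 + |\nabla\nabla\Delta u|^2\big).
\end{equation*}

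Next I would integrate this over $X$ and then over $t\in[0,\tau]$, and control the four terms on the right using results already established. For the two Hessian-of-trace terms, note that $|\nabla\oo{\nabla} v|^2$ and $|\nabla\nabla v|^2$ differ only by a Ricci term (as in the proofs of Propositions \ref{L2_mixed_hessian_bound} and \ref{L2_unmixed_hessian_bound}), so both are dominated by $\int_X (\Delta\tr_\omega(f^*\omega_Y))^2 + |\Ric||\nabla\tr_\omega(f^*\omega_Y)|^2$; the immediately preceding Proposition gives exactly $\int_X|\nabla\nabla\tr_\omega(f^*\omega_Y)|^2 \leq \frac{C}{(T-t)^2} + C\int_X(\Delta\tr_\omega(f^*\omega_Y))^2$, and integrating the $\frac{C}{(T-t)^2}$ contribution in $t$ produces the $\frac{C}{(T-\tau)^3}$ term while the other contributes the time-integral of $(\Delta\tr_\omega(f^*\omega_Y))^2$. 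For the two Hessian-of-$\Delta u$ terms, the Proposition just proved bounds $\int_0^\tau\int_X(|\nabla\nabla\Delta u|^2 + |\nabla\oo{\nabla}\Delta u|^2)$ by precisely $\frac{C}{(T-\tau)^3} + C\int_0^\tau\int_X(\Delta\tr_\omega(f^*\omega_Y))^2$. Adding the contributions gives the claimed inequality, with $C$ independent of $\tau$, and one takes $\tau\to T$.

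The only mild subtlety is the factor of $(T-t)^2$: in the Hessian-of-$\Delta u$ estimate the bound is already uniform in the sense that the right side has no extra negative power of $(T-t)$ beyond $\frac{C}{(T-\tau)^3}$, so multiplying the integrand by $(T-t)^2 \leq T^2$ only helps. For the Hessian-of-trace terms the pointwise bound carries a loose $\frac{C}{(T-t)^2}$ which, when multiplied by the extra $(T-t)^2$ from the left-hand side, becomes simply $C$, and its time integral is $C\tau \leq CT$; this is in fact why the statement is phrased with the weight $(T-t)^2$ rather than with the $(T-t)^{-2}$ loss one would get bare. I expect no genuine obstacle here — the result is essentially a bookkeeping corollary of the two Propositions in this section — the only care needed is tracking which terms absorb the $(T-t)^2$ weight and confirming every constant is $\tau$-independent so the limit $\tau\to T$ is legitimate.
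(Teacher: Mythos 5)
Your proposal is correct and follows essentially the same route as the paper: differentiate (\ref{scalar_formula}) twice, absorb the $(T-t)^2$ weight into $(1-e^{t-T})^{-2}$, and invoke the two preceding Propositions of this section (for the trace term one can even use the exact identity $\int_X|\nabla\oo{\nabla}\tr_\omega(f^*\omega_Y)|^2=\int_X(\Delta\tr_\omega(f^*\omega_Y))^2$). The only quibble is the bookkeeping in your final paragraph: the $(T-t)^2$ weight is entirely consumed in the pointwise bound, so the $\frac{C}{(T-t)^2}$ from the trace-Hessian estimate integrates to $\frac{C}{T-\tau}\leq\frac{C}{(T-\tau)^3}$ rather than to $C\tau$ — but your second paragraph already handles this correctly.
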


\begin{proof}
Recall equation (\ref{scalar_formula}). From this it follows that
\begin{align*}
    (1-e^{t-T})^2 |\nabla \oo{\nabla} R|^2 &= |\nabla \oo{\nabla} \tr_\omega(f^*\omega_Y) + \nabla \oo{\nabla} \Delta u|^2\\
    &\leq (|\nabla \oo{\nabla} \tr_\omega(f^*\omega_Y)| + |\nabla \oo{\nabla} \Delta u|)^2\\
    &\leq 2|\nabla \oo{\nabla} \tr_\omega(f^*\omega_Y)|^2 + 2|\nabla \oo{\nabla} \Delta u|^2
\end{align*}
and likewise
\begin{equation*}
    (1-e^{t-T})^2|\nabla \nabla R|^2 \leq 2|\nabla \nabla \tr_\omega(f^*\omega_Y)|^2 + 2 |\nabla \nabla \Delta u|^2.
\end{equation*}
Therefore, for any $\tau \in [0,T)$ we have
\begin{align*}
    \int_0^\tau \int_X (T-t)^2 |\nabla \oo{\nabla} R(\omega(t))|^2\omega(t)^n dt &\leq C\int_0^\tau \int_X (|\nabla \oo{\nabla} \Delta u(t)|^2 + |\nabla \oo{\nabla} \tr_{\omega(t)}(f^*\omega_Y)|^2) \omega(t)^n dt\\
    &\leq \frac{C}{(T-\tau)^3} + C\int_0^\tau \int_X (\Delta \tr_{\omega(t)}(f^*\omega_Y))^2 \omega(t)^n dt,
\end{align*}
and likewise for $\nabla \nabla R$.
\end{proof}

\noindent Also note that
\begin{equation*}
    \int_X (\Delta R)^2 \omega^n = \int_X |\nabla \oo{\nabla} R|^2 \omega^n,
\end{equation*}
so that one can state a similar result for the Laplacian of the scalar curvature. 

\section{Conversion Between K\"ahler-Ricci Flows}

\noindent We conclude with the conversion of the $L^4$-like estimate of the Ricci curvature to one for the unnormalized flow. One can perform similar calculations for other estimates in Theorem \ref{main_theorem}.\\

\noindent The time and metric rescalings $s = e^t-1$ and $\hat{\omega}(s) = e^t \omega(t)$, lets one see that the normalized K\"ahler-Ricci flow (\ref{KRF}) is equivalent to the following unnormalized K\"ahler-Ricci flow
\begin{equation}\label{UKRF}\tag{UKRF}
    \frac{\d \hat{\omega}(s)}{\d s} = -\Ric(\hat{\omega}(s)), \quad \hat{\omega}(0) = \omega_0,
\end{equation}
where $s \in [0,S)$ for $S = e^T-1 < \infty$.\\
The $L^4$-like estimate on the Ricci-curvature, (\ref{Ricci_bound}), can be converted to an estimate for the unnormalized K\"ahler-Ricci flow as follows. For any $\tau \in [0,T)$ let $\sigma = e^\tau-1 \in [0,S)$, then
\begin{align*}
    C &\geq \int_0^\tau \int_X (T-t)^8 |\Ric (\omega(t))|^4_{\omega(t)} \omega(t)^n dt\\
    &\geq \int_0^\tau \int_X (1-e^{t-T})^8 |\Ric (\omega(t))|^4_{\omega(t)} \omega(t)^n dt\\
    &= \int_0^\sigma \int_X (1-e^{t-T})^8 |\Ric (\omega(t))|^4_{\omega(t)} \frac{1}{(1+s)^{n+1}} \hat{\omega}(s)^n ds\\
    &= \int_0^\sigma \int_X \frac{e^{-8T}(S-s)^8(1+s)^4}{(1+s)^{n+1}} |\Ric (\hat{\omega}(s))|^4_{\hat{\omega}(s)} \hat{\omega}(s)^n ds\\
    &\geq e^{-8T}\frac{1}{(1+S)^{n+1}} \int_0^\sigma \int_X (S-s)^8 |\Ric (\hat{\omega}(s))|_{\hat{\omega}(s)}^4 \hat{\omega}(s)^n ds.
\end{align*}
Therefore, there exists a $C > 0$ such that
\begin{equation*}
    \lim_{\sigma \to S} \int_0^\sigma \int_X (S-s)^8 |\Ric (\hat{\omega}(s))|_{\hat{\omega}(s)}^4 \hat{\omega}(s)^n ds \leq C.
\end{equation*}
That is, recall the existence of a holomorphic map $f: X \to Y$ and a K\"ahler metric $\omega_Y$ satisfying $[f^*\omega_Y] = [\omega(T)]$. This second assumption in terms of the unnormalized flow is still $[f^*\hat{\omega}_Y] = [\hat{\omega}(S)]$ where $\hat{\omega}_Y = e^T \omega_Y$ is still some K\"ahler metric on a neighbourhood of $Y$.

\section{Appendix}
\subsection{Commuting Covariant Derivatives}

\noindent Recall the notion of normal coordinates.

\begin{proposition}[Normal Coordinates]
Let $X$ be a K\"ahler manifold with K\"ahler metric $g$ and suppose $T$ a Hermitian tensor, that is, $\oo{T_{i\oo{j}}} = T_{\oo{j}i}$. Then for any $x \in X$ there exists local holomorphic coordinates, called \textbf{normal coordinates}, such that
\begin{equation*}
    g_{i\oo{j}}(x) = \delta_{i\oo{j}}, \quad \d_k g_{i\oo{j}}(x) = 0, \quad \Gamma^k_{ij}(x) = 0, \quad T_{i\oo{j}}(x) = \lambda_i \delta_{i\oo{j}},
\end{equation*}
for some $\lambda_i \in \R$.
\end{proposition}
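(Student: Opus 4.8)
The plan is to produce the coordinates as a composition of three holomorphic changes of coordinates centred at $x$: a $\C$-linear change normalising $g$ at $x$, a unitary change diagonalising $T$ at $x$, and a holomorphic quadratic change killing the first derivatives of $g$ at $x$. The K\"ahler hypothesis will enter only in the last step; everything else is linear algebra.

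First I would start with arbitrary local holomorphic coordinates $z=(z^1,\dots,z^n)$ with $z(x)=0$. Since $(g_{i\bar j}(x))$ is a positive-definite Hermitian matrix there is $A\in GL(n,\C)$ with $A^*(g_{i\bar j}(x))A=I$, and after the induced $\C$-linear change of coordinates one may assume $g_{i\bar j}(x)=\delta_{i\bar j}$. A further $\C$-linear change $w=Uz$ transforms $g_{i\bar j}$ and its derivatives tensorially and preserves $g_{i\bar j}(x)=\delta_{i\bar j}$ precisely when $U\in U(n)$; under such a $U$ the Hermitian matrix $(T_{i\bar j}(x))$ transforms as $T(x)\mapsto U^*T(x)U$, so by the spectral theorem for Hermitian matrices $U$ can be chosen so that $T_{i\bar j}(x)=\lambda_i\delta_{i\bar j}$ with $\lambda_i\in\R$. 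At this stage $g_{i\bar j}(x)=\delta_{i\bar j}$ and $T_{i\bar j}(x)=\lambda_i\delta_{i\bar j}$ hold simultaneously.

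The last step is to set $z^p=w^p-\tfrac12\Gamma^p_{jk}(x)\,w^jw^k+O(|w|^3)$, where $\Gamma^p_{jk}=g^{p\bar l}\partial_j g_{k\bar l}$; the K\"ahler identity $\partial_j g_{k\bar l}=\partial_k g_{j\bar l}$ makes $\Gamma^p_{jk}$ symmetric in $j,k$, so this quadratic correction is well defined, and its differential at $x$ is the identity, hence it is a local biholomorphism that leaves every tensor value at $x$ — in particular $g_{i\bar j}(x)$ and $T_{i\bar j}(x)$ — unchanged. I would then differentiate the transformation law $\tilde g_{i\bar j}(w)=g_{p\bar q}(z(w))\,(\partial z^p/\partial w^i)\,\overline{(\partial z^q/\partial w^j)}$ in $w^m$ at $w=0$, using $\partial z^p/\partial w^i|_{0}=\delta^p_i$ and $\partial^2 z^p/\partial w^i\partial w^m|_{0}=-\Gamma^p_{im}(x)$, to obtain $\partial_m\tilde g_{i\bar j}(0)=\partial_m g_{i\bar j}(x)-g_{p\bar j}(x)\Gamma^p_{im}(x)$. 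Since $g_{p\bar j}\Gamma^p_{im}=\partial_i g_{m\bar j}$, which by the K\"ahler identity equals $\partial_m g_{i\bar j}$, the right-hand side vanishes, so $\partial_m\tilde g_{i\bar j}(0)=0$; conjugate (Hermitian) symmetry then yields $\partial_{\bar m}\tilde g_{i\bar j}(0)=0$ as well, and $\Gamma^k_{ij}(x)=g^{k\bar l}(x)\partial_i g_{j\bar l}(x)=0$ follows at once.

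I expect the only genuine obstacle to lie in this last step: one must check that the composite map is a bona fide local biholomorphism (automatic, since its Jacobian at $x$ is the identity), that it disturbs neither $g(x)$ nor $T(x)$ (again because that Jacobian is the identity), and — the essential input — that the K\"ahler symmetry $\Gamma^p_{jk}=\Gamma^p_{kj}$ both legitimises the symmetric quadratic correction and produces the exact cancellation above. The remaining ingredients are routine: positive-definiteness of $g(x)$ for the initial normalisation and the spectral theorem for the diagonalisation of $T(x)$, the two being compatible because the residual freedom after normalising $g(x)$ is exactly $U(n)$.
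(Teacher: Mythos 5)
Your argument is correct and is the standard construction: a linear change normalising $g(x)$, a residual unitary rotation diagonalising the Hermitian matrix $(T_{i\bar j}(x))$ via the spectral theorem, and the quadratic holomorphic correction $z^p=w^p-\tfrac12\Gamma^p_{jk}(x)w^jw^k$, whose legitimacy and effect both rest on the K\"ahler symmetry $\partial_j g_{k\bar l}=\partial_k g_{j\bar l}$. The paper states this proposition as a recalled standard fact and supplies no proof of its own, so there is nothing to compare against; your computation of $\partial_m\tilde g_{i\bar j}(0)$ and the resulting cancellation are complete and correct.
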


\noindent Additionally, recall that the components of the covariant derivative of a tensor can be expressed in local coordinates in the following manner.

\begin{proposition}
Let $\nabla$ and $\oo{\nabla}$ denote the $(1,0)$ and $(0,1)$ components of the Chern connection on the K\"ahler manifold $(X,\omega)$. Suppose
\begin{equation*}
    T = T_{i_1 \dots i_p \oo{j}_1 \dots \oo{j}_q} dz^{i_1} \otimes \dots \otimes dz^{i_p} \otimes d \oo{z}^{j_1} \otimes \dots \otimes d\oo{z}^{j_q}
\end{equation*}
and denote
\begin{align*}
    \nabla_k T_{i_1 \dots i_p \oo{j}_1 \dots \oo{j}_q} &= (\nabla T)(\d_k, \d_{i_1}, \dots, \d_{i_p}, \dbar_{j_1}, \dots, \dbar_{j_q}),\\
    \nabla_{\oo{l}} T_{i_1 \dots i_p \oo{j}_1 \dots \oo{j}_q} &= (\oo{\nabla} T)(\dbar_l, \d_{i_1}, \dots, \d_{i_p}, \dbar_{j_1}, \dots, \dbar_{j_q}).
\end{align*}
Then in any local holomorphic coordinates we have
\begin{align*}
    \nabla_k T_{i_1 \dots i_p \oo{j}_1 \dots \oo{j}_q} &= \d_k T_{i_1 \dots i_p \oo{j}_1 \dots \oo{j}_q} - \sum_{\alpha=1}^p \Gamma^{r}_{ki_\alpha} T_{i_1 \dots r \dots i_p \oo{j}_1 \dots \oo{j}_q},\\
    \nabla_{\oo{l}} T_{i_1 \dots i_p \oo{j}_1 \dots \oo{j}_q} &= \dbar_l T_{i_1 \dots i_p \oo{j}_1 \dots \oo{j}_q} - \sum_{\beta=1}^q \Gamma^{\oo{s}}_{\oo{l} \oo{j}_\beta} T_{i_1 \dots i_p \oo{j}_1 \dots \oo{s} \dots \oo{j}_q}.
\end{align*}
\end{proposition}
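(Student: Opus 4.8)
The identity is the standard local expression for the Chern connection extended to tensors, so the plan is essentially bookkeeping. First I would recall the structure of the Chern connection $\nabla$ on the holomorphic tangent bundle $T^{1,0}X$: it is the unique connection compatible with the Hermitian metric $g$ whose $(0,1)$-part is $\dbar$, and in local holomorphic coordinates its connection matrix has entries $\theta^k_j = \Gamma^k_{ij}\,dz^i$ with $\Gamma^k_{ij} = g^{k\oo{l}}\d_i g_{j\oo{l}}$. The crucial structural fact is that these symbols, together with their conjugates $\Gamma^{\oo{k}}_{\oo{i}\oo{j}} = \oo{\Gamma^k_{ij}}$, are the only nonvanishing Christoffel symbols: every ``mixed'' symbol such as $\Gamma^k_{i\oo{j}}$, $\Gamma^k_{\oo{i}j}$, $\Gamma^{\oo{s}}_{k\oo{j}}$ or $\Gamma^r_{\oo{l}i}$ vanishes. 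On a K\"ahler manifold $\nabla$ also coincides with the complexified Levi-Civita connection, but only this Chern-connection structure is used.

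Second, I would extend $\nabla$ to $(T^{1,0}X)^*$, to $(T^{0,1}X)^*$ and to their tensor products by the Leibniz rule. For the purely covariant tensor $T$ this produces the familiar rule in which $\nabla_k$ differentiates the component and subtracts one Christoffel term for each index; that is, $\nabla_k T_{i_1\dots i_p\oo{j}_1\dots\oo{j}_q} = \d_k T_{i_1\dots i_p\oo{j}_1\dots\oo{j}_q} - \sum_{\alpha}\Gamma^r_{ki_\alpha}T_{i_1\dots r\dots i_p\oo{j}_1\dots\oo{j}_q} - \sum_{\beta}\Gamma^{\oo{s}}_{k\oo{j}_\beta}T_{i_1\dots i_p\oo{j}_1\dots\oo{s}\dots\oo{j}_q}$, and the analogous expression for $\nabla_{\oo{l}}$ obtained by replacing $\d_k$ and $\Gamma^{\bullet}_{k\bullet}$ by $\dbar_l$ and $\Gamma^{\bullet}_{\oo{l}\bullet}$. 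Now I would invoke the vanishing of the mixed symbols: in the expression for $\nabla_k$ the coefficients $\Gamma^{\oo{s}}_{k\oo{j}_\beta}$ are mixed and hence zero, so only the sum over the holomorphic indices $i_\alpha$ survives; dually, in the expression for $\nabla_{\oo{l}}$ the coefficients $\Gamma^r_{\oo{l}i_\alpha}$ are mixed and vanish, leaving only the sum over the anti-holomorphic indices $\oo{j}_\beta$. This is precisely the pair of displayed identities in the statement.

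There is no real obstacle here; the only points requiring care are the sign conventions — the minus signs come from passing to the dual bundles — and keeping straight which Christoffel symbols count as ``mixed'' and therefore vanish, equivalently that the $(1,0)$-part of the Chern connection annihilates the coframe $d\oo{z}^j$ and its $(0,1)$-part annihilates $dz^i$. I would also remark that the K\"ahler hypothesis is not actually needed for this particular identity, which holds on any Hermitian manifold, but it is the natural setting in which to pair it with the Normal Coordinates proposition above: at the centre of normal coordinates every $\Gamma$ vanishes, so there $\nabla_k$ agrees with $\d_k$ and $\nabla_{\oo{l}}$ with $\dbar_l$, which is exactly the fact that makes the integration-by-parts computations throughout Sections 3 and 4 clean.
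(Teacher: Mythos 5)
Your proposal is correct and is the standard argument; the paper in fact states this proposition without proof, treating it as a recalled fact, and your derivation (Leibniz extension of the Chern connection to covariant tensors plus the vanishing of all mixed Christoffel symbols) is exactly the computation one would write down to justify it. Your side remarks — that the K\"ahler hypothesis is not needed here and that the minus signs come from dualizing — are also accurate.
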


\noindent We use these two tools in computing the following commutation relations.

\begin{lemma}\label{commutation_relations}
We have the following commutation relations under normal coordinates
\begin{align*}
    \nabla_i \nabla_{\oo{i}} \nabla_{\oo{j}} \nabla_k v &= \nabla_{\oo{j}} \nabla_k \Delta v + R_{i \oo{j}} \nabla_{\oo{i}} \nabla_k v + R_{k\oo{j}l\oo{i}} \nabla_i \nabla_{\oo{l}} v,\\
    \nabla_{\oo{i}} \nabla_i \nabla_j \nabla_k v &= \nabla_j \nabla_k \Delta v + R_{j \oo{i}} \nabla_i \nabla_{k} v + R_{k\oo{i}}\nabla_j\nabla_{i} v + R_{j\oo{l}k\oo{i}} \nabla_i \nabla_l v +\nabla_j R_{k \oo{i}} \nabla_i v.
\end{align*}
\end{lemma}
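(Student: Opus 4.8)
The plan is to verify both relations by a direct pointwise computation: fix $x\in X$, work in normal coordinates at $x$ (as recalled above), so that the Christoffel symbols vanish at $x$ while their first derivatives encode the curvature --- for the Chern connection of a K\"ahler metric, $\d_k\d_{\oo{j}}g_{i\oo{l}}\big|_x=-R_{k\oo{j}i\oo{l}}$, equivalently a single barred derivative of $\Gamma$ at $x$ is a curvature component --- and, if convenient, also diagonalise the complex Hessian $\nabla\oo{\nabla}v$ at $x$, as permitted by that proposition. The bookkeeping then rests on four elementary rules for the Chern connection of a K\"ahler metric: \textbf{(R1)} on a function $v$ the mixed second derivative is an honest partial derivative, $\nabla_i\nabla_{\oo{j}}v=\nabla_{\oo{j}}\nabla_i v=\d_i\d_{\oo{j}}v$, so that $\Delta v=\sum_i\nabla_i\nabla_{\oo{i}}v=\sum_i\nabla_{\oo{i}}\nabla_i v$; \textbf{(R2)} on an arbitrary tensor $[\nabla_i,\nabla_j]=0$ and $[\nabla_{\oo{i}},\nabla_{\oo{j}}]=0$, since the $(2,0)$- and $(0,2)$-parts of the curvature vanish; \textbf{(R3)} the Ricci identities for $[\nabla_i,\nabla_{\oo{j}}]$ acting on a $(1,0)$-form and on a $(0,1)$-form, each producing a single curvature factor contracted into the free index (with the sign fixed by the convention used above), extended to higher tensors by the Leibniz rule; and \textbf{(R4)} the contracted second Bianchi identity, equivalently closedness of the Ricci form ($\nabla_k R_{j\oo{l}}=\nabla_j R_{k\oo{l}}$ and $\sum_i\nabla_i R_{k\oo{i}j\oo{l}}=\nabla_k R_{j\oo{l}}$), used to simplify any term carrying a derivative of the curvature.

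For the first identity I begin with $\nabla_i\nabla_{\oo{i}}\nabla_{\oo{j}}\nabla_k v$ and use (R1) to rewrite the inner complex Hessian $\nabla_{\oo{j}}\nabla_k v$ as $\nabla_k\nabla_{\oo{j}}v$. I then push the traced operator $\nabla_i\nabla_{\oo{i}}$ inward past the remaining $\nabla_k$. Commuting $\nabla_{\oo{i}}$ across $\nabla_k$ --- which now sees the $(0,1)$-form $\oo{\nabla}v$ --- produces, by (R3), a curvature term (schematically $\Rm\ast\oo{\nabla}v$ in Hamilton's $\ast$-notation); commuting the outer $\nabla_i$ across $\nabla_k$ is free by (R2), but it must also differentiate that curvature term, splitting it into a $\nabla(\Rm)\ast\oo{\nabla}v$ piece and a $\Rm\ast\nabla\oo{\nabla}v$ piece. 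After invoking the K\"ahler symmetries of $\Rm$ and relabelling the traced indices, the latter is precisely $R_{k\oo{j}l\oo{i}}\nabla_i\nabla_{\oo{l}}v$. The remaining scalar-type piece $\nabla_i\nabla_k\nabla_{\oo{i}}\nabla_{\oo{j}}v$ is then reorganised with one further pass of (R1)--(R3), sliding $\nabla_i\nabla_{\oo{i}}$ onto $v$ to form $\Delta v$; this produces $\nabla_{\oo{j}}\nabla_k\Delta v$ together with the Ricci term $R_{i\oo{j}}\nabla_{\oo{i}}\nabla_k v$. Finally the $\nabla(\Rm)$ contributions encountered along the way cancel by (R4), which gives the first displayed identity.

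The second identity is obtained in exactly the same way, starting from $\nabla_{\oo{i}}\nabla_i\nabla_j\nabla_k v$. The only structural difference is that the innermost object $\nabla_j\nabla_k v$ is now a holomorphic $(2,0)$-tensor, so transporting $\nabla_{\oo{i}}$ inward past \emph{both} $\nabla_j$ and $\nabla_k$ throws off two curvature factors, one on the $j$-slot and one on the $k$-slot, which --- after tracing against the outer $\nabla_i$ (free to move by (R2)) and sliding $\nabla_i\nabla_{\oo{i}}$ onto $v$ --- reassemble into the two Ricci terms $R_{j\oo{i}}\nabla_i\nabla_k v+R_{k\oo{i}}\nabla_j\nabla_i v$, the full-curvature term $R_{j\oo{l}k\oo{i}}\nabla_i\nabla_l v$, and the main term $\nabla_j\nabla_k\Delta v$. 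This time, however, the $\nabla(\Rm)$ terms generated when the surviving $\nabla_j$ crosses a curvature factor do \emph{not} cancel completely: applying (R4) leaves exactly $\nabla_j R_{k\oo{i}}\nabla_i v$, the extra term in the statement.

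I expect the only real difficulty to be the bookkeeping: tracking which index slot each curvature factor lands on through the repeated commutations, fixing every sign consistently with the curvature convention in use, and --- the subtlest point --- recognising via (R4) that the various $\nabla(\Rm)$ terms spawned whenever a $(1,0)$-derivative passes over a curvature factor recombine into nothing for the first identity but into the single residual $\nabla_j R_{k\oo{i}}\nabla_i v$ for the second. Equivalently, if one prefers to run the argument by brute force directly in normal coordinates --- expanding all four covariant derivatives of $v$ and discarding the Christoffel symbols that vanish at $x$ --- the same issue reappears as the need to follow cleanly the one surviving first derivative of the Christoffel symbols, i.e.\ the curvature, through four successive differentiations.
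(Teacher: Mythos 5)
Your plan is correct and matches the paper's proof in substance: the paper verifies the identities by exactly the brute-force normal-coordinate expansion you mention at the end, with the curvature entering through $\dbar\Gamma$ and the derivative-of-curvature term handled by the contracted second Bianchi identity ($\d_i R_{j}{}^{l}{}_{k\oo{i}} \nabla_l v = \nabla_k R_{j\oo{l}}\nabla_l v = \nabla_j R_{k\oo{l}}\nabla_l v$), which is precisely your (R4). Your structural observations — that the $\nabla\Rm$ contributions cancel in the first identity but leave the single residue $\nabla_j R_{k\oo{i}}\nabla_i v$ in the second, essentially because $\nabla_{\oo{j}}\nabla_k v$ is Christoffel-free while $\nabla_j\nabla_k v$ is not — are exactly what the paper's computation exhibits.
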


\begin{proof}
For the second commutation relation we have the following calculation under normal coordinates
\begin{align*}
    \nabla_{\oo{i}} \nabla_i \nabla_j \nabla_k v &= \dbar_i \nabla_i \nabla_j \nabla_k v\\
    &= \dbar_i (\d_i \nabla_j \nabla_k v - \Gamma^l_{ij} \nabla_l \nabla_k v-\Gamma^l_{ik} \nabla_j \nabla_l v)\\
    &= \dbar_i \d_i (\d_j \nabla_k v - \Gamma^{l}_{jk} \nabla_l v) - (\dbar_i \Gamma^l_{ij}) \nabla_l \nabla_k v - (\dbar_i \Gamma^l_{ik}) \nabla_j \nabla_l v\\
    &= \dbar_i \d_i \d_j \d_k v - \dbar_i \d_i (\Gamma^{l}_{jk} \nabla_l v) + R_{j\oo{l}} \nabla_l \nabla_k v + R_{k\oo{l}} \nabla_j \nabla_l v
\end{align*}
where
\begin{align*}
    -\dbar_i \d_i (\Gamma^l_{jk} \nabla_l v) &= -(\dbar_i \d_i \Gamma^l_{jk}) \nabla_l v -(\dbar_i \Gamma^l_{jk}) \d_i \nabla_l v - (\d_i \Gamma^l_{jk}) \dbar_i \nabla_l v\\
    &= \d_i R_{j}{}^{l}{}_{k\oo{i}} \nabla_l v + R_{j\oo{l}k\oo{i}} \nabla_i \nabla_l v - (\d_i \Gamma^l_{jk}) \nabla_{\oo{i}} \nabla_l v\\
    &= \nabla_k R_{j\oo{l}} \nabla_l v + R_{j\oo{l}k\oo{i}} \nabla_i \nabla_l v - (\d_i \Gamma^l_{jk}) \nabla_{\oo{i}} \nabla_l v.
\end{align*}
Also,
\begin{align*}
    \nabla_j \nabla_k \Delta v &= \d_j \d_k (g^{i\oo{l}} \d_i \dbar_l v)\\
    &= (\d_j \d_k g^{i\oo{l}}) \d_i \dbar_l v + g^{i\oo{l}}\d_j \d_k \d_i \dbar_l v\\
    &= -g^{i\oo{q}} g^{p\oo{l}} (\d_j \d_k g_{p\oo{q}}) \nabla_i \nabla_{\oo{l}} v + \d_j \d_k \d_i \dbar_i v\\
    &= -g^{p\oo{l}}(\d_j \Gamma^{i}_{kp}) \nabla_i \nabla_{\oo{l}} v + \d_j \d_k \d_i \dbar_i v\\
    &=-(\d_j \Gamma^{i}_{kl}) \nabla_i \nabla_{\oo{l}} v + \d_j \d_k \d_i \dbar_i v.
\end{align*}
Therefore,
\begin{align*}
    \nabla_{\oo{i}} \nabla_i \nabla_j \nabla_k v &= \nabla_j \nabla_k \Delta v + (\d_j \Gamma^i_{kl}) \nabla_i \nabla_{\oo{l}} v+\nabla_k R_{j\oo{l}} \nabla_l v + R_{j\oo{l}k\oo{i}} \nabla_i \nabla_l v\\
    &-(\d_i \Gamma^l_{jk})\nabla_{\oo{i}} \nabla_l v +R_{j\oo{l}} \nabla_l \nabla_k v + R_{k\oo{l}} \nabla_j \nabla_l v
\end{align*}
and the result follows since $\d_q \Gamma^k_{ij} = \d_q (g^{k\oo{l}} \d_i g_{j\oo{l}}) = g^{k\oo{l}} \d_q\d_i g_{j\oo{l}} = \d_i \Gamma^k_{qj}$ in normal coordinates.
\end{proof}

\subsection{Additional Equations}

\noindent We provide a proof of Bochner's formula.

\begin{proposition}[Bochner's Formula]
Let $v: X \to \R$ be an arbitrary, smooth, real function. Then
\begin{equation}\label{Bochner_formula}
    \Delta |\nabla v|^2 = |\nabla \nabla v|^2 + |\nabla \oo{\nabla} v|^2 + \ip{\nabla \Delta v, \oo{\nabla} v} + \ip{\nabla v, \oo{\nabla} \Delta v} + \ip{\Ric, \oo{\nabla} v \otimes \nabla v}.
\end{equation}
\end{proposition}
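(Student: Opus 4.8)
The plan is to establish (\ref{Bochner_formula}) by a direct computation at an arbitrary point, using normal coordinates and keeping careful track of the curvature terms that appear when covariant derivatives are interchanged. Fix $x \in X$ and choose normal coordinates there, so that $|\nabla v|^2 = \sum_i \nabla_i v\, \nabla_{\oo{i}} v$ and $\Delta = \sum_k \nabla_k \nabla_{\oo{k}}$ when acting on functions. Differentiating twice with the product rule gives
\begin{equation*}
  \Delta |\nabla v|^2 = \sum_{i,k}\Big( (\nabla_k \nabla_{\oo{k}}\nabla_i v)\,\nabla_{\oo{i}} v + (\nabla_{\oo{k}}\nabla_i v)(\nabla_k \nabla_{\oo{i}} v) + (\nabla_k \nabla_i v)(\nabla_{\oo{k}}\nabla_{\oo{i}} v) + \nabla_i v\,(\nabla_k \nabla_{\oo{k}}\nabla_{\oo{i}} v)\Big).
\end{equation*}
The two middle terms are dealt with at once: using the symmetry $\nabla_{\oo{k}}\nabla_i v = \nabla_i \nabla_{\oo{k}} v$ of the mixed Hessian on functions and the conjugation identity $\oo{\nabla_k \nabla_i v} = \nabla_{\oo{k}}\nabla_{\oo{i}} v$, they sum to $|\nabla \oo{\nabla} v|^2 + |\nabla \nabla v|^2$.

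It then remains to treat the two outer terms. For the first, I would commute freely: $\nabla_{\oo{k}}\nabla_i v = \nabla_i \nabla_{\oo{k}} v$, and then, since on a K\"ahler manifold the $(1,0)$--$(1,0)$ commutator vanishes (the Chern curvature is of type $(1,1)$ and the torsion vanishes), $\nabla_k \nabla_i (\nabla_{\oo{k}} v) = \nabla_i \nabla_k (\nabla_{\oo{k}} v)$; hence $\nabla_k \nabla_{\oo{k}}\nabla_i v = \nabla_i \Delta v$ and this term contributes exactly $\ip{\nabla \Delta v, \oo{\nabla} v}$, with no curvature correction. For the last term I would again commute the two anti-holomorphic derivatives, $\nabla_k \nabla_{\oo{k}}\nabla_{\oo{i}} v = \nabla_k \nabla_{\oo{i}}\nabla_{\oo{k}} v$, but now moving $\nabla_k$ past the conjugate index forces the use of the mixed commutation (Ricci) identity on a $(0,1)$-form, which is precisely where curvature enters; after summing over $k$ the curvature contraction collapses to the Ricci tensor, so this term becomes $\ip{\nabla v, \oo{\nabla} \Delta v} + \ip{\Ric, \oo{\nabla} v \otimes \nabla v}$. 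Adding the four contributions yields (\ref{Bochner_formula}).

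The single delicate point — and essentially the only obstacle — is the sign and index bookkeeping in that mixed commutation step: one must check that the contracted curvature comes out as $+\ip{\Ric, \oo{\nabla} v \otimes \nabla v}$ and not its negative. This is the second-order analogue of the identities in Lemma \ref{commutation_relations}, and the same normal-coordinate device used there (expanding $\nabla_{\oo{i}} v$ and its covariant derivatives in terms of partial derivatives and Christoffel symbols, and invoking the normal-coordinate expression of the curvature in terms of $\dbar \Gamma$ together with the K\"ahler symmetry $\d_i \Gamma^{k}_{j\ell} = \d_j \Gamma^{k}_{i\ell}$) produces the Ricci term with the correct sign. Everything else is routine product-rule differentiation.
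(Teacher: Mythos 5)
Your argument is correct, and the sign you flag does work out: in normal coordinates $\sum_k \nabla_k\nabla_{\oo{k}}\nabla_{\oo{i}} v - \nabla_{\oo{i}}\Delta v = -\sum_k \bigl(\d_k \Gamma^{\oo{m}}_{\oo{k}\oo{i}}\bigr)\nabla_{\oo{m}} v = R_{m\oo{i}}\nabla_{\oo{m}} v$ (the conjugate of the contraction $\sum_k \dbar_k\Gamma^{m}_{ki}=-R_{i\oo{m}}$ used in the proof of Lemma \ref{commutation_relations}), which after pairing with $\nabla_i v$ gives $+\ip{\Ric,\oo{\nabla}v\otimes\nabla v}$; and your observation that the other outer term $\sum_k\nabla_k\nabla_{\oo{k}}\nabla_i v=\nabla_i\Delta v$ carries no curvature correction is right because the Chern curvature of a K\"ahler metric has no $(2,0)$-part. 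Your route differs from the paper's in where the curvature is made to appear. The paper writes $\Delta|\nabla v|^2 = g^{i\oo{j}}\d_i\dbar_j\bigl(g^{k\oo{l}}\d_k v\,\dbar_l v\bigr)$ and never commutes covariant derivatives: the Ricci term arises from the second derivative $\d_i\dbar_j g^{k\oo{l}}$ of the inverse metric hitting the contraction inside $|\nabla v|^2$, while the third-order terms are identified with $\ip{\nabla\Delta v,\oo{\nabla}v}$ and $\ip{\nabla v,\oo{\nabla}\Delta v}$ directly because first derivatives of the metric vanish at the centre of normal coordinates. You instead push the same information through the Ricci commutation identity for a $(0,1)$-form, i.e.\ the second-order analogue of Lemma \ref{commutation_relations}, as you note. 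The two computations are equivalent; the paper's is slightly shorter to write down since all Christoffel symbols drop out at once, while yours is more transparent about \emph{why} exactly one of the two third-order terms acquires a curvature contribution and generalizes more readily to Bochner formulas for tensors. To make your version fully rigorous you would only need to record the commutator identity $[\nabla_k,\nabla_{\oo{j}}]a_{\oo{l}}$ with its sign convention once, which the normal-coordinate device you describe does.
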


{\allowdisplaybreaks
\begin{proof}
Using normal coordinates,
\begin{align*}
    \Delta |\nabla v|^2 &= g^{i\oo{j}} \d_i \dbar_j (g^{k\oo{l}} \d_k v \dbar_l v)\\
    &= g^{i\oo{j}} (\d_i \dbar_j g^{k\oo{l}}) \d_k v \dbar_l v + g^{i\oo{j}} g^{k\oo{l}}  \d_i \dbar_j (\d_k v \dbar_l v)\\
    &= -g^{i\oo{j}} g^{p\oo{l}} g^{k\oo{q}} (\d_i \dbar_j g_{p\oo{q}}) \d_k v \dbar_l v\\
    &+ g^{i\oo{j}} g^{k\oo{l}} [(\d_i \dbar_j\d_k v) \dbar_l v + (\dbar_j\d_k v) (\d_i \dbar_l v)\\
    &+ (\d_i \d_k v) (\dbar_j \dbar_l v) + \d_k v (\d_i \dbar_j\dbar_l v)]\\
    &= g^{i\oo{j}} g^{p\oo{l}} g^{k\oo{q}} R_{i\oo{j}p\oo{q}} \d_k v \dbar_l v + g^{k\oo{l}} \d_k (g^{i\oo{j}}\d_i \dbar_j v) \dbar_l v + g^{i\oo{j}} g^{k\oo{l}} (\dbar_j\d_k v) (\d_i \dbar_l v)\\
    &+ g^{i\oo{j}} g^{k\oo{l}} (\d_i \d_k v) (\dbar_j \dbar_l v) + g^{k\oo{l}} (\d_k v) \dbar_l (g^{i\oo{j}}\d_i \dbar_j v)\\
    &= g^{p\oo{l}} g^{k\oo{q}} R_{p\oo{q}} \nabla_k v \nabla_{\oo{l}} v + g^{k\oo{l}} \nabla_k (\Delta v) \nabla_{\oo{l}} v + g^{i\oo{j}} g^{k\oo{l}} (\nabla_i \nabla_{\oo{l}} v)(\nabla_{\oo{j}}\nabla_k v) \\
    &+ g^{i\oo{j}} g^{k\oo{l}} (\nabla_i \nabla_k v) (\nabla_{\oo{j}} \nabla_{\oo{l}} v) + g^{k\oo{l}} (\nabla_k v) \nabla_{\oo{l}} (\Delta v)\\ 
    &= \ip{\Ric, \oo{\nabla} v \otimes \nabla v} +\ip{\nabla \Delta v, \oo{\nabla} v} + |\nabla \oo{\nabla} v|^2
    + |\nabla \nabla v|^2 + \ip{\nabla v, \oo{\nabla} \Delta v}.
\end{align*}
\end{proof}
}

\noindent In a similar manner one can calculate the following evolution equation.

\begin{proposition}
The norm of the Ricci curvature evolves according to
\begin{equation}\label{Ricci_norm_evolution}
    \parabolic{} |\Ric|^2 = 2|\Ric|^2-2|\nabla \Ric|^2 +2\Real \ip{\Rm, \oo{\Ric \otimes \Ric}}.
\end{equation}
\end{proposition}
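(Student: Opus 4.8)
The plan is to compute $\d_t |\Ric|^2$ and $\Delta |\Ric|^2$ separately, exactly as was done for the scalar Bochner formula (\ref{Bochner_formula}), and then subtract. Write $|\Ric|^2 = g^{i\oo{l}} g^{k\oo{j}} R_{i\oo{j}} R_{k\oo{l}}$. Differentiating $R_{i\oo{j}} = -\d_i \dbar_j \log \det(g_{k\oo{l}})$ along (\ref{KRF}) and using $\d_t \log \det(g_{k\oo{l}}) = g^{k\oo{l}} \d_t g_{k\oo{l}} = -R - n$ gives $\d_t R_{i\oo{j}} = \d_i \dbar_j(R+n) = \nabla_i \nabla_{\oo{j}} R$, while from $0 = \d_t(g^{i\oo{j}} g_{k\oo{j}})$ one gets $\d_t g^{i\oo{j}} = R^{i\oo{j}} + g^{i\oo{j}}$. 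Hence $\d_t |\Ric|^2 = 2|\Ric|^2 + 2\,\Real\ip{\nabla \oo{\nabla} R, \Ric} + 2\mathcal{Q}$, where $\mathcal{Q} := R^{i\oo{l}} R_{i\oo{j}} R^{j}{}_{\oo{l}}$ is the cubic Ricci contraction produced by the two factors of $\d_t g^{-1}$, and the $+2|\Ric|^2$ comes from the $+g^{i\oo{j}}$ part of $\d_t g^{-1}$ (this is where the normalization $-\omega(t)$ enters).

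Next I would establish, using the second Bianchi identity $\nabla_k R_{i\oo{j}} = \nabla_i R_{k\oo{j}}$ and $\nabla_{\oo{l}} R_{i\oo{j}} = \nabla_{\oo{j}} R_{i\oo{l}}$ together with a commutation relation for covariant derivatives in the spirit of Lemma \ref{commutation_relations}, the static identity $\Delta R_{i\oo{j}} = \nabla_i \nabla_{\oo{j}} R - R_{i\oo{j}k\oo{l}} R^{l\oo{k}} + R_{i\oo{k}} R^{k}{}_{\oo{j}}$, equivalently $\nabla_i \nabla_{\oo{j}} R = \Delta R_{i\oo{j}} + R_{i\oo{j}k\oo{l}} R^{l\oo{k}} - R_{i\oo{k}} R^{k}{}_{\oo{j}}$. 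Contracting this against $\Ric$ turns the scalar-Hessian term in $\d_t|\Ric|^2$ into $2\,\Real\ip{\nabla\oo{\nabla} R, \Ric} = 2\,\Real\ip{\Delta \Ric, \Ric} + 2\,\Real\ip{\Rm, \oo{\Ric \otimes \Ric}} - 2\mathcal{Q}$, the last term because $R_{i\oo{k}} R^{k}{}_{\oo{j}} R^{i\oo{j}} = \mathcal{Q}$ after relabelling indices.

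For $\Delta |\Ric|^2$ I would repeat the normal-coordinate computation used to prove (\ref{Bochner_formula}), now with the Hermitian tensor $\Ric$ (satisfying $\oo{R_{i\oo{j}}} = R_{j\oo{i}}$) in place of the $(1,0)$-form $\oo{\nabla} v$: expanding $g^{i\oo{j}} \d_i \dbar_j(g^{p\oo{l}} g^{k\oo{q}} R_{p\oo{q}} R_{k\oo{l}})$ produces the curvature terms from $\d_i \dbar_j g^{-1}$ together with the second-derivative cross terms, yielding $\Delta|\Ric|^2 = 2\,\Real\ip{\Delta \Ric, \Ric} + |\nabla \Ric|^2 + |\oo{\nabla} \Ric|^2$; and $|\oo{\nabla} \Ric|^2 = |\nabla \Ric|^2$ by conjugation, so $\Delta|\Ric|^2 = 2\,\Real\ip{\Delta \Ric, \Ric} + 2|\nabla \Ric|^2$. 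Subtracting the two computations, the $\ip{\Delta \Ric, \Ric}$ terms cancel, the two occurrences of $\mathcal{Q}$ cancel, and we are left with $\parabolic{}|\Ric|^2 = 2|\Ric|^2 - 2|\nabla \Ric|^2 + 2\,\Real\ip{\Rm, \oo{\Ric \otimes \Ric}}$, which is (\ref{Ricci_norm_evolution}).

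The main obstacle is the static identity for $\Delta R_{i\oo{j}}$ in the second step: one must track, with correct signs and coefficients, the Riemann curvature picked up when a mixed pair of covariant derivatives is commuted past the two indices of $R_{k\oo{j}}$ (entirely parallel to, but a little longer than, the computation in Lemma \ref{commutation_relations}). This is precisely what forces the Ricci-only cubic terms $\mathcal{Q}$ arising from $\d_t g^{-1}$ to cancel, leaving exactly the full curvature contraction $\ip{\Rm, \oo{\Ric \otimes \Ric}}$ rather than a Ricci-cubic remainder; getting this cancellation — and the factor $2$ in $2|\Ric|^2$ from the normalization — right is the delicate part.
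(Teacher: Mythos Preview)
Your approach is correct and is essentially the same as the paper's: compute $\d_t|\Ric|^2$ and $\Delta|\Ric|^2$ separately in normal coordinates and subtract. The paper records
\[
\d_t|\Ric|^2 = 2|\Ric|^2 + 2\Real\ip{\Ric,\oo{\nabla}\nabla R} + 2R_{i\oo{j}}R_{j\oo{k}}R_{k\oo{i}},\qquad
\Delta|\Ric|^2 = 2|\nabla\Ric|^2 + 2\Real\ip{\Ric,\oo{\nabla}\nabla R} - 2\Real\ip{\Rm,\oo{\Ric\otimes\Ric}} + 2R_{i\oo{j}}R_{j\oo{k}}R_{k\oo{i}},
\]
and subtracts. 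Your only organizational difference is that you isolate the static identity $\Delta R_{i\oo{j}} = \nabla_i\nabla_{\oo{j}}R - R_{i\oo{j}k\oo{l}}R^{l\oo{k}} + R_{i\oo{k}}R^{k}{}_{\oo{j}}$ as a separate step and insert it into the $\d_t$ side, whereas the paper has this commutation built directly into its expression for $\Delta|\Ric|^2$; substituting your identity into your covariant formula $\Delta|\Ric|^2 = 2\Real\ip{\Delta\Ric,\Ric} + 2|\nabla\Ric|^2$ reproduces the paper's $\Delta|\Ric|^2$ line verbatim, including the cubic term $2\mathcal{Q}$ and the $\Rm$ contraction.
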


\begin{proof}
Using normal coordinates, one calculates
\begin{align*}
    \d_t |\Ric|^2 = 2|\Ric|^2 + 2\Real \ip{\Ric, \oo{\nabla} \nabla R} + 2R_{i\oo{j}}R_{j\oo{k}}R_{k\oo{i}},
\end{align*}
and
\begin{align*}
    \Delta |\Ric|^2 = 2|\nabla \Ric|^2 + 2\Real \ip{\Ric, \oo{\nabla} \nabla R} - 2\Real \ip{\Rm, \oo{\Ric \otimes \Ric}} + 2R_{i\oo{j}} R_{j\oo{k}} R_{k\oo{i}}.
\end{align*}
\end{proof}

\end{document}